\newtheorem{theorem}{Theorem}[section]
\newtheorem{lemma}{Lemma}[section]
\newtheorem{remark}{Remark}[section]
\newtheorem{example}{Example}[section]
\newtheorem{assumption}{Assumption}[section]
\numberwithin{equation}{section}
\newcommand{\dd}{\,{\rm d}}
\newcommand{\ifft}{\,{\rm ifft}}
\begin{document}
\begin{CJK}{UTF8}{gkai}
\begin{frontmatter}
	\title{Exponential integrator Fourier Galerkin methods for semilinear parabolic equations}
	\author[sjtu]{Jianguo Huang\fnref{hjgfootnote}}
	\ead{jghuang@sjtu.edu.cn}
	\address[sjtu]{School of Mathematical Sciences, and MOE-LSC, Shanghai Jiao Tong University, Shanghai 200240, China}
	\author[psu]{Yuejin Xu\fnref{xyjfootnote}}
	\ead{ymx5204@psu.edu}
	\address[psu]{Department of mathematics, the Pennsylvania State University, PA 16802, USA}
	\begin{abstract}
		In this paper, in order to improve the spatial accuracy, the exponential integrator Fourier Galerkin method (EIFG) is proposed for solving semilinear parabolic equations in rectangular domains. In this proposed method, the spatial discretization is first carried out by the Fourier-based Galerkin approximation, and then the time integration of the resulting semi-discrete system is approximated by the explicit exponential Runge-Kutta approach, which leads to the fully-discrete numerical solution. With certain regularity assumptions on the model problem, error estimate measured in $H^2$-norm is explicitly derived for EIFG method with two RK stages. Several two and three dimensional examples are shown to demonstrate the excellent performance of EIFG method, which are coincident to the theoretical results. 
	\end{abstract}
	\begin{keyword}
		Semilinear parabolic equations, exponential integrator method, Fourier Galerkin method, Runge-Kutta, error estimate
	\end{keyword}		
\end{frontmatter}	
	
\section{Introduction}

The spectral method has become increasing popular in numerical solutions of partial differential equations due to its high-order accuracy, see \cite{CialetLions1997, CanutoHussaini1988, Guo1998, ShenTang2011, CanutoHussaini2006, GottliebHussaini1984, ShenTang2006, Shen1994, HesthavenGottlieb2007}. In the earlier work, the spatial variables are discretized by the spectral method while the temporal variables are usually applied with finite difference scheme for time-dependent partial differential equations. For instance, for phase field equations \cite{ChenShen1998}, linearized Naiver-Stokes equations \cite{GervasioSaleri1998} and hyperbolic equations \cite{Tal1986}, temporal variables are discretized by semi-implicit scheme. Semi-implicit schemes allow much larger time step sizes than explicit schemes \cite{ChenShen1998} while maintaining higher accuracy without increasing the computational work and memory space \cite{Tal1986}. Moreover, the efficiency and stability of the numerical schemes can be improved by some stabilization techniques \cite{GervasioSaleri1998}, which enable us to develop better solvers for stiff problems. However, when the exact solution is sufficiently smooth, the accuracy of the numrical solution would be limited by the accuracy of finite difference scheme in time. In recent years, the spectral method and finite element method in time has been applied to temporal discretization. As for the finite element temporal discretization, Shen Jie and Wang Lilian have proposed a new space-time spectral method based on a Legendre-Galerkin method in space and a dual-Petrov-Galerkin formulation in time \cite{ShenWang2007}. The method is of great efficiency due to the Fourier-like basis functions are orthogonal with respect to $L^2$ and $H^1$-norms. Moreover, the method combined with single or multi-interval Legendre Petrov-Galerkin method in time is presented in \cite{TangMa2006} and the corresponding optimal error estimate in $L^2$-norm is derived. The h-p version of the finite element method for both time and space variables is also addressed in \cite{BabuvskaJanik1989} with eigenvalue decomposition. As for the spectral temporal discretization, Hillel Tal-Ezer \cite{Tal1989} has presented a pseudospectral explicit scheme for solving linear, periodic problems, which has infinite accuracy both in time and in space. For solving nonlinear problems, temporal single-interval and multi-interval Legendre-Gauss-Radau collocation method is developed in \cite{WangTong2024}. In the past few years, many scholars have extensively explored the application of spectral method and applied it to various problems, such as singular eigenvalue problems \cite{MaLi2018}, fractional Laplacian problems \cite{LinAza2018, HouXu2017, ShengCao2021, TangWang2020, ZakyHendy2020, AlzahraniKhaliq2019}, Allen-Cahn problems \cite{JiaZhang2021}, high-order problems \cite{ZhangYu2021}, mixed inhomogeneous boundary value problems \cite{YuGuo2014}, quadrilateral domains \cite{YuGuo2016}, triangle domains \cite{SamsonLi2013}, and so on.

Furthermore, based on the Jacobi interpolation approximations \cite{GuoWang2004}, Jacobi-weighted Sobolev spaces have been constructed and the spectral method can also be applied to equations with Dirichlet boundary condition \cite{Guo1998,GuoShen2006,GuoWang2001}. The error analysis for mapped Jacobi spectral methods and Fourier-Legendre and Fourier-Hermite spectral-Galerkin methods has been derived in \cite{WangShen2005,ZhangWang2023}. Besides Legendre interpolations, spatial approximation is always realized by Chebyshev \cite{Shen1995, PetterPeter1997} or Hermite interpolations \cite{AguirreRivas2005, MaSun2005, MaoShen2017}. Meanwhile, Legendre-based and Chebyshev-based spectral method can also be applied to hyperbolic equations \cite{ShenWang2007b}.

Exponential time differencing (ETD) method \cite{CoxMatthews2002, BorislavWill2005} is an exponential integrator-based method and has been widely used in engineering and scientific computing due to its great efficiency and stability in handling stiff semilinear systems. ETD method first approximates the nonlinear part using polynomial interpolation and then performs exact integration on the resulting integrands. The evaluations of the products of matrix exponentials and vectors are usually implemented with Krylov subspace method \cite{DuJu2021}, and some researches in \cite{DuJu2019,ZhuJu2016,JuZhang2015} have tried to imporve the efficiency with various techniques. Also, the stability property in $L^2$ and $L^{\infty}$ norms have been established mainly with the help of maximum bound principle \cite{DuZhu2004}. Moreover, a series of consistency and order conditions were systematically developed for exponential Runge-Kutta method based on the theory of semigroup \cite{Pope1963,HochbruckOstermann2005b,HochbruckOstermann2005a,HochbruckOstermann2010,LUAN2014}. Additionally, the stability has been enhanced in \cite{DuZhu2004,KassamTrefethen2005,MohebbiDehghan2010} and the application areas have been broadened in \cite{HochbruckOstermann2008,HuangJu2019b,HuangJu2019a,RoSc2021,WhalenBrio2015}.

In this paper, we propose an efficient exponential integrator Fourier Galerkin method (EIFG) to solve the semilinear parabolic equation with periodic boundary conditions,
\begin{equation}
\label{eq1-1}
\begin{split}
u_t = \mathcal{D}\Delta u +f(t,u,\nabla u), \quad \bm x \in \Omega, \quad 0\leq t \leq T,
\end{split}
\end{equation}
where $\Omega$ is an open rectangular domain in $\mathbb{R}^d$ $(d \geq 1)$, $T > 0$ is the duration time, $\mathcal D>0$ is the diffusion coefficient, $u(t,\bm x)$ is the unknown function and $f(t,u,\nabla u)$ is the reaction term. EIFG method first discretes the spatial variables with Fourier-based Galerkin to obtain a semi-discrete (in space) system. Then it applies the explicit Runge-Kutta approach to approximate the time integration and achieves the fully-discrete scheme. The error estimate derived in $H^2$-norm is rigorous with hidden constants are independent of the spatial mesh size and temporal step size. In one aspect, the spatial accuracy can be obtained by discussing the accuracy of Fourier approximation, which is mainly depend on the regularity of exact solution. In another aspect, the temporal accuracy can be estimated by following the similar arguments in \cite{HochbruckOstermann2005b,HuangJu2023} since the fully-discrete system can be regarded as a finite dimensional evolution equation. To the best of our knowledge, the work presented in this paper is the first study on combining exponential integrator in time and Fourier Galerkin method in space, which provides explicit fully-discrete error estimates for general parabolic equations and improves the spatial accuracy significantly in \cite{HuangJu2023}.

The rest of the paper is organized as follows. The EIFG method is first proposed in Section \ref{algorithm-description}, and the fully-discrete error analysis is given in Section \ref{theory}. In Section \ref{numerical}, various numerical experiments are carried out to validate the theoretical results and demonstrate the excellent performance of the EIFG method.  Finally, some including remarks are drawn in Section \ref{conclusion}.

\section{The exponential integrator Fourier Galerkin method}\label{algorithm-description}

In this section, we first develop the exponential integrator Fourier Galerkin method (abbreviated as EIFG) for solving \eqref{eq1-1} with periodic boundary conditions. First of all, some standard notations are proposed for later provement. For a given bounded Lipschitz domain $\Omega \subset \mathbb{R}^d$ and nonnegative integer $s$, denote $H^s(\Omega)$ as the standard Sobolev spaces on domain $\Omega$ with norm $\|\cdot\|_{s,\Omega}$ and semi-norm $|\cdot|_{s,\Omega}$, and the corresponding $L^2$-inner product is $(\cdot,\cdot)_{\Omega}$. Also, we denote $H_p^s(\Omega)$ as the subspace of $H^s(\Omega)$, which consists of functions with derivatives of order up to $s-1$ being periodic. The corresponding norm of space $H_p^s(\Omega)$ is $\|\cdot\|_{s,\Omega}$ and $\|v\|_{k,\infty,\Omega}={\rm ess}\sup_{|\bm \alpha|\leq k}\|D^{\bm \alpha}v\|_{L^{\infty}(\Omega)}$ for any function $v$ such that the right-hand side term makes sense, where $\bm \alpha=(\alpha_1, \cdots, \alpha_d)$ is a multi-index and $|\bm \alpha|=\alpha_1+\cdots+\alpha_d$. Generally speaking, we omit the subscript for simplicity if there is no confusion. For a non-negative integer $\ell$, the set of all polynomials on $\Omega$ with the total degree at most $\ell$ are denoted as $P_{\ell}(\Omega)$. Moreover, given two quantities $a$ and $b$, ¡°$a \lesssim b$¡± is the abbreviation of ¡°$a \leq Cb$¡±, where the hidden constant $C$ is positive and independent of the mesh size; $a\eqsim b$ is equivalent to $a\lesssim b \lesssim a$.

Consider the rectangular domain $\Omega \in \mathbb{R}^d$ and assume $\bar{\Omega} := \prod\limits_{i=1}^d [a_i,b_i]$. Then we focus on the semilinear parabolic equation with periodic boundary condition and an initial configuration $u_0 \in H_p^m(\Omega)$, that is
\begin{equation}
\label{eq3-1}
\left\{\begin{split}
&u_t = \mathcal{D}\Delta u +f(t, u,\nabla u), \quad \bm x \in \Omega, \quad 0 \leq t \leq T,\\
&u(0, \bm x)=u_0(\bm x), \quad \bm x \in \Omega,\\
&u(t, \bm x)|_{x_i=a_i}= u(t,\bm x)|_{x_i=b_i}, \quad 1\leq i\leq d, \quad 0 \leq t \leq T,
\end{split}
\right.
\end{equation}
where $T \geq 0$ is the terminal time. For convenience, we always assume that $\mathcal{D}=1$.

\subsection{Semi-discretization in space by Fourier Galerkin approximation}

Given a positive even number $N_j$, we construct a one-dimensional Fourier approximation space for $[a_j,b_j]$ as
\begin{equation*}
X_{N_j}^j(a_j,b_j)={\rm span}\Big\{ \phi_j^k(x_j)\Big|-\frac{N_j}{2}\leq k \leq \frac{N_j}{2}-1 \Big\},
\end{equation*}
where $\phi_j^k(x_j)=e^{\rm ik\frac{b_j-a_j}{2\pi}x_j}$ is the $k$-th basis function of $X_{N_j}^j(a_j,b_j)$ and $\rm{i}=\sqrt{-1}$ is the imaginary number. By using the tensor product of all above Fourier approximation spaces, we can obtain the Fourier approximation space for $\Omega$ as follows:
\begin{equation}
\label{tensor_finite}
\begin{split}
X_N&:= X_{N_1}^1(a_1, b_1)\otimes \cdots \otimes X_{N_d}^d(a_d,b_d)\\
&= {\rm span} \Big\{\phi_1^{k_1}(x_1)\cdots \phi_d^{k_d}(x_d)\Big| -\frac{N_1}{2}\leq k_1 \leq \frac{N_1}{2}-1, \cdots, -\frac{N_d}{2}\leq k_d \leq \frac{N_d}{2}-1 \Big\}.
\end{split}
\end{equation}

It's obvious that $X_N \subset H_p^m(\Omega)$. Define $N=\max_{1\leq n \leq d} N_n$ as the number of spatial meshes of the corresponding uniformly rectangular partition $\mathcal{T}_N$ for generating $X_N$. For the forthcoming error analysis, we assume the order of polynomials in each direction is of the same magnitude, i.e., $N \eqsim N_n$ for $\forall \ 1 \leq n \leq d$. For brevity, denote $\Lambda_N=[-\frac {N_1} 2,\frac{N_1}{2}-1]\times\cdots\times[-\frac {N_d}2,\frac{N_d}{2}-1]$, ${\bm k}=(k_1, \cdots, k_d)$ and $\phi_{{\bm k}}(\bm x)=\phi_{k_1}^1(x_1)\cdots\phi_{k_d}^d(x_d)$, where $\bm k \in \Lambda_N$. Then the approximate function $u_N(\bm x)$ for $u(\bm x)$ can be regarded as the truncated Fourier series
\begin{equation}
\label{eigenvalues}
u_N(\bm x)=\sum\limits_{ {\bm k}\in \Lambda_N}\widehat{u}_{N,  {\bm k}}\phi_{ {\bm k}}(\bm x).
\end{equation}

Then by using the Fourier Galerkin formulation, the Fourier approximation in space for \eqref{eq3-1} is to find $u_N\in L^2(0,T;X_N)$ such that
\begin{equation}
\label{eq3-11}
\left\{\begin{split}
&(u_{N,t},v_N) + a(u_N,v_N) = (f(t,u_N,\nabla u_N),v_N), \quad \forall v_N \in X_N, 0 \leq t \leq T,\\
&u_N(0)=P_Nu_0,
\end{split}
\right.
\end{equation}
where the bilinear operator $a(\cdot, \cdot)$ is defined by
\begin{equation}\label{bilinear}
a(w_N,v_N)=\int_{\Omega}\mathcal{D}\nabla w_N \cdot \nabla v_N \dd \bm x, \quad \forall w_N, v_N \in X_N,
\end{equation}
and $P_N:L^2(\Omega) \to X_N$ is the ${L}^2$-orthogonal projection operator and fulfills the property
\begin{equation}
\label{P_N_property}
(P_Nv-v,w)=0, \quad \forall \ w \in X_N.
\end{equation}
It's easy to show, using the similar arguments in Theorem 2.1 in \cite{ShenTang2011} that $P_N$ is stable with respect to $L^2$-norm and $H^1$-norm, i.e., $\|P_Nu\|_0\lesssim \|u\|_0$ and $\|P_Nu\|_1\lesssim \|u\|_1$ for any $u \in H_p^1(\Omega)$. For convenience, we map the interval $[a_i,b_i]$ to $[0,2\pi]$ through the coordinate transform in each direction:
\begin{equation*}
y_i=\frac{x_i-a_i}{b_i-a_i}2\pi, \quad i = 1, \cdots, d.
\end{equation*}

Denote by $\widehat u_N$ the vector of the expansion coefficients of \eqref{eigenvalues}. Let $\widetilde k_i = \frac{2\pi k_i}{b_i-a_i}, i=1, \cdots, d$ and $\widetilde{\bm k}=(\widetilde{k_1}, \cdots, \widetilde{k_d})$, where $k_i$ is the element of $\bm k$. Due to the orthogonality property of basis functions in $X_N$, for any $\bm k \in \Lambda_N$, we can derive a set of ODEs for $\widehat{u}_N$,
\begin{equation}
\label{semi-discretization}
\left\{
\begin{split}
&(\widehat{u}_{N,\bm k})_t+|\widetilde{\bm k}|^2\widehat{u}_{N,\bm k}=\Big(\widehat P_N\widehat f(t, u_N,\nabla u_N)\Big)_{\bm k}, \quad \bm x \in \Omega, \\
&\widehat u_{N, \bm k}(0)=\Big(\widehat P_N \widehat {u_0}\Big)_{\bm k}, \quad \bm x \in \Omega,
\end{split}
\right.
\end{equation}
where $\widehat P_N$ is the truncation operator on the frequency space $X_N$ and satisfies that $\widehat P_N\widehat u = \{\widehat u \}_{\bm k \in \Lambda_N}$. For convience in the forthcoming analysis, we abbreviate $\widetilde{\bm k}$ as $\bm k$. It's obvious that for any $u\in H_p^1(\Omega)$, $\|\widehat P_N\widehat u\|_{\ell^2}\lesssim \|\widehat u\|_{\ell^2}$, where $\|\cdot\|_{\ell^2}$ is the $\ell^2$-norm for the tensor, i.e., for any tensor $\bm a=\{a_{\bm k}\}_{\bm k \in \Lambda_N}$, we have
\begin{equation*}
\|\bm a\|_{\ell^2}^2=\frac{1}{|\Lambda_N|}\sum\limits_{\bm k \in \Lambda_N}|a_{\bm k}|^2.
\end{equation*}

\subsection{Semilinear explicit exponential integrator in time}

The time interval $[0,T]$ is divided into $N_T>0$ subintervals $[t_n,t_{n+1}], n=0, \cdots, N_T-1$ with $\tau_n=t_{n+1}-t_n>0$ being the time step size at $t_n$. Denote $\{e^{-tL_N} \}_{t\geq 0}$ as the semigroup on $X_N$ with the infinitesimal generator $(-L_N)$. For brevity, we define $u(t):= u(t,\cdot),u_N(t):=u_N(t,\cdot)$ and $g(t,u(t)):=f(t,u(t),\nabla u(t))$. By the Duhamel formula, the semi-discrete solution $\widehat{u}_{N,\bm k}$ of the problem \eqref{semi-discretization} can be equivalently expressed as
\begin{equation}
\label{Duhamel}
\begin{split}
\widehat u_{N, \bm k}(t_{n+1}) =& e^{-\tau_n|\bm k|^2}\widehat u_{N,\bm k}(t_n) + \int_0^{\tau_n} e^{-(\tau_n-\sigma)|\bm k|^2}\Big(\widehat P_N\widehat g(t_n+\sigma,u_N(t_n+\sigma))\Big)_{\bm k}\dd \sigma,
\end{split}
\end{equation}

It's worthy to note that the first term at the right side of the formula \eqref{Duhamel} implies an operator $\widehat L_N$ in the frequency space, which is exactly the multiplier $\{|\bm k|^2 \}_{\bm k\in \Lambda_N}$, that is
\begin{equation*}
(\widehat L_N\widehat u_N)_{\bm k}=|\bm k|^2\widehat u_{N,\bm k},\quad \forall \ \bm k \in \Lambda_N.
\end{equation*}

Denote $L_N:X_N\to X_N$ as the induction operator of $\widehat L_N$. Then due to the definition of bilinear operator $a(\cdot,\cdot)$ in \eqref{bilinear}, we can derive that
\begin{equation}
\label{L_N}
a(w_N,v_N)=(L_Nw_N,v_N),\quad \forall \ w_N,v_N\in X_N.
\end{equation}

By the Rayleigh representation theorem and Parseval identity, we can easily derive that the eigenvalues of $L_N$ satisfy that
\begin{equation}
\label{eigenvalue2}
0\leq \lambda(L_N)\lesssim \Big(\frac N 2 \Big)^2.
\end{equation}

Denote $\widehat u_N^n$ as the fully-discrete numerical solution at time $t_n$ and $u_N^n$ as the inverse Fourier transformation result of $\widehat u_N^n$, i.e., $u_N^n=\text{ifft}(\widehat u_N^n)$. Applying the consistent explicit exponential Runge-Kutta method \cite{HochbruckOstermann2010} to approximating the integral of semi-discrete solution, then we propose the following EIFG method, that is for $n=0, \cdots, N_T-1$, $i=1,\cdots, s$,
\begin{subequations}
	\label{explicit_ETD1}
	\begin{align}
	\widehat u_{N,\bm k}^{n+1} &= e^{-\tau_n |\bm k|^2}\widehat u_{N,\bm k}^n + \tau_n \textstyle\sum\limits_{i=1}^s b_i(-\tau_n|\bm k|^2)(\widehat G_{ni})_{\bm k}, \label{nonlinear1}\\
	(\widehat U_{n,i})_{\bm k} &= e^{-c_i\tau_n|\bm k|^2}\widehat u_{N,\bm k}^n + \tau_n \textstyle\sum\limits_{j=1}^{i-1} a_{ij}(-\tau_n|\bm k|^2)(\widehat G_{nj})_{\bm k}, \label{intermediate11}\\
	(\widehat G_{nj})_{\bm k} &=\Big(\widehat P_N\widehat g(t_n+c_j\tau_n, U_{n,j})\Big)_{\bm k}, \label{intermediate12}
	\end{align}
\end{subequations}
where $U_{ni}=\text{ifft}(\widehat U_{ni})$, the interpolation nodes $c_1, \cdots, c_s$ are $s$ different nodes selected in $[0, 1]$ and the weights are denoted as:
\begin{equation}
\label{eq3-12}\left\{
\begin{split}
b_i(-\tau_n|\bm k|^2) &= \int_0^1 e^{-\tau_n(1-\theta)|\bm k|^2}l_i(\theta)\dd \theta,\quad i=1, \cdots, s\\
a_{ij}(-\tau_n|\bm k|^2) &= \frac{1}{\tau_n}\int_0^{c_i\tau_n}e^{-(c_i\tau_n-\tau)|\bm k|^2}l_j(\tau)\dd \tau, \quad i=1, \cdots, s, j=1, \cdots, i-1,
\end{split}
\right.
\end{equation}
where $\{l_i(\theta)\}_{i=1}^s$ are the Lagrange interpolation polynomials
\begin{equation*}
l_i(\theta) := \prod\limits_{m=1,m\neq i}^s \frac{\theta - c_m}{c_i-c_m}, \quad i = 1, \cdots s.
\end{equation*}

The above fully-discrete scheme \eqref{explicit_ETD1} is referred as the exponential integrator Fourier Galerkin (EIFG) method. Moreover, the first-order Euler exponential scheme (i.e., the number of RK stages $s=1$) is formulated as
\begin{equation}
\label{EIFE1}
\widehat u_{N,\bm k}^{n+1} = e^{-\tau_n|\bm k|^2}\widehat u_{N,\bm k}^n+\tau_n\varphi_1(-\tau_n|\bm k|^2)\Big(\widehat P_N\widehat g(t_n,u_N^n)\Big)_{\bm k},
\end{equation}
which is abbreviated as EIFG1. If the number of RK stages $s=2$, then the two interpolation nodes are taken as $c_1=0$ and $c_2\in (0,1]$ and the two-stage second-order exponential Runge-Kutta scheme is expressed as
\begin{equation}
\label{EIFE2}
\begin{split}
\widehat u_{N,\bm k}^{n+\frac 1 2} &= e^{-\tau_n|\bm k|^2}\widehat u_{N,\bm k}^n+c_2\tau_n\varphi_1(-c_2\tau_n|\bm k|^2)\Big(\widehat P_N\widehat g(t_n,u_N^n) \Big)_{\bm k},\\
\widehat u_{N,\bm k}^{n+1}&= e^{-\tau_n|\bm k|^2}\widehat u_{N,\bm k}^n +\tau_n\Big(\Big(\varphi_1(-\tau_n|\bm k|^2)-\frac{1}{c_2}\varphi_2(-\tau_n|\bm k|^2)\Big)\Big(\widehat P_N\widehat g(t_n,u_N^n)\Big)_{\bm k}\\
&+\frac{1}{c_2}\varphi_2(-\tau_n|\bm k|^2)\Big(\widehat P_N\widehat g(t_n+c_2\tau_n,u_{N,\bm k}^{n+\frac 1 2})\Big)_{\bm k}\Big),
\end{split}
\end{equation}
which is abbreviated as EIFG2.

Fully-discrete error analysis of \eqref{EIFE1} and \eqref{EIFE2} will be carefully studied in Section \ref{theory}. For higher order ($\geq 3$) explicit exponential Runge-Kutta schemes, more complicated order conditions are needed and interested reader can refer to \cite{HochbruckOstermann2005b} for details.

\section{Convergence analysis for exponential integrator Fourier Galerkin method}\label{theory}

In order to illustrate the convergence of EIFG method, we first reformulate \eqref{explicit_ETD1} in the operator form:
\begin{subequations}
	\label{explicit_ETD}
	\begin{align}
	\widehat u_{N}^{n+1} &= e^{-\tau_n \widehat L_N}\widehat u_{N}^n + \tau_n \textstyle\sum\limits_{i=1}^s b_i(-\tau_n\widehat L_N)\widehat G_{ni}, \label{nonlinear}\\
	\widehat U_{n,i} &= e^{-c_i\tau_n\widehat L_N}\widehat u_{N}^n + \tau_n \textstyle\sum\limits_{j=1}^{i-1} a_{ij}(-\tau_n\widehat L_N)\widehat G_{nj}, \quad i=1, \cdots, s\label{intermediate1}\\
	\widehat G_{nj} &=\widehat P_N\widehat g(t_n+c_j\tau_n, U_{n,j}), \quad i=1, \cdots. s.\label{intermediate2}
	\end{align}
\end{subequations}
According to the definition of $\widehat L_N$, we can derive that the eigenvalues of $\widehat L_N$ satisfy
\begin{equation}
\label{eigenvalue}
0 \leq \lambda(\widehat L_N)\lesssim \Big(\frac N 2\Big)^2.
\end{equation}

We will first give some preparation assumptions and lemmas for later analysis, which have been derived in \cite{HuangJu2023}. Then by following the similar arguments of \cite{HochbruckOstermann2005b,ThomeeVidar2006,LiuSun2016}, we will derive the semi-discrete error and fully-discrete error respectively for semilinear parabolic equation \eqref{eq3-1} equipped with periodic boundary condition. Moreover, we always assume $d \leq 3$ from now on.

In addition, since the operator $L_N$ is symmetric and positive definite, according to the Parseval indentity, we can establish an important relation between norms $\|\cdot\|_m$ (m=0, 1, 2) and the norm $\|\cdot\|_{\ell^2}$,
\begin{equation}
\label{equal_norm}
\|v\|_1 \eqsim \|L_N^{\frac 1 2}v\|_0\eqsim \|\widehat L_N^{\frac 1 2}\widehat v\|_{\ell^2}, \quad \|v\|_2 \eqsim \|L_Nv\|_0\eqsim \|\widehat L_N\widehat v\|_{\ell^2}, \quad \forall v \in X_N.
\end{equation}

\subsection{Some preliminary lemmas}

We first present some fundamental estimates for the semigroup $\left\{e^{-t L_N} \right\}_{t \geq 0}$, which are important for the error analysis of EIFG method and will be used frequently later on. 

\begin{lemma}\cite{HuangJu2023}
	\label{lemma2}
	\begin{itemize}
		\item[\rm (i)] For any given parameter $\gamma\geq 0$, it holds
		\begin{equation}\label{lemma2-1}
		\|e^{-t L_N}\|_0+\|t^{\gamma}L_N^{\gamma}e^{-t L_N}\|_0 \lesssim 1, \quad \forall\  t>0,\forall\  N>0.
		\end{equation}
		\item[\rm (ii)] For any given parameter $0 \leq \gamma \leq 1$, it holds
		\begin{equation}
		\label{lemma2-2}
		\Big\|tL_N^{\gamma}\textstyle\sum\limits_{j=1}^{n-1}e^{-jtL_N}\Big\|_0 \lesssim 1, \quad \forall\  t>0, \forall\  N > 0.
		\end{equation}
		\item[\rm (iii)] For any given parameter $0 \leq \gamma \leq 1$, it holds
		\begin{equation}\label{lemma2-3}
		\|\phi(-t L_N)\|_0+\|t^{\gamma}L_N^{\gamma}\phi(-t L_N)\|_0 \lesssim 1, \quad \forall\  t>0, \forall\  N>0,
		\end{equation}
		where $\phi(-t L_N)=b_i(-t L_N)$ or $\phi(-t L_N)=a_{ij}(-t L_N), i, j=1, \cdots, s$.
		
	\end{itemize}
\end{lemma}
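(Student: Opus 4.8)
The plan is to reduce every operator estimate to a one-dimensional scalar estimate, which is the single observation that drives the whole lemma. Since $L_N$ is symmetric positive semidefinite with the Fourier modes $\{\phi_{\bm k}\}_{\bm k\in\Lambda_N}$ as an $L^2$-orthogonal eigenbasis and eigenvalues $\{|\widetilde{\bm k}|^2\}$ lying in $[0,(N/2)^2]$ by \eqref{eigenvalue2}, the functional calculus together with the Parseval identity \eqref{equal_norm} gives, for any bounded $\psi$ on $[0,\infty)$, $\|\psi(L_N)\|_0=\max_{\bm k\in\Lambda_N}|\psi(|\widetilde{\bm k}|^2)|\le\sup_{\lambda\ge0}|\psi(\lambda)|$. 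Hence each of \eqref{lemma2-1}, \eqref{lemma2-2}, \eqref{lemma2-3} collapses to a uniform bound on an explicit scalar function of $\lambda\ge0$, and the resulting constants are automatically independent of $N$.

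The two easy parts then follow at once. For (i) I take $\psi(\lambda)=e^{-t\lambda}$, bounded by $1$, and $\psi(\lambda)=(t\lambda)^\gamma e^{-t\lambda}$, whose supremum is $\sup_{x\ge0}x^\gamma e^{-x}=(\gamma/e)^\gamma$, a constant depending only on $\gamma$. For (iii) I would first bound $|b_i(-t\lambda)|\le\int_0^1 e^{-t\lambda(1-\theta)}|l_i(\theta)|\dd\theta\le\|l_i\|_{L^1(0,1)}$ via $e^{-t\lambda(1-\theta)}\le1$, and treat $a_{ij}$ analogously from \eqref{eq3-12}. For the weighted term with $0\le\gamma<1$ I would write $(t\lambda)^\gamma e^{-t\lambda(1-\theta)}=(1-\theta)^{-\gamma}\big((1-\theta)t\lambda\big)^\gamma e^{-(1-\theta)t\lambda}\le(\gamma/e)^\gamma(1-\theta)^{-\gamma}$, leaving the convergent integral $\int_0^1(1-\theta)^{-\gamma}|l_i(\theta)|\dd\theta$; the borderline $\gamma=1$ is then handled separately by integrating by parts in $\theta$ to move the derivative onto the polynomial $l_i$, which removes the otherwise non-integrable $(1-\theta)^{-1}$ factor.

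The delicate part is (ii), where a naive term-by-term bound produces a spurious factor $t^{1-\gamma}$. My plan is to compare the sum with an integral: since $e^{-s\lambda}$ is decreasing, $t\sum_{j=1}^{n-1}e^{-jt\lambda}\le\int_0^{(n-1)t}e^{-s\lambda}\dd s=\lambda^{-1}\big(1-e^{-(n-1)t\lambda}\big)$, so the scalar quantity is controlled by $\lambda^{\gamma-1}\big(1-e^{-(n-1)t\lambda}\big)$. Setting $z=(n-1)t\lambda$ and invoking the time-horizon constraint $(n-1)t\le T$ inherent in the time stepping, this becomes $\big((n-1)t\big)^{1-\gamma}(1-e^{-z})/z^{1-\gamma}\le T^{1-\gamma}\sup_{z\ge0}(1-e^{-z})/z^{1-\gamma}$, and the last supremum is finite for every $0\le\gamma\le1$ since the ratio vanishes at both ends. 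I expect this absorption of the $t^{1-\gamma}$ factor against $1-e^{-(n-1)t\lambda}$, using $(n-1)t\lesssim T$, to be the crux: without the horizon bound the estimate would genuinely fail for $\gamma<1$, and the constant depends on $T$ but not on $N$ or $t$. As the statement is quoted from \cite{HuangJu2023}, I would finally just record these three scalar computations and refer there for the remaining routine details.
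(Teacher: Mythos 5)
Your proof is correct and follows essentially the same route as the paper: both reduce every operator norm to a scalar bound over the spectrum of the symmetric operator $L_N$ and then invoke $\sup_{x\ge 0}x^{\gamma}e^{-x}=\gamma^{\gamma}e^{-\gamma}$, which is exactly the paper's argument for part (i). For parts (ii) and (iii) the paper simply defers to the cited reference, whereas you supply the standard details --- the sum-to-integral comparison with the time-horizon constraint $(n-1)t\le T$ for (ii), and the $(1-\theta)^{-\gamma}$ splitting (with integration by parts at $\gamma=1$) for (iii) --- all of which are sound.
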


\begin{proof}
	Due to the orthogonality property of basis functions in $X_N$, we can easily know $L_N$ is a linear symmetric operator on $X_N$ and its eigenvalues satisfy condition \eqref{eigenvalue}. Therefore, we obtain
	\begin{equation}
	\label{semigroup-1}
	\|e^{-\tau L_N }\|_{0}\lesssim 1.
	\end{equation}
	Similar as the derivation of Lemma 3.1 in \cite{HuangJu2023}, $L_N$ is a symmetric operator on $X_N$, so is $\tau^{\gamma}L_N^{\gamma}e^{-\tau L_N}$. Thus it holds
	\begin{equation}
	\label{semigroup-2}
	\|\tau ^{\gamma}L_N^{\gamma}e^{-\tau L_N}\|_0=\max_{\lambda \in \lambda(L_N)} |(\tau \lambda)^{\gamma}e^{-\tau\lambda}|.
	\end{equation}	
	Let us consider an auxiliary function
	$
	g(x)=x^{\gamma}e^{-x}$ for $x\ge 0$.
	The derivative with respect to $x$ is $g^{\prime}(x)=x^{\gamma-1}e^{-x}(\gamma-x)$, so the maximum of $g(x)$ is taken at $x=\gamma$, which implies
	\begin{equation}\label{ttt}
	g(x)\le \gamma^{\gamma}e^{-\gamma},\quad  \forall\,x\ge 0.
	\end{equation}
	Combination of \eqref{ttt} with \eqref{semigroup-2}  immediately gives us
	$
	\|\tau ^{\gamma}L_N^{\gamma}e^{-\tau L_N}\|_0\le \gamma^{\gamma}e^{-\gamma},
	$
	which together with  \eqref{semigroup-1} then directly deduces  \eqref{lemma2-1}.
	
	Following the similar arguments of Lemma 3.1 in \cite{HuangJu2023}, \eqref{lemma2-2} and \eqref{lemma2-3} can be easily derived by using the eigenvalue estimate \eqref{eigenvalue} of $L_N$. Therefore, we will omit the details of provement.
	
\end{proof}

Similar to \cite{ThomeeVidar2006}, we introduce the mild growth condition for the function $f$ and some regularity conditions required for the exact solution $u(t)$ in order to carry out convergence and error analysis of EIFG method.

\begin{assumption}
	\label{assumption4}
	The function $f(t,\xi,\eta)$ and $D^{\bm{\alpha}} f(t,\xi,\eta)$ grow mildly with respect to $\xi$ and $\eta$ for any $|\bm{\alpha}|=1$, i.e., there exists a number $p>0$ for $d=1,2$ or $p \in (0,2]$ for $d=3$ such that
	\begin{subequations}
		\begin{align}
		\label{mild_growth1} \Big|\frac{\partial f}{\partial \xi}(t,\xi,\eta) \Big| \lesssim 1+|\xi|^p, &\quad \Big|\frac{\partial f}{\partial \eta}(t,\xi,\eta)\Big|\lesssim 1+|\eta|^p,\quad \forall t, \xi, \eta \in \mathbb{R}.\\
		\label{mild_growth2} \Big|\frac{\partial D^{\bm \alpha} f}{\partial \xi}(t,\xi,\eta) \Big|\lesssim 1+|\xi|^p, &\quad \Big|\frac{\partial  D^{\bm \alpha} f}{\partial \eta}(t,\xi,\eta) \Big|\lesssim 1+|\eta|^p, \quad \forall t, \xi, \eta \in \mathbb{R}.
		\end{align}
	\end{subequations}
	
\end{assumption}

\begin{assumption}
	\label{assumption5}
	The function $f(t,\zeta,\nabla \zeta)$ is sufficiently smooth with respect to $t,\xi,\eta$, i.e., for any given constant $K_1, K_2>0$, it holds
	\begin{equation}
	\label{smooth_function}
	\textstyle\sum\limits_{|\bm {\alpha}| \leq 2}\Big|D^{\bm{\alpha}}f(t,\xi,\eta) \Big|\lesssim 1, \quad \forall t \in [0, T], \xi \in [-K_1, K_1], \eta \in [-K_2, K_2].
	\end{equation}
\end{assumption}

\begin{assumption}
	\label{assumption6}
	The exact solution $u(t)$ satisfies some of the following regularity conditions
	\begin{subequations}
		\begin{align}
		\label{regularity1} \sup\limits_{0\leq t \leq T}\|u(t)\|_{m,\Omega} &\lesssim 1,\\
		\label{regularity2} \sup\limits_{0\leq t \leq T}\|u_t(t)\|_{0,\infty, \Omega} &\lesssim 1,\\
		\label{regularity3} \sup\limits_{0\leq t \leq T}\|u_{tt}(t)\|_{0,\infty, \Omega} &\lesssim 1,
		\end{align}
	\end{subequations}
	where the hidden constants may depend on $T$ and the parameter $m$ has been defined in \eqref{eq3-1}.
\end{assumption}

Now, we will propose the relation between the function $f(t,u)$ and $\widehat{f}(t,u,\nabla u)$, which is very important for the forthcoming analysis.

\begin{lemma}
	\label{locally_Lipschitz}
	Suppose that the function $f$ satisfies Assumption \ref{assumption4}, and the exact solution $u(t)$ fulfills \eqref{regularity1} in Assumption \ref{assumption6} with $m \geq 2$. Then $\widehat f$ and $f$ are locally-Lipschitz continuous in a strip along the exact solution $u(t)$, i.e., for any given constant $R>0$,
	\begin{subequations}
	\begin{align}
		\|\widehat f(t,v,\nabla v)-\widehat f(t,w,\nabla w)\|_0 &\lesssim \|v-w\|_2,\label{locally_Lipschitz_eq}\\
		\|f(t,v,\nabla v) - f(t,w,\nabla w)\|_1 &\lesssim \|v-w\|_2,\label{locally_Lipschitz_eq2}
	\end{align}
	\end{subequations}
	for any $t \in [0, T]$ and $v, w\in H^m_p(\Omega)$ satisfying
	\begin{equation*}
	\label{neighbor}
		\max\{\|v-u(t)\|_2, \|w-u(t)\|_2 \}\leq R,
	\end{equation*}
	where the hidden constant in \eqref{locally_Lipschitz_eq} may depend on $R$.
\end{lemma}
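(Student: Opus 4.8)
The plan is to reduce both inequalities to pointwise growth bounds on the partial derivatives of $f$ and then to pay for the spatial integrability with Sobolev embeddings, using the restriction $d\le 3$ (and $p\le 2$ when $d=3$) to keep every H\"older exponent admissible. First I would record the a priori bounds that the strip condition supplies: since $m\ge 2$ and $d\le 3$, the embedding $H^2(\Omega)\hookrightarrow L^\infty(\Omega)$ together with $\|u(t)\|_2\lesssim 1$ and $\max\{\|v-u(t)\|_2,\|w-u(t)\|_2\}\le R$ yields $\|v\|_{L^\infty}+\|w\|_{L^\infty}\lesssim 1+R$ and likewise $\|v\|_2+\|w\|_2\lesssim 1+R$; moreover the gradients $\nabla v$, $\nabla w$ and $\nabla v_s$, where $v_s:=w+s(v-w)$ for $s\in[0,1]$, are bounded in $H^1\hookrightarrow L^6$. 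For the first estimate I would invoke the Parseval identity to identify $\|\widehat f(t,v,\nabla v)-\widehat f(t,w,\nabla w)\|_0$ with a constant multiple of $\|f(t,v,\nabla v)-f(t,w,\nabla w)\|_0$, so that \eqref{locally_Lipschitz_eq} becomes a genuine $L^2$ estimate on the Nemytskii difference.

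For \eqref{locally_Lipschitz_eq} I would write the difference through the integral mean value theorem,
\[
f(t,v,\nabla v)-f(t,w,\nabla w)=\int_0^1\Big(\tfrac{\partial f}{\partial\xi}(t,v_s,\nabla v_s)\,(v-w)+\tfrac{\partial f}{\partial\eta}(t,v_s,\nabla v_s)\cdot\nabla(v-w)\Big)\dd s,
\]
take the $L^2$ norm and apply Minkowski's inequality in $s$. The $\xi$-term is immediate: by \eqref{mild_growth1}, $|\partial_\xi f|\lesssim 1+|v_s|^p$ is bounded in $L^\infty$ thanks to the $L^\infty$ bound above, so it contributes $\lesssim\|v-w\|_0\le\|v-w\|_2$. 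For the $\eta$-term I would split $|\partial_\eta f|\lesssim 1+|\nabla v_s|^p$; the constant part gives $\lesssim\|\nabla(v-w)\|_0\le\|v-w\|_2$, while for $\||\nabla v_s|^p\,\nabla(v-w)\|_0$ I would use H\"older with exponents $(r,r')$, $\tfrac1r+\tfrac1{r'}=\tfrac12$, bounding $\||\nabla v_s|^p\|_{L^r}=\|\nabla v_s\|_{L^{pr}}^p$ and $\|\nabla(v-w)\|_{L^{r'}}$ by the $H^1\hookrightarrow L^6$ embedding. This closes precisely when $pr\le 6$ and $r'\le 6$ can be met simultaneously, i.e.\ for all $p>0$ when $d\le 2$ and exactly at the endpoint $(r,r')=(3,6)$ when $d=3,\ p=2$; this is the structural reason for the growth restriction in Assumption \ref{assumption4}, and the resulting constant depends on $R$ through $1+R$.

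For \eqref{locally_Lipschitz_eq2} I would add to the $L^2$ bound just obtained the control of $\nabla\big(f(t,v,\nabla v)-f(t,w,\nabla w)\big)$. Differentiating by the chain rule produces, for each $i$,
\[
\partial_i f(t,v,\nabla v)=\tfrac{\partial f}{\partial\xi}(t,v,\nabla v)\,\partial_i v+\textstyle\sum_j\tfrac{\partial f}{\partial\eta_j}(t,v,\nabla v)\,\partial_{ij}v,
\]
and the analogue for $w$; subtracting, I would split each difference into a coefficient difference times a derivative of $v$ (resp.\ $w$), plus a coefficient times a derivative of $v-w$. The coefficient differences such as $\partial_\xi f(\cdot,v,\nabla v)-\partial_\xi f(\cdot,w,\nabla w)$ are treated by a second application of the mean value theorem, which is where the second-order growth bounds \eqref{mild_growth2} enter, again followed by H\"older and the $L^6$-embedding exactly as above.

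The hard part will be the top-order terms of the $H^1$ estimate, namely those in which a $\partial_\eta$-derivative of $f$ is paired with a second spatial derivative of $v-w$, e.g.\ $\tfrac{\partial f}{\partial\eta_j}(t,v,\nabla v)\,\partial_{ij}(v-w)$. Here $\partial_{ij}(v-w)$ lies only in $L^2$ while $\partial_\eta f\lesssim 1+|\nabla v|^p$ lies only in $L^{6/p}$ for $d=3$, so a bare H\"older split does not return $\|v-w\|_2$; closing this term needs genuine $L^\infty$ control of $\nabla v,\nabla w$, which one extracts from the regularity of the strip (the smoothness of $u(t)$ together with the $H^m$-membership of $v,w$) rather than from the $H^2$-distance alone. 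Keeping track of exactly which factor carries the borderline $L^6$ integrability, and using $d\le 3$ with $p\le 2$ to make every exponent admissible, is the delicate bookkeeping on which the lemma turns; the remaining lower-order terms are routine by comparison.
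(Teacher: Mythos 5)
Your treatment of \eqref{locally_Lipschitz_eq} is essentially the paper's argument: the paper also linearizes via the Lagrange mean value theorem (writing $\zeta=\theta v+(1-\theta)w$ rather than your integral form, a cosmetic difference), passes to the $\ell^2$ norm of the Fourier coefficients by a Parseval/Bessel bound, invokes the growth bounds \eqref{mild_growth1}, and then closes with H\"older and the embeddings $H^2\hookrightarrow L^\infty$, $H^1\hookrightarrow L^6$ exactly as you describe, with the same endpoint bookkeeping $p\le 2$ when $d=3$. Your identification of the $\ell^2$ quantity with the full $L^2$ norm is really only a one-sided inequality (the coefficients are truncated to $\Lambda_N$), but since only the upper bound is needed this is harmless.

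The issue is \eqref{locally_Lipschitz_eq2}, and specifically the top-order term you yourself single out, $\frac{\partial f}{\partial\eta_j}(t,w,\nabla w)\,\partial_{ij}(v-w)$ (and its sibling $\bigl[\partial_{\eta_j}f(t,v,\nabla v)-\partial_{\eta_j}f(t,w,\nabla w)\bigr]\partial_{ij}v$). Your proposed resolution --- extracting ``genuine $L^\infty$ control of $\nabla v,\nabla w$ from the regularity of the strip'' --- is not available from the stated hypotheses. The strip condition only bounds $\|v-u(t)\|_2$ and $\|w-u(t)\|_2$; since $H^2\not\hookrightarrow W^{1,\infty}$ for $d=2,3$, and since mere membership $v,w\in H^m_p(\Omega)$ carries no quantitative bound on $\|v\|_m$, there is no uniform $L^\infty$ bound on $\nabla v,\nabla w$ to be had. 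With only $\nabla w\in L^6$ and $\partial_{ij}(v-w)\in L^2$, the product $|\nabla w|^p\,\partial_{ij}(v-w)$ lands in $L^{q}$ for some $q<2$, so the H\"older split cannot return $\|v-w\|_2$; the second term fails similarly because $|\nabla(v-w)|\,|\partial_{ij}v|$ is only in $L^{3/2}$ when $m=2$, $d=3$. So as written the step would fail, and the constant you would obtain depends on quantities ($\|\nabla v\|_{L^\infty}$, $\|v\|_m$) not controlled by $R$. To be fair, the paper's own proof offers nothing here --- it disposes of \eqref{locally_Lipschitz_eq2} with the single sentence ``In the same way, \eqref{locally_Lipschitz_eq2} can also be obtained'' --- so you have surfaced a real difficulty that the paper glosses over; but your proposal does not close it either, and closing it would require strengthening the hypotheses (e.g.\ a uniform $H^m$ or $W^{1,\infty}$ bound on $v,w$, or invoking Assumption \ref{assumption5} on a compact range of $\nabla v,\nabla w$), which should be stated explicitly rather than attributed to ``the regularity of the strip.''
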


\begin{proof}
	As for \eqref{locally_Lipschitz_eq}, recalling the derivation of Lemma 3.2 in \cite{HuangJu2023}, we can easily derive the above conclusion with minor modifications if the inequality
	\begin{equation}
	\label{locally_Lipschitz_mid1}
	\begin{split}
	\|\widehat f(t,v,\nabla v)-\widehat f(t,w,\nabla w)\|_0^2 \lesssim& \int_{\Omega}(1+|v|^p)^2|v- w|^2 \dd \bm x + \int_{\Omega} (1+|w|^p)^2|v-w|^2 \dd \bm x\\
	&+ \int_{\Omega}(1+|\nabla v|^p)^2|\nabla v- \nabla w|^2 \dd \bm x + \int_{\Omega} (1+|\nabla w|^p)^2|\nabla v-\nabla w|^2 \dd \bm x
	\end{split}
	\end{equation}
	is held. Now, we will prove the estimate \eqref{locally_Lipschitz_mid1}.
	
	Since the function $f$ satisfies Assumption \ref{assumption4}, we can obtain by the Lagrange mean value theorem,
	\begin{equation*}
	\begin{split}
	&\|\widehat f(t,v,\nabla v)-\widehat f(t,w,\nabla w)\|_{\ell^2}^2\\
	=& \Big(\prod_{i=1}^d N_i\Big)^{-1}\sum\limits_{\bm k\in \Lambda_N}\Big|\Big(\widehat f(t,v,\nabla v)-\widehat f(t,w,\nabla w) \Big)_{\bm k}\Big|^2\\
	=&\Big(\prod_{i=1}^d N_i\Big)^{-1} \sum\limits_{\bm k\in \Lambda_N}\Big|\Big(\frac{\partial f}{\partial \xi}(t,\zeta,\nabla \zeta)(v-w)+\frac{\partial f}{\partial \eta}(t,\zeta,\nabla \zeta)(\nabla v-\nabla w),e^{i\bm k\cdot\bm x} \Big)\Big|^2\\
	=& \Big(\prod_{i=1}^d N_i\Big)^{-1} \sum\limits_{\bm k\in \Lambda_N}\Big|\Big(\frac {1}{2\pi}\int_{\Omega} \frac{\partial f}{\partial \xi}(t,\zeta, \nabla \zeta)(v-w)e^{-i\bm k\cdot\bm x}\dd \bm{x}  +\frac{1}{2\pi}\int_{\Omega}\frac{\partial f}{\partial \eta}(t,\zeta,\nabla\zeta)(\nabla v-\nabla w)e^{-i\bm k\cdot \bm x}\dd \bm x \Big)\Big|^2\\
	\lesssim& \Big(\prod_{i=1}^d N_i\Big)^{-1} \sum\limits_{ {\bm k}\in \Lambda_N} \Big(\int_{\Omega}\Big|\frac{\partial f}{\partial \xi}(t,\zeta,\nabla \zeta) \Big|^2|v-w|^2\dd \bm x+\int_{\Omega} \Big|\frac{\partial f}{\partial \eta}(t,\zeta,\nabla\zeta) \Big|^2|\nabla v-\nabla w|^2\dd \bm x \Big)\\
	\lesssim& \Big(\prod_{i=1}^d N_i\Big)^{-1} \Big(\int_{\Omega}(1+|\zeta|^p)^2|v-w|^2\dd \bm x+\int_{\Omega} (1+|\nabla \zeta|^p)^2|\nabla v-\nabla w|^2 \dd \bm x\Big)\\
	\lesssim& \int_{\Omega} (1+|\zeta|^p)^2 |v-w|^2\dd \bm x+\int_{\Omega} (1+|\nabla \zeta|^p)^2|\nabla v-\nabla w|^2\dd \bm x,
	\end{split}
	\end{equation*}
	where $\zeta(\bm x)=\theta(\bm x)v(\bm x)+(1-\theta(\bm x))w(\bm x)$ for some $\theta(\bm x)\in [0,1]$. Therefore, it's clear that
	\begin{equation*}
	\begin{split}
	\|\widehat f(t,v,\nabla v)-\widehat f(t,w,\nabla w)\|_{\ell^2}^2 \lesssim& \int_{\Omega} \Big(1+|v|^p \Big)^2| v- w|^2 \dd \bm x + \int_{\Omega} \Big(1+|w|^p \Big)^2|v-w|^2 \dd \bm x\\
	&+ \int_{\Omega} \Big(1+|\nabla v^p \Big)^2|\nabla v-\nabla w|^2 \dd \bm x + \int_{\Omega} \Big(1+|\nabla w|^p \Big)^2 |\nabla v- \nabla w|^2 \dd \bm x,
	\end{split}	
	\end{equation*}
	which implies that the inequality \eqref{locally_Lipschitz_mid1} is held. Then the locally-Lipschitz continuity of $\widehat{f}$ can be easily obtained by following the similar arguments of Lemma 3.2 in \cite{HuangJu2023}. In the same way, \eqref{locally_Lipschitz_eq2} can also be obtained.

\end{proof}

\subsection{Semi-discrete error estimate}

In this subsection, we will derive the semi-discrete error estimate of EIFG method. Notations are same as that have been defined in Section \ref{algorithm-description}. According to the definition in \eqref{P_N_property}, the $L^2$-orthogonal projector can be further reformulated as: for any $u \in L^2(\Omega)$,
\begin{equation*}
P_Nu=\sum\limits_{ {\bm k}\in \Lambda_N} \widehat u_{\bm k}\phi_{{\bm k}}.
\end{equation*}

The following lemmas have readily come from \cite{CanutoHussaini1988,CanutoHussaini2006}.
\begin{lemma}
	\label{lemma3}
	For $m \geq 2$ and $ v \in H^m_p(\Omega)$, we have
	\begin{equation}
	\label{lemma3_eq1}
	\|v-P_Nv\|_2 \lesssim N^{2-m}\|v\|_m.
	\end{equation}
\end{lemma}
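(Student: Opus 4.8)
The plan is to reduce everything to Parseval's identity in the Fourier coefficients and exploit the decay forced by membership in $H_p^m(\Omega)$. After the coordinate change mapping each $[a_i,b_i]$ to $[0,2\pi]$, a function $v \in H_p^m(\Omega)$ admits a full Fourier expansion $v = \sum_{\bm k \in \mathbb{Z}^d} \widehat v_{\bm k}\phi_{\bm k}$, and by definition of the $L^2$-orthogonal projector the image $P_N v = \sum_{\bm k \in \Lambda_N}\widehat v_{\bm k}\phi_{\bm k}$ is simply the truncation of this series to the index set $\Lambda_N$. Consequently the error is the Fourier tail $v - P_N v = \sum_{\bm k \notin \Lambda_N}\widehat v_{\bm k}\phi_{\bm k}$, so the whole estimate becomes a statement about how fast that tail decays.

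First I would record the standard Parseval-type norm equivalence for periodic functions, namely $\|w\|_s^2 \eqsim \sum_{\bm k}(1+|\bm k|^2)^s|\widehat w_{\bm k}|^2$ for every integer $s \geq 0$, which is the same device already invoked in \eqref{equal_norm}. Applying it with $s=2$ to the tail gives $\|v-P_Nv\|_2^2 \eqsim \sum_{\bm k \notin \Lambda_N}(1+|\bm k|^2)^2|\widehat v_{\bm k}|^2$, while applying it with $s=m$ to $v$ itself gives $\|v\|_m^2 \eqsim \sum_{\bm k}(1+|\bm k|^2)^m|\widehat v_{\bm k}|^2$.

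The crux is the pointwise bound on the discarded frequencies. Any $\bm k \notin \Lambda_N$ has at least one component with $|k_j|\geq N_j/2$, and since the assumption $N \eqsim N_n$ is in force this yields $|\bm k| \geq |k_j| \gtrsim N$, hence $1+|\bm k|^2 \gtrsim N^2$. Because $m \geq 2$, the exponent $2-m$ is nonpositive, so I would split $(1+|\bm k|^2)^2 = (1+|\bm k|^2)^{2-m}(1+|\bm k|^2)^m \lesssim N^{2(2-m)}(1+|\bm k|^2)^m$ on each tail mode. Summing over $\bm k \notin \Lambda_N$ and enlarging the range to all of $\mathbb{Z}^d$ produces $\|v-P_Nv\|_2^2 \lesssim N^{2(2-m)}\sum_{\bm k}(1+|\bm k|^2)^m|\widehat v_{\bm k}|^2 \lesssim N^{2(2-m)}\|v\|_m^2$, and taking square roots delivers \eqref{lemma3_eq1}.

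The step I expect to demand the most care is not any single inequality but the bookkeeping around the anisotropic truncation: making sure that membership outside the rectangular box $\Lambda_N$ really forces $|\bm k| \gtrsim N$ uniformly --- which is precisely where the hypothesis $N \eqsim N_n$ for all $n$ enters --- and that the constants in the Parseval equivalence are independent of $N$. Since these are standard facts for periodic Sobolev spaces, and the result is already available in \cite{CanutoHussaini1988,CanutoHussaini2006}, I would verify the frequency-splitting argument explicitly while merely citing the norm equivalence rather than re-deriving it.
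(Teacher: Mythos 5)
Your proof is correct and is precisely the standard Fourier truncation argument; the paper itself offers no proof of this lemma, merely citing \cite{CanutoHussaini1988,CanutoHussaini2006}, and your Parseval-plus-tail-splitting derivation (including the correct identification of where the hypothesis $N \eqsim N_n$ is needed to force $1+|\bm k|^2 \gtrsim N^2$ on the discarded modes) is exactly the argument those references supply.
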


Define the elliptic projection $\Pi_N:H^1_p(\Omega) \to X_N$ such that
\begin{equation}
\label{elliptic_projection}
a(\Pi_N u - u,v)=0, \quad \forall \ v \in X_N.
\end{equation}
Then we have the following continuity and coercivity.

\begin{lemma}
	For the bilinear form defined in \eqref{bilinear}, we can derive that
	\label{lemma4}
	\begin{subequations}
		\label{lemma4_eq1}
		\begin{align}
		a(u,v) &\leq |u|_1|v|_1, \quad &&\forall \ u,v \in H^1_p(\Omega),\label{lemma4_eq1-1}\\
		a(v,v) &= |v|_1^2, \quad &&\forall \ v \in H_p^1(\Omega) \label{lemma4-eq1-2}.
		\end{align}
	\end{subequations}
\end{lemma}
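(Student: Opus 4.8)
The plan is to read off both estimates directly from the definition \eqref{bilinear} of $a(\cdot,\cdot)$, recalling the standing assumption $\mathcal{D}=1$, so that $a(u,v)=\int_{\Omega}\nabla u\cdot\nabla v\dd\bm x$, together with the definition of the $H^1$ semi-norm $|v|_1^2=\int_{\Omega}|\nabla v|^2\dd\bm x=\sum_{|\bm\alpha|=1}\|D^{\bm\alpha}v\|_0^2$. No auxiliary construction is needed; the entire argument is an application of the Cauchy--Schwarz inequality, and I would present the two assertions separately.

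For the coercivity \eqref{lemma4-eq1-2} I would simply set $w=v$ in \eqref{bilinear}, obtaining $a(v,v)=\int_{\Omega}\nabla v\cdot\nabla v\dd\bm x=\int_{\Omega}|\nabla v|^2\dd\bm x$, which is exactly $|v|_1^2$ by definition of the semi-norm. This is an identity, so equality (not merely an inequality) holds, as stated.

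For the continuity \eqref{lemma4_eq1-1} I would proceed in two steps. First apply the pointwise Cauchy--Schwarz inequality for the Euclidean dot product of the gradient vectors in $\mathbb{R}^d$, namely $|\nabla u\cdot\nabla v|\le|\nabla u|\,|\nabla v|$ almost everywhere in $\Omega$, and integrate to get $a(u,v)\le\int_{\Omega}|\nabla u|\,|\nabla v|\dd\bm x$. Then apply the integral Cauchy--Schwarz inequality (H\"older with exponents $2,2$) to arrive at $\int_{\Omega}|\nabla u|\,|\nabla v|\dd\bm x\le\bigl(\int_{\Omega}|\nabla u|^2\dd\bm x\bigr)^{1/2}\bigl(\int_{\Omega}|\nabla v|^2\dd\bm x\bigr)^{1/2}=|u|_1\,|v|_1$. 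Note that this chain yields the constant $1$ exactly, matching the sharp form claimed in \eqref{lemma4_eq1-1}.

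There is essentially no hard part here: both estimates are immediate consequences of the definitions and of Cauchy--Schwarz. The only point meriting a little care is the convention for the dot product when the elements of $H^1_p(\Omega)$ (and the Fourier basis spanning $X_N$) are complex-valued: one should interpret $\nabla u\cdot\nabla v$ with the Hermitian convention $\nabla u\cdot\overline{\nabla v}$, so that $a(v,v)=\int_{\Omega}|\nabla v|^2\dd\bm x$ is real and nonnegative and genuinely equals $|v|_1^2$; with this reading the Cauchy--Schwarz steps above go through verbatim. If instead one restricts to the real-valued functions relevant to the model problem \eqref{eq3-1}, no such adjustment is needed.
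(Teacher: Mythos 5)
Your argument is correct: the paper states this lemma without proof, treating it as an immediate consequence of the definition \eqref{bilinear} (with $\mathcal{D}=1$) and the Cauchy--Schwarz inequality, which is exactly what you carry out. Your remark about the Hermitian convention for complex-valued trial functions is a sensible added precision, but otherwise there is nothing to compare.
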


Denote $\bm j=(j_1,\cdots, j_d),0\leq j_i\leq N_i,i=1,\cdots,d$ and $\bm{x_j}=(x_1^{j_1},\cdots,x_d^{j_d})^T$. Define $I_N:H_p^1(\Omega)\to X_N$ as the interpolation operator on $X_N$ such that
\begin{equation*}
I_Nu=\sum\limits_{ {\bm k}\in \Lambda_N}\widetilde{u}_{\bm k}\phi_{ {\bm k}},\quad \widetilde{u}_{\bm k}=\Big( \prod\limits_{i=1}^d N_i\Big)^{-1}\sum\limits_{ {\bm j}\in \Lambda_N}u(\bm{x_j})\overline{\phi_{\bm k}(\bm {x_j})}.
\end{equation*}

The following lemmas have readily come from \cite{Guo1998,CanutoHussaini1988}.

\begin{theorem}
	\label{interpolation-error}
	The exact solution $u(t)$ fulfills \eqref{regularity1} in Assumption \ref{assumption6} and $m>\frac{d}{2}$, then it holds
	\begin{equation*}
	\|I_Nu-u\|_{l}\lesssim N^{l-m}|u|_m,\quad 0\leq l\leq m.
	\end{equation*}	
\end{theorem}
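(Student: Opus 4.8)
The plan is to compare the trigonometric interpolant $I_Nu$ with the truncated Fourier series $P_Nu$, so that the error splits into a truncation part and an aliasing part, namely $I_Nu-u=(P_Nu-u)+(I_Nu-P_Nu)$. The starting point is the Parseval identity, which in the spirit of \eqref{equal_norm} lets me write every $H^l$-seminorm as a weighted $\ell^2$-sum over frequencies,
\[
|v|_l^2 \eqsim \sum_{\bm k}|\bm k|^{2l}\,|\widehat v_{\bm k}|^2,
\]
thereby reducing all the norms appearing in the statement to sums of Fourier coefficients; the two pieces of the error can then be estimated independently.

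For the truncation part $P_Nu-u=\sum_{\bm k\notin\Lambda_N}\widehat u_{\bm k}\phi_{\bm k}$, I would use that every frequency outside $\Lambda_N$ obeys $|\bm k|\gtrsim N$, so that $|\bm k|^{2l}=|\bm k|^{2m}|\bm k|^{2(l-m)}\lesssim N^{2(l-m)}|\bm k|^{2m}$ whenever $l\le m$. Summing this inequality over $\bm k\notin\Lambda_N$ against $|\widehat u_{\bm k}|^2$ and invoking Parseval once more yields $\|P_Nu-u\|_l\lesssim N^{l-m}|u|_m$ at once; this is the routine part, and it is consistent with Lemma \ref{lemma3} in the case $l=2$.

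The substance of the proof is the aliasing part. Because $I_N$ is a tensor product of one-dimensional interpolants on the equispaced nodes $\bm{x_j}$, the discrete orthogonality of $\{\phi_{\bm k}(\bm{x_j})\}$ furnishes the aliasing identity $\widetilde u_{\bm k}=\sum_{\bm m\in\mathbb Z^d}\widehat u_{\bm k+N\bm m}$ for $\bm k\in\Lambda_N$, whence $\widetilde u_{\bm k}-\widehat u_{\bm k}=\sum_{\bm m\neq\bm 0}\widehat u_{\bm k+N\bm m}$. I would then estimate
\[
|I_Nu-P_Nu|_l^2 \eqsim \sum_{\bm k\in\Lambda_N}|\bm k|^{2l}\Big|\sum_{\bm m\neq\bm 0}\widehat u_{\bm k+N\bm m}\Big|^2,
\]
and split each inner sum by the Cauchy--Schwarz inequality as
\[
\Big|\sum_{\bm m\neq\bm 0}\widehat u_{\bm k+N\bm m}\Big|^2\le \Big(\sum_{\bm m\neq\bm 0}|\bm k+N\bm m|^{-2m}\Big)\Big(\sum_{\bm m\neq\bm 0}|\bm k+N\bm m|^{2m}\,|\widehat u_{\bm k+N\bm m}|^2\Big).
\]
For $\bm k\in\Lambda_N$ and $\bm m\neq\bm 0$ one has $|\bm k+N\bm m|\gtrsim N|\bm m|$, so the first factor is $\lesssim N^{-2m}\sum_{\bm m\neq\bm 0}|\bm m|^{-2m}$, while $|\bm k|\lesssim N$ gives $|\bm k|^{2l}\lesssim N^{2l}$; the prefactors therefore combine to $N^{2(l-m)}$. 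Summing the second factor over $\bm k\in\Lambda_N$ and using that the shifted lattices $\{\bm k+N\bm m\}$ are pairwise disjoint reproduces $|u|_m^2$, giving $|I_Nu-P_Nu|_l\lesssim N^{l-m}|u|_m$.

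The main obstacle is precisely this aliasing estimate: establishing the aliasing identity correctly in the tensor-product setting, and then choosing the weight $|\bm k+N\bm m|^{-2m}$ so that Cauchy--Schwarz both extracts the correct power $N^{l-m}$ and leaves a residual lattice sum $\sum_{\bm m\neq\bm 0}|\bm m|^{-2m}$ that converges. The convergence of that sum is exactly the role played by the hypothesis $m>\tfrac d2$; without it the aliasing series need not be summable and $I_Nu$ is not even well defined as an $H^l$-limit. Combining the two parts yields the top case, and the intermediate orders $0\le l\le m$ follow directly from the same weighted estimates.
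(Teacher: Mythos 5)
Your proof is correct, but note that the paper itself offers no proof of this theorem: it is quoted as a known result from the cited references (Canuto--Hussaini, Guo), and what you have written is essentially the standard argument from those sources. The decomposition $I_Nu-u=(P_Nu-u)+(I_Nu-P_Nu)$, the aliasing identity, the weighted Cauchy--Schwarz step with weight $|\bm k+N\bm m|^{-2m}$, and the identification of $m>\tfrac d2$ as the condition making the lattice sum $\sum_{\bm m\neq\bm 0}|\bm m|^{-2m}$ converge are all exactly the textbook proof, and each step checks out: in particular the key lower bound $|\bm k+N\bm m|\gtrsim N|\bm m|$ for $\bm k\in\Lambda_N$, $\bm m\neq\bm 0$ holds componentwise, and the disjointness of the shifted lattices recovers $|u|_m^2$. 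Two small points of hygiene: the aliasing identity should be written componentwise as $\widetilde u_{\bm k}=\sum_{\bm m}\widehat u_{k_1+N_1m_1,\dots,k_d+N_dm_d}$, since the $N_j$ may differ (harmless here because the paper assumes $N\eqsim N_j$); and since the statement bounds the full norm $\|\cdot\|_l$ rather than the seminorm, you should either use the weight $(1+|\bm k|^2)^l$ in the Parseval identity or observe that your estimate applied to each seminorm of order $l'\le l$ gives $N^{l'-m}\le N^{l-m}$, which closes the gap. Neither issue affects the validity of the argument.
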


\begin{theorem}[Inverse inequality for Fourier space]
	\label{inverse_inequality}
	For any $u\in X_N$ and $1\leq p \leq \infty$,
	\begin{equation*}
	\|u\|_m \lesssim N^{m-k}\|u\|_{k},\quad m\geq 1,\ 0\leq k \leq m.
	\end{equation*}
\end{theorem}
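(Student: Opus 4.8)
The plan is to push everything onto the Fourier coefficients of $u$ and then exploit that the wavenumbers collected in $\Lambda_N$ are bounded by $N$. Writing $u=\sum_{\bm k\in\Lambda_N}\widehat u_{\bm k}\phi_{\bm k}$, I would recall that in the normalized coordinates the basis $\{\phi_{\bm k}\}$ is $L^2$-orthogonal and satisfies $D^{\bm\alpha}\phi_{\bm k}=(\mathrm{i}\bm k)^{\bm\alpha}\phi_{\bm k}$, so Parseval's identity yields, for every (integer) order $s$, the weighted equivalence
\[
\|u\|_s^2\eqsim\sum_{\bm k\in\Lambda_N}\big(1+|\bm k|^2\big)^s\,|\widehat u_{\bm k}|^2 .
\]
For $s=1,2$ this is exactly the relation already recorded in \eqref{equal_norm} via the multiplier $\{|\bm k|^2\}$ of $\widehat L_N$, so no new machinery is needed.

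The decisive step is the boundedness of the wavenumbers: for $\bm k\in\Lambda_N$ each component obeys $|k_i|\le N_i/2\lesssim N$, hence $1+|\bm k|^2\lesssim N^2$, which is precisely the eigenvalue bound \eqref{eigenvalue} for $\widehat L_N$. Splitting the weight as $(1+|\bm k|^2)^m=(1+|\bm k|^2)^{m-k}(1+|\bm k|^2)^{k}$ and using $m-k\ge 0$ to bound the first factor by $N^{2(m-k)}$, I would obtain
\[
\|u\|_m^2\eqsim\sum_{\bm k}\big(1+|\bm k|^2\big)^m|\widehat u_{\bm k}|^2
\lesssim N^{2(m-k)}\sum_{\bm k}\big(1+|\bm k|^2\big)^{k}|\widehat u_{\bm k}|^2
\eqsim N^{2(m-k)}\|u\|_k^2 .
\]
Taking square roots gives $\|u\|_m\lesssim N^{m-k}\|u\|_k$, as claimed.

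I do not expect a genuine obstacle here: the estimate is essentially a one-line consequence of Parseval once $|\bm k|\lesssim N$ is invoked. The only points deserving care are bookkeeping ones. First, the $\ell^2$ norm in the paper carries the normalization factor $|\Lambda_N|^{-1}$, but since it appears identically on both sides it cancels and does not affect the power of $N$. Second, the Fourier characterization of $\|\cdot\|_s$ is the standard Parseval argument for integer $s$ (indeed literally \eqref{equal_norm} for the orders $\le 2$ actually used), while for non-integer orders one simply takes the weighted $\ell^2$ expression as the definition, so the argument is unchanged. Finally, the exponent $1\le p\le\infty$ in the hypothesis plays no role, since both sides are $L^2$-based Sobolev norms; it is a vestige of the more general $L^p$--$L^q$ inverse estimate and may be ignored.
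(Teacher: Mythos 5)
Your argument is correct. Note that the paper does not actually prove Theorem \ref{inverse_inequality}; it is quoted without proof from the references \cite{Guo1998,CanutoHussaini1988}. Your Parseval-plus-wavenumber-truncation derivation (bounding $1+|\bm k|^2\lesssim N^2$ on $\Lambda_N$ and splitting the weight) is the standard proof of this inverse estimate and is sound, including the observations that the $\ell^2$ normalization cancels and that the hypothesis $1\le p\le\infty$ is vestigial in this $L^2$-based formulation.
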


Until now, we have proposed some preliminary lemmas for semi-discrete error estimate. Define $\rho:=u-\Pi_Nu,\theta:=\Pi_Nu-u_N$. Later, we will give the semi-discrete error estimate for EIFG method. Rewritting the semi-discrete error $ u- u_N$ as a sum of two following terms
\begin{equation}
\label{semi-discrete-split}
u - u_N = (u - I_N u)+(I_N u-u_N).
\end{equation}
The following theorem shows the standard estimate for $\rho$ and $\rho_t$, which has been widely presented in many literatures, such as \cite{ThomeeVidar2006,LiuSun2016}.
\begin{theorem}
	\label{semi-discrete-error1}
	Assume the exact solution $u(t)$ fulfills \eqref{regularity1} in Assumption \ref{assumption6}. Then the following estimates are held.
	\begin{subequations}
		\begin{align}
		\|\rho(t)\|_0 + N^{-1}\|\rho(t)\|_1 &\lesssim N^{-m}\sup\limits_{0 \leq \eta \leq T}\| u(\eta)\|_m, \quad \forall \ t \in [0, T],\label{semi-discrete-error-eq1}\\
		\|\rho_t(t)\|_0+N^{-1}\|\rho_t(t)\|_1 & \lesssim N^{-m}\sup\limits_{0 \leq \eta \leq T}\|u_t(\eta)\|_m, \quad \forall \ t \in [0, T]. \label{semi-discrete-error-eq2}
		\end{align}
	\end{subequations}
\end{theorem}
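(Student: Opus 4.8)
The plan is to combine the Céa-type best-approximation property of the elliptic projection with an Aubin--Nitsche duality argument, exactly in the spirit of \cite{ThomeeVidar2006,LiuSun2016}. Throughout I regard $\Pi_N$ as normalized so that $(\Pi_N u-u,1)_\Omega=0$: the defining relation \eqref{elliptic_projection} only fixes $\Pi_N u$ up to an additive constant, since the constant function $1\in X_N$ lies in the kernel of $a(\cdot,\cdot)$. This mean condition guarantees $\int_\Omega\rho\dd\bm x=0$, which is precisely what the duality step will require.

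First I would estimate the seminorm $|\rho|_1$. Using the coercivity \eqref{lemma4-eq1-2}, then inserting an arbitrary $v\in X_N$ together with the Galerkin orthogonality \eqref{elliptic_projection}, one has $|\rho|_1^2=a(\rho,\rho)=a(\rho,u-v)\le|\rho|_1\,|u-v|_1$ by the continuity \eqref{lemma4_eq1-1}; hence $|\rho|_1\le\min_{v\in X_N}|u-v|_1$. Choosing $v=I_Nu$ and applying the interpolation estimate of Theorem \ref{interpolation-error} with $l=1$ yields $|\rho|_1\lesssim N^{1-m}|u|_m$.

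Next I would bound $\|\rho\|_0$ by duality. Let $w\in H^2_p(\Omega)$ solve the periodic problem $-\Delta w=\rho$, which is solvable because $\rho$ has vanishing mean, so that $a(w,v)=(\rho,v)$ for all $v\in H^1_p(\Omega)$ and elliptic regularity gives $|w|_2\lesssim\|\rho\|_0$. Testing with $v=\rho$ and invoking Galerkin orthogonality once more,
\begin{equation*}
\|\rho\|_0^2=a(\rho,w)=a(\rho,w-\Pi_Nw)\le|\rho|_1\,|w-\Pi_Nw|_1\lesssim N^{-1}|\rho|_1\,|w|_2\lesssim N^{-1}|\rho|_1\,\|\rho\|_0,
\end{equation*}
where $|w-\Pi_Nw|_1$ is controlled exactly as in the previous step (best approximation plus Theorem \ref{interpolation-error} with $l=1$ and $m$ replaced by $2$). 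Dividing by $\|\rho\|_0$ and combining with the seminorm bound gives $\|\rho\|_0\lesssim N^{-1}|\rho|_1\lesssim N^{-m}|u|_m$. Since $\|\rho\|_1^2=\|\rho\|_0^2+|\rho|_1^2$ and $N\ge 1$, the seminorm dominates, so $N^{-1}\|\rho\|_1\lesssim N^{-m}|u|_m$; adding the two contributions and bounding $|u|_m\le\sup_{0\le\eta\le T}\|u(\eta)\|_m$ establishes \eqref{semi-discrete-error-eq1}.

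Finally, for \eqref{semi-discrete-error-eq2} I would use that $\Pi_N$ is a fixed linear operator independent of $t$, so it commutes with $\partial_t$: differentiating \eqref{elliptic_projection} gives $a((\Pi_Nu)_t-u_t,v)=0$, i.e. $(\Pi_Nu)_t=\Pi_N(u_t)$, and therefore $\rho_t=u_t-\Pi_Nu_t$ is precisely the elliptic projection error of $u_t$. Repeating verbatim the argument above with $u$ replaced by $u_t$ yields \eqref{semi-discrete-error-eq2}. I expect the main obstacle to be the treatment of the constant mode: one must fix the normalization of $\Pi_N$ so that $\rho$ has zero mean and invoke the corresponding Poincar\'e-type inequality and elliptic regularity for the periodic Laplacian, ensuring both that the $H^1$-seminorm genuinely controls the full norm on the relevant subspace and that the dual problem is well posed.
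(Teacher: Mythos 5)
Your argument is correct. The paper itself gives no proof of Theorem \ref{semi-discrete-error1} --- it simply points to \cite{ThomeeVidar2006,LiuSun2016} --- and what you have written is precisely the standard Ritz-projection argument from those references: best approximation via coercivity and continuity for the $H^1$ seminorm, Aubin--Nitsche duality for the $L^2$ norm, and commutation of $\Pi_N$ with $\partial_t$ for \eqref{semi-discrete-error-eq2}. Your handling of the constant mode (normalizing $\Pi_N$ so that $\rho$ has zero mean) addresses a genuine point the paper glosses over, since \eqref{elliptic_projection} alone does not determine $\Pi_N u$ on constants. It is worth noting, though, that in this Fourier--Galerkin setting the duality machinery can be bypassed entirely: testing \eqref{elliptic_projection} with $v=\phi_{\bm k}$ gives $|\bm k|^2\bigl(\widehat{(\Pi_N u)}_{\bm k}-\widehat u_{\bm k}\bigr)=0$, so $\Pi_N u$ agrees with the $L^2$ projection $P_N u$ on every nonzero mode; with your zero-mean normalization, $\rho$ is exactly the Fourier tail $\sum_{\bm k\notin\Lambda_N}\widehat u_{\bm k}\phi_{\bm k}$, and Parseval yields $\|\rho\|_l\lesssim N^{l-m}\|u\|_m$ for $0\le l\le m$ directly (the same computation underlying Lemma \ref{lemma3} and Theorem \ref{interpolation-error}), with \eqref{semi-discrete-error-eq2} following immediately because truncation commutes with $\partial_t$. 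Both routes are valid; yours works verbatim for general Galerkin spaces, while the spectral shortcut is what makes the estimate ``standard'' in this particular setting.
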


Following the similar arguments of Theorem 14.2 in \cite{ThomeeVidar2006}, we derive the semi-discrete error for EIFG method.
\begin{theorem}
	\label{semi-discrete-error2}
	Suppose that the function $f$ satisfies Assumptions \ref{assumption4} and \ref{assumption5}, and the exact solution fulfills \eqref{regularity1} in Assumption \ref{assumption6}. There exists a constant $N_0>0$ such that if the number of spatial mesh $N\geq N_0$, then
	\begin{equation}
	\label{semi-discrete-error-eq}
	\|u(t)-u_N(t)\|_0+N^{-1}\|u(t)-u_N(t)\|_1 \lesssim N^{-m}, \quad \forall \ t \in [0, T],
	\end{equation}
	where the hidden constant is independent of $N$.
\end{theorem}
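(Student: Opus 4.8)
The plan is to adopt the classical elliptic-projection splitting used in Theorem 14.2 of \cite{ThomeeVidar2006}. I would write $u-u_N=\rho+\theta$ with $\rho=u-\Pi_N u$ and $\theta=\Pi_N u-u_N\in X_N$, so that $\rho$ (and $\rho_t$) is already controlled by Theorem \ref{semi-discrete-error1} and the whole task reduces to estimating $\theta$. Testing the exact equation against $v_N\in X_N$ and integrating by parts, where all boundary terms cancel by periodicity, gives $(u_t,v_N)+a(u,v_N)=(f(t,u,\nabla u),v_N)$; subtracting the semi-discrete scheme \eqref{eq3-11} and invoking the defining property \eqref{elliptic_projection} of $\Pi_N$, namely $a(\rho,v_N)=0$, yields the error equation
\[
(\theta_t,v_N)+a(\theta,v_N)=-(\rho_t,v_N)+\big(f(t,u,\nabla u)-f(t,u_N,\nabla u_N),v_N\big),\qquad \forall\,v_N\in X_N.
\]

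Next I would set $v_N=\theta$ and run a Gronwall energy argument. Lemma \ref{lemma4} supplies $a(\theta,\theta)=|\theta|_1^2$, while $|(\rho_t,\theta)|\le\|\rho_t\|_0\|\theta\|_0$. For the nonlinear term I would use the sharp local Lipschitz bound underlying Lemma \ref{locally_Lipschitz}: by the mean value theorem and Assumption \ref{assumption4}, as long as $u$ and $u_N$ remain bounded in $L^\infty$ together with their gradients, $\|f(t,u,\nabla u)-f(t,u_N,\nabla u_N)\|_0\lesssim\|u-u_N\|_0+|u-u_N|_1\lesssim\|\rho\|_1+\|\theta\|_0+|\theta|_1$. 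The borderline factor $|\theta|_1\,\|\theta\|_0$ is then absorbed into $\tfrac12|\theta|_1^2$ by Young's inequality, leaving
\[
\frac{d}{dt}\|\theta\|_0^2+|\theta|_1^2\lesssim\|\theta\|_0^2+\|\rho_t\|_0^2+\|\rho\|_1^2.
\]
Since $\theta(0)=\Pi_N u_0-P_N u_0$ obeys $\|\theta(0)\|_0\lesssim N^{-m}$ and the forcing is $\lesssim N^{-2m}$ by Theorem \ref{semi-discrete-error1}, Gronwall's lemma delivers $\|\theta(t)\|_0\lesssim N^{-m}$ on $[0,T]$. I would then recover the $H^1$ bound not from the energy identity but from the inverse inequality of Theorem \ref{inverse_inequality}, $\|\theta\|_1\lesssim N\|\theta\|_0\lesssim N^{1-m}$; combining with $\|\rho\|_0+N^{-1}\|\rho\|_1\lesssim N^{-m}$ through the triangle inequality then produces \eqref{semi-discrete-error-eq}.

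The main obstacle is precisely the a priori boundedness required to apply local Lipschitz continuity, since Lemma \ref{locally_Lipschitz} holds only inside the strip $\|u_N-u(t)\|_2\le R$, and for $d\le3$ the needed $L^\infty$ and $W^{1,\infty}$ control of $u_N$ comes from $H^2$ via Sobolev embedding. This is exactly where the hypothesis $N\ge N_0$ is consumed, and I would close the loop by a continuation (bootstrapping) argument in time: on the maximal subinterval where $\|u_N-u\|_2\le R$ the energy estimate above is valid and gives $\|\theta\|_0\lesssim N^{-m}$, whence $\|u-u_N\|_1\lesssim N^{1-m}$; applying the inverse inequality to $\theta$ (and to $I_Nu-u_N\in X_N$) together with the interpolation bound $\|u-I_Nu\|_2\lesssim N^{2-m}$ of Theorem \ref{interpolation-error} through the split \eqref{semi-discrete-split} shows $\|u-u_N\|_2\lesssim N^{2-m}<R$ once $N\ge N_0$, which strictly improves the standing assumption and extends the subinterval to all of $[0,T]$. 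The mild-growth Assumption \ref{assumption4} enters only to keep the mean-value linearization coefficients of $f$ uniformly bounded on this strip; once the strip is secured, the remaining steps are routine.
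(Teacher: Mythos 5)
Your skeleton matches the paper's: both split $u-u_N=\rho+\theta$ with the elliptic projection, derive the same error equation, run an energy/Gronwall argument on $\theta$, and close an a priori bound by a continuation argument in $N$. However, there are two concrete gaps. First, the rate bookkeeping does not close. Your Lipschitz bound $\|f(t,u,\nabla u)-f(t,u_N,\nabla u_N)\|_0\lesssim \|\rho\|_1+\|\theta\|_0+|\theta|_1$ puts $\|\rho\|_1^2$ into the Gronwall forcing, and Theorem \ref{semi-discrete-error1} gives only $\|\rho\|_1\lesssim N^{1-m}$ (it is $N^{-1}\|\rho\|_1$ that is $\lesssim N^{-m}$). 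So the forcing is $O(N^{2-2m})$, not $O(N^{-2m})$ as you assert, and Gronwall delivers only $\|\theta\|_0\lesssim N^{1-m}$ --- one order short of the claimed $L^2$ rate. The entire point of the paper's estimate \eqref{eq5-2} is that the nonlinear pairing is bounded by $\|u_N-u\|_0\,\|\nabla\theta\|_0$, i.e.\ only the $L^2$ norm of the error multiplies a factor that can be absorbed into the dissipation $\|\nabla\theta\|_0^2$ coming from $a(\theta,\theta)$, so that the residual forcing stays at $\|\rho\|_0^2+\|\rho_t\|_0^2=O(N^{-2m})$. Your route, as written, cannot reproduce \eqref{semi-discrete-error-eq}.

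Second, your a priori control is obtained from the wrong quantities. You justify the mean-value linearization by claiming that $L^\infty$ and $W^{1,\infty}$ bounds on $u_N$ follow from $\|u_N-u\|_2\le R$ ``via Sobolev embedding,'' but $H^2(\Omega)\not\hookrightarrow W^{1,\infty}(\Omega)$ for $d=2,3$: the gradient of $u_N$ is not pointwise controlled by its $H^2$ norm, so the coefficients $\partial f/\partial\eta$ along the segment are not uniformly bounded and the pointwise Lipschitz estimate fails. The paper circumvents this entirely: the only a priori quantity it needs is $\|\nabla u_N\|_0$, which it obtains from a separate energy identity with test function $\chi=2\theta_t$, and it then handles the nonlinearity through H\"older's inequality combined with the embeddings $H^1\hookrightarrow L^r$ under the mild-growth Assumption \ref{assumption4} --- this is precisely where the restriction $p\le 2$ for $d=3$ (with the exponent choice $q=r=6$) is consumed, a restriction your argument never uses and therefore cannot be respecting. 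Your continuation argument also needs $N^{2-m}<R$, which silently requires $m>2$, another hypothesis not present in the theorem. To repair the proof you would need to replace the $W^{1,\infty}$-based linearization by the paper's $L^q$/H\"older treatment and rework the nonlinear term so that only $\|u_N-u\|_0$ (not $|u_N-u|_1$) enters the forcing.
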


\begin{proof}
	Applying $\chi \in X_N$ to \eqref{eq3-11}, we can derive that
	\begin{align}
	\label{eq5-1}
	&(\theta_t,  \chi)+(\nabla \theta,  \nabla \chi)\nonumber\\
	=& (u_{N, t},  \chi)-({\Pi}_{N}u_t, \chi) + (\nabla u_N,  \nabla\chi)-(\nabla {\Pi}_{N}u, \nabla\chi)\nonumber\\
	=&(f(t,  u_N, \nabla u_N), \chi)-({\Pi}_{N}u_t,  \chi)-(\nabla u, \nabla \chi)\nonumber\\
	=&(f(t, u_N, \nabla u_N)-f(t, u, \nabla u), \chi) - (\rho_t,  \chi).
	\end{align}
	
	Then we will derive
	\begin{equation}
	\label{eq5-2}
	\Big| (f(t,u_N,\nabla u_N)-f(t,u,\nabla u),\theta)\Big| \lesssim \|u_N-u\|_0\|\nabla \theta\|_0.
	\end{equation}
	
	($\rm i$) When $d\leq 2$, due to the norm equivalence, we have
	\begin{equation}
	\label{eq5-3}\|\theta\|_{0,q,\Omega}\lesssim \|\theta\|_0,\quad \forall \ 2<q<\infty.
	\end{equation}
	According to H\"{o}lder inequality and \eqref{eq5-3}, taking $q^{-1}+(q')^{-1}=1$, then
	\begin{align*}
	&\Big|(f(t, u_N, \nabla u_N)-f(t, u, \nabla u), \theta) \Big|\\
	\lesssim& \|f(t, u_N, \nabla u_N)- f(t, u,  \nabla u)\|_{0, q', \Omega}\|\theta\|_{0, q, \Omega}\\
	\lesssim& \|f(t, u_N, \nabla u_N)-f(t, u, \nabla u)\|_{0, q', \Omega}\|\nabla \theta\|_0.
	\end{align*}
	
	Applying H\"{o}lder inequality again, since the function $f$ satisfies the Assumption \ref{assumption4}, we can derive that
	\begin{align*}
	&\|f(t, u_N, \nabla u_N)- f(t, u,  \nabla u)\|_{0, q',  \Omega}^{q'}\\
	\lesssim& \int_{\Omega} |u_N-u|^{q'}(1+| u|+| u_N|+|\nabla u|+|\nabla u_N|)^{pq'} \dd \bm x\\
	\lesssim& \Big(\int_{\Omega}|u_N-u|^2\dd \bm x \Big)^{\frac{q'}{2}}\Big(\int_{\Omega}(1+| u|+|u_N|+|\nabla u|+|\nabla u_N|)^{pq'r}\dd \bm x \Big)^{\frac 1 r}\\
	\lesssim& \|u_N- u\|_0^{q'}\Big(1+\|u_N\|_{0, r, \Omega}+\|u\|_{0, r, \Omega}+\|\nabla u_N\|_{0, r, \Omega}+\|\nabla u\|_{0, r, \Omega} \Big)^{pq'},
	\end{align*}
	where $r=\frac{2pq}{q-2}$. For a proper $q>0$, we have $2<r<\infty$, then $\| u_N\|_{0, r, \Omega}\lesssim \|\nabla u_N\|_0$. Furthermore, since $u(t)$ fulfills \eqref{regularity1} in the Assumption \ref{assumption6}, due to the norm equivalence on $X_N$, we can derive that
	\begin{align*}
	\|f(t, u_N, \nabla u_N)-f(t, u, \nabla u)\|_{0, q', \Omega}\lesssim \|u_N-u\|_0\Big(1+\|\nabla u_N\|_0 \Big)^p.
	\end{align*}
	
	Later, we will prove that when $N$ is sufficiently large, $\|\nabla u_N\|_0$ is bounded. Inserting $\chi=2\theta_t$ into \eqref{eq5-1}, we can derive that
	\begin{align}
	\label{eq5-4}
	(\theta_t, 2\theta_t)+a(\theta, 2\theta_t)=&\Big(f(t, u_N, \nabla u_N)-f(t, u, \nabla u), 2\theta_t \Big)\notag\\
	&\quad-(\rho_t, 2\theta_t).
	\end{align}
	
	Then \eqref{eq5-4} can be reformulated as
	\begin{align}
	\label{eq5-5}
	\frac{\dd }{\dd t}\|\nabla \theta\|_0^2&=(f(t, u_N, \nabla u_N)- f(t, u, \nabla u), 2\theta_t)-(\theta_t+\rho_t, 2\theta_t)\nonumber\\
	&\leq \|f(t, u_N, \nabla u_N)-f(t, u, \nabla u)\|_0^2+\|\rho_t\|_0^2.
	\end{align}
	
	By using the estimation of $\rho$ in \eqref{semi-discrete-error-eq1}, since the function $f$ satisfies the Assumption \ref{assumption4}, \eqref{eq5-5} can be rewritten as 
	\begin{align}
	\label{eq5-6}
	\frac{\dd}{\dd t}\|\nabla \theta\|_0^2 \lesssim& \|f(t, u_N, \nabla u_N)-f(t, \Pi_{N}u, \nabla (\Pi_{N}u))\|^2+N^{2-2m}\nonumber\\
	& \quad+ \|f(t,  \Pi_{N}u, \nabla (\Pi_{N}u))-f(t, u, \nabla u)\|^2.
	\end{align}
	
	As for the estimation of $\|f(t, \Pi_{N}u, \nabla (\Pi_{N}u))-f(t, u, \nabla u)\|_0$, since the function $f$ is mild growth (can also see the Assumption \ref{assumption4}) and the estimation of $\|\nabla \rho\|_0$ in \eqref{semi-discrete-error-eq1}, we can obtain that
	\begin{align}
	\label{eq5-7}
	&\|f(t, \Pi_{N}u, \nabla (\Pi_{N}u))-f(t, u, \nabla u)\|_0^2\notag\\
	\lesssim& \int_{\Omega} \Big(1+|\Pi_{N} u|+|\nabla \Pi_{N}u| \Big)^{2p}|\Pi_{N} u- u|^2 \dd \bm x\nonumber\\
	\lesssim& \Big(\int_{\Omega} \rho^q\dd \bm x \Big)^{\frac 2 q}\Big(\int_{\Omega}(1+|\Pi_{N} u|+|\nabla(\Pi_{N}u)|)^r\dd \bm x \Big)^{\frac{2p}{r}}\nonumber\\
	=& \|\rho\|_{0, q, \Omega}^2\Big\|1+\Pi_{N} u+\nabla(\Pi_{N} u)\Big\|_{0, r, \Omega}^{2p}\nonumber\\
	\lesssim& \|\nabla\rho\|_0^2 \lesssim N^{2-2m},
	\end{align}
	where the last two inequality uses the fact that when $2<r < \infty$, $\|\rho\|_{0, q, \Omega}\lesssim \|\nabla \rho\|_0$, $\|\Pi_{N} u\|_{0, r, \Omega}\lesssim \|\nabla(\Pi_{N} u)\|_0\lesssim 1$ and $\|\nabla (\Pi_{N} u)\|_{0, r, \Omega}\lesssim \|\nabla(\Pi_{N} u)\|_0\lesssim 1$.
	
	As for the estimation of  $\|f(t, u_N, \nabla u_N)-f(t, \Pi_{N}u, \nabla \Pi_{N} u)\|_0$, following the similar arguments of \eqref{eq5-7}, we can derive that
	\begin{align}
	\label{eq5-8}
	&\|f(t, u_N, \nabla u_N)-f(t,  \Pi_{N}u, \nabla (\Pi_{N}u))\|_0^2\nonumber\\
	\lesssim& \|\nabla \theta\|_0^2\|1+ u_N+\nabla u_N\|_{0, r, \Omega}^{2p}\nonumber\\
	\lesssim& \|\nabla \theta\|_0^2\Big(1+\|\nabla u_N\|_0\Big)^{2p}.
	\end{align}
	
	According to the definition of elliptic projector $\Pi_{N}$ in \eqref{elliptic}, we can know that
	\begin{equation}
	\label{eq5-9}
	\|\nabla( \Pi_{N} u)\|_0\lesssim \|\nabla u\|_0 \lesssim 1.
	\end{equation}
	
	Inserting \eqref{eq5-9} into \eqref{eq5-8}, we can obtain that
	\begin{align}
	\label{4-thm3-mid10}
	&\|f(t, u_N, \nabla u_N)-f(t, \Pi_{N}u, \nabla(\Pi_{N}u))\|_0^2\nonumber\\
	\lesssim& \|\nabla \theta\|_0^2\Big( 1+\|\nabla \theta\|_0+\|\nabla (\Pi_{N} u\|_0)\Big)^{2p}\nonumber\\
	\lesssim& \|\nabla\theta\|_0^2\Big(1+\|\nabla \theta \|_0 \Big)^{2p}.
	\end{align}
	
	Let $\bar t_N\in[0, T]$ is sufficiently large such that $\|\nabla\theta \|_0\leq 1$ when $t\in[0, \bar t_N]$. Then, 
	\begin{equation*}
	\frac{\dd}{\dd t}\|\nabla \theta\|_0^2\lesssim \|\nabla \theta\|_0^2+N^{2-2m}.
	\end{equation*}
	
	Then, for constants $C_1, C_2$ independent of $\bar t_N$, we have
	\begin{equation*}
	\|\nabla \theta\|_0\leq C_1e^{C_2\bar t_N}N^{1-m} \lesssim N^{1-m},
	\end{equation*}
	
	Therefore, we have finished the error estimate for $\|\nabla \theta\|_0$ in \eqref{semi-discrete-error-eq}.
	
	In the same way, we can derive that, for the whole time interval $[0,T]$,
	\begin{equation*}
	\|\nabla \theta\|_0\leq 1,  \quad \forall \ N \geq N_0.
	\end{equation*}
	Therefore,
	\begin{equation*}
	\|\nabla u_N\|_0\leq \|\nabla (\Pi_{N} u)\|_0+\|\nabla \theta\|_0\lesssim 1,
	\end{equation*}
	which implies that $\|\nabla u_N\|_0$ is uniformly bounded on $[0, T]$. Thus, when $d=2$, \eqref{eq5-2} is held. 
	
	($\rm ii$) When $d=3$. Taking $q=6$, then for $r=6$, we have $\|\chi \|_{0, r, \Omega}\lesssim \|\nabla \chi \|_0$.  Since $p\leq 2$, we have $r=\frac{2pq}{q-2}=3p\leq q$, and the following analysis is similar to $d=2$. Thus, \eqref{eq5-2} is also held for $d=3$.

	Taking $\chi=\theta$, \eqref{eq5-1} can be rewritten as
	\begin{align*}
	\frac 1 2\frac{\dd}{\dd t}\|\theta \|_0^2 \lesssim \|\theta\|_0^2 + \|\rho\|_0^2+\|\rho_t\|_0^2
	\end{align*}
	Since \eqref{eq5-2} is held, then by using the estimation of $\|\rho\|_0,  \|\rho_t\|_0$ in \eqref{semi-discrete-error-eq1}-\eqref{semi-discrete-error-eq2}, we can derive that
	\begin{equation*}
	\|\theta\|_0 \lesssim N^{-m}.
	\end{equation*}
	
	Combining with the estimation of $\rho$, $\theta$ and triangle inequality, we can obtain that
	\begin{equation*}
	\begin{split}
	&\|u(t)-u_N(t)\|_0 + N^{-1}\|u(t)-u_N(t)\|_1\\
	\lesssim &\|u(t)-\Pi_{N} u\|_0+\|\Pi_{N} u-u_N(t)\|_0\\
	&\quad+N^{-1}\|u(t)-\Pi_{N} u\|_1+N^{-1}\|\Pi_{N} u-u_N\|_1\lesssim N^{-m}.
	\end{split}
	\end{equation*}
	
\end{proof}

\begin{theorem}
	\label{elliptic_error}
	Assume the exact solution $u(t)$ fulfills \eqref{regularity1} in Assumption \ref{assumption6} with $m \geq 2$. Then the following error estimate is held,
	\begin{equation*}
	\|u(t)-u_N(t)\|_2 \lesssim N^{2-m}, \quad \forall \ t \in [0, T],
	\end{equation*}
	where the hidden constant is independent of $N$.
\end{theorem}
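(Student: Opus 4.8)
The plan is to split the error through the elliptic projection exactly as in \eqref{semi-discrete-split}, writing $u-u_N=\rho+\theta$ with $\rho=u-\Pi_N u$ and $\theta=\Pi_N u-u_N\in X_N$, and then to estimate the two pieces separately in the $H^2$-norm, finishing with the triangle inequality $\|u-u_N\|_2\le\|\rho\|_2+\|\theta\|_2$. The whole point of this splitting is that $\theta$ lives in the finite-dimensional space $X_N$, so its second-order norm can be controlled by a lower-order norm through the inverse inequality of Theorem \ref{inverse_inequality}, whereas $\rho$ is a pure approximation error for which an optimal bound is available directly.

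For the Galerkin part I would apply Theorem \ref{inverse_inequality} with $m=2$, $k=0$ to get $\|\theta\|_2\lesssim N^2\|\theta\|_0$, and then bound $\|\theta\|_0$ by a quantity we already control. By the triangle inequality $\|\theta\|_0\le\|\rho\|_0+\|u-u_N\|_0$, and both terms are $\lesssim N^{-m}$: the first by the standard projection estimate \eqref{semi-discrete-error-eq1} of Theorem \ref{semi-discrete-error1}, the second by the $L^2$ part of the semi-discrete estimate \eqref{semi-discrete-error-eq} already established in Theorem \ref{semi-discrete-error2}. Hence $\|\theta\|_2\lesssim N^2\cdot N^{-m}=N^{2-m}$, which is exactly the target order.

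For the projection part I would first observe that, because each Fourier mode $\phi_{\bm k}$ is an eigenfunction of $-\Delta$, the bilinear form $a(\cdot,\cdot)$ in \eqref{elliptic_projection} is diagonalized by the basis of $X_N$; consequently the elliptic projection $\Pi_N$ agrees with the truncation/$L^2$-projection $P_N$ on every nonconstant mode, and fixing the constant mode by the mean gives $\Pi_N u=P_N u$. With this identification, Lemma \ref{lemma3} applies verbatim and yields $\|\rho\|_2=\|u-P_N u\|_2\lesssim N^{2-m}\|u\|_m\lesssim N^{2-m}$, where the last step uses the regularity bound \eqref{regularity1}. Combining the two estimates through the triangle inequality completes the proof.

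The only genuinely delicate step is the $H^2$ bound for $\rho$: unlike $\theta$, the quantity $\rho$ does not lie in $X_N$, so the inverse inequality is unavailable and one truly needs an approximation estimate in the full $H^2$-norm, whereas the earlier theorems only supply $\|\rho\|_0$ and $\|\rho\|_1$. The identification $\Pi_N=P_N$ is what unlocks Lemma \ref{lemma3}; alternatively one may argue directly by Parseval, since $\|u-P_N u\|_2^2\eqsim\sum_{\bm k\notin\Lambda_N}(1+|\bm k|^2)^2|\widehat{u}_{\bm k}|^2\lesssim N^{2(2-m)}\sum_{\bm k\notin\Lambda_N}(1+|\bm k|^2)^m|\widehat{u}_{\bm k}|^2\lesssim N^{2(2-m)}\|u\|_m^2$, using that every discarded mode satisfies $|\bm k|\gtrsim N$ together with $2-m\le0$. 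Everything else is a mechanical combination of the inverse inequality with the already-proved $L^2$ error bound.
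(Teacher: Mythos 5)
Your proof is correct and follows essentially the same route as the paper: split the error through an approximant in $X_N$, bound the approximation piece by a direct $H^2$ estimate, and lift the discrete piece to $H^2$ via the inverse inequality combined with the already-established semi-discrete error bounds of Theorems \ref{semi-discrete-error1} and \ref{semi-discrete-error2}. The only cosmetic difference is that the paper routes through the interpolant $I_N$ and the one-order inverse inequality $\|\cdot\|_2\lesssim N\|\cdot\|_1$ together with the $H^1$ error estimate, whereas you route through $\Pi_N=P_N$ and $\|\cdot\|_2\lesssim N^2\|\cdot\|_0$ together with the $L^2$ estimate; both give the same $N^{2-m}$ rate.
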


\begin{proof}
	By using the triangle inequality,
	\begin{equation}
	\label{mid1}
	\|u(t)-u_N(t)\|_2 \leq \|u(t)-I_Nu(t)\|_2 + \|I_Nu(t)-u_N(t)\|_2,
	\end{equation}
	where $I_N$ is the interpolation operator on $X_N$.
	
	Recalling the Theorem \ref{interpolation-error}, we can derive the interpolation error as follows:
	\begin{equation}
	\label{mid2}
	\|u-I_Nu\|_2 \lesssim N^{2-m}\|u\|_m.
	\end{equation}
	
	With the help of the Theorem \ref{inverse_inequality}, Theorem \ref{semi-discrete-error1} and Lemma 4.3 in \cite{LiuSun2016}, we can derive that
	\begin{align}
	\label{mid3}
	\|I_Nu(t)-u_N(t)\|_2 &\lesssim N\|I_Nu(t)-u_N(t)\|_1 \nonumber\\
	&\leq N\|I_Nu(t) - u(t)\|_1 + N\|u(t)-u_N(t)\|_1\nonumber\\
	&\lesssim N^{2-m}.
	\end{align}
	
	Inserting \eqref{mid2} and \eqref{mid3} to \eqref{mid1}, we can easily derive the conclusion.

\end{proof}

\subsection{Fully-discrete error estimate}\label{EIFG-fully-discrete}

In this subsection, we will derive the fully-discrete error estimate of EIFG method. For simplicity, we will assume the time partition is uniform, i.e., $\Delta t = \tau_0 = \cdots = \tau_{N_T-1}$ and $t_n=n\Delta t$. Let $u_N(t)$ be the solution of the semi-discrete (in space) problem \eqref{semi-discretization} (or \eqref{Duhamel}), and $\{\widehat u_N^n \}$ be the fully-discrete solution produced by the EIFG method \eqref{explicit_ETD} and $u_N^n=\ifft(\widehat u_N^n) $. For the error between the exact solution $u(t)$ and the fully-discrete solution $\{u_N^n \}$ measured in the $H^2$-norm, we will derive the error into two parts. By the triangle inequality,
\begin{equation}
\label{fully-discrete-error}
\|u(t_n)-u_N^n\|_2 \leq \|u(t_n)-u_N(t_n)\|_2+\|u_N(t_n)-u_N^n\|_2,
\end{equation}
and the semi-discrete error has been proposed in Theorem \ref{semi-discrete-error2}. Then we will focus on the estimate of $\|u_N(t_n)-u_N^n\|_2$ by following the similar arguments of \cite{HochbruckOstermann2005a,HochbruckOstermann2005b,HuangJu2023}.

According to our previous work \cite{HuangJu2023}, it's crucial to remove the dependence of the hidden constants on $N$ (the number of spatial meshes) in estimating $\|u_N(t_n)-u_N^n\|_2$. In order to achieve this goal, we convert the semi-discrete solution $u_h(t_{n+1})$ ($n=0,1,\cdots,N_T-1$) into the sum of the following two parts for further analysis:
\begin{equation}
\label{eq1-6}
\begin{split}
\widehat u_N(t_{n+1}) =\;& e^{-\Delta t \widehat L_N }\widehat u_N(t_n)+\int_{0}^{\Delta t } e^{-(\Delta t  - \sigma)\widehat L_N}\widehat P_N\widehat g(t_n+\sigma, u_N(t_n+\sigma))\dd \sigma\\
=\;& e^{-\Delta t  \widehat L_N}\widehat u_N(t_n) + \int_0^{\Delta t } e^{-(\Delta t  - \sigma)\widehat L_N}\widehat P_N\widehat g(t_n+\sigma, u(t_n+\sigma)) \dd \sigma\\
&\hspace{-0.2cm}+\int_0^{\Delta t } e^{-(\Delta t  - \sigma)\widehat L_N}\Big(\widehat P_N\widehat g(t_n+\sigma, u_N(t_n+\sigma))-\widehat P_N\widehat g(t_n+\sigma, u(t_n+\sigma)) \Big) \dd \sigma.
\end{split}
\end{equation}

Define the following functions:
\begin{equation}
\left\{
\label{function}
\begin{split}
\psi_i(-\Delta t \widehat L_N) &= \varphi_i(-\Delta t \widehat L_N)-\textstyle\sum\limits_{k=1}^s b_k(-\Delta t \widehat L_N)\frac{c_k^{i-1}}{(i-1)!}, \quad i=1, \cdots, s,\\
\psi_{j, i}(-\Delta t \widehat L_N) &= \varphi_j(-c_i\Delta t \widehat L_N)c_i^j - \textstyle\sum\limits_{k=1}^{i-1} a_{ik}(-\Delta t \widehat L_N)\frac{c_k^{j-1}}{(j-1)!}, \quad i,j =1, \cdots, s.
\end{split}
\right.
\end{equation}
We also denote $g^{(k)}(t, u(t)) = \frac{\dd^k}{\dd t^k}g(t,u(t))=\frac{\dd^k}{\dd t^k}f(t,u(t),\nabla u(t))$ as the $k$-th full differentiation of $g$ with respect to $t$ and $\widehat g^{(k)}$ is the Fourier transformation result of $g^{(k)}$, i.e., $\widehat g^{(k)}=\text{fft}(g^{(k)})$. By comparing (\ref{eq1-6}) with the fully-discrete scheme \eqref{explicit_ETD}, we then obtain
\begin{eqnarray}
\widehat u_N(t_n+c_i\Delta t )&= &e^{-c_i \Delta t  \widehat L_N}\widehat u_N(t_n)+\Delta t  \textstyle\sum\limits_{j=1}^{i-1} a_{ij}(-\Delta t  \widehat L_N)\nonumber\\
&& \qquad \cdot \widehat P_N\widehat g(t_n+c_j \Delta t , u(t_n+c_j\Delta t )) + \widehat \delta_{ni},\label{111}\\
\widehat u_N(t_{n+1}) &= &e^{-\Delta t  \widehat L_N}\widehat u_N(t_n)+\Delta t  \textstyle\sum\limits_{i=1}^s b_i(-\Delta t \widehat L_N)\nonumber\\
&&\qquad \cdot \widehat P_N\widehat g(t_n+c_i\Delta t , u(t_n+c_i\Delta t )) + \widehat \delta_{n+1},\label{222}
\end{eqnarray}
where the defect terms $\{\widehat \delta_{ni}\}_{i=1}^s$ and  $\widehat \delta_{n+1}$ are respectively given by
\begin{equation*}
\begin{split}
\widehat \delta_{ni} &= \textstyle\sum\limits_{j=1}^{r} \Delta t ^j \psi_{j,i}(-\Delta t  \widehat L_N)\widehat P_N\widehat g^{(j-1)}(t_n,u(t_n))+\widehat \delta_{ni}^{[r]},\\
\widehat \delta_{n+1} &= \textstyle\sum\limits_{i=1}^r \Delta t ^i \psi_i(-\Delta t  \widehat L_N)\widehat P_N\widehat g^{(i-1)}(t_n,u(t_n))+\widehat \delta_{n+1}^{[r]},\\
\end{split}
\end{equation*}
with the remainders $\widehat \delta_{ni}^{[r]}$ and $\widehat \delta_{n+1}^{[r]}$ defined respectively by
\begin{equation*}
\begin{split}
\widehat \delta_{ni}^{[r]}=&\int_0 ^{c_i \Delta t }e^{-(c_i\Delta t  - \tau)\widehat L_N}\int_0^{\tau}\frac{(\tau-\sigma)^{r-1}}{(r-1)!}\widehat P_N\widehat g^{(r)}(t_n+\sigma, u(t_n+\sigma))\dd \sigma \dd \tau \\
&- \Delta t \sum\limits_{k=1}^{i-1}a_{ik}(-\Delta t  \widehat L_N)\displaystyle\int_0^{c_k\Delta t }\frac{(c_k\Delta t -\sigma)^{r-1}}{(r-1)!}\widehat P_N\widehat g^{(r)}(t_n+\sigma, u(t_n+\sigma))\dd \sigma\\
&+ \int_0^{c_i \Delta t } e^{-(c_i\Delta t -\sigma)\widehat L_N}\Big(\widehat P_N\widehat g(t_n+\sigma,u_N(t_n+\sigma))-\widehat P_N\widehat g(t_n+\sigma, u(t_n+\sigma)) \Big)\dd \sigma,
\end{split}
\end{equation*}
\begin{equation*}
\begin{split}
\widehat \delta_{n+1}^{[r]} =& \int_0 ^{\Delta t }e^{-(\Delta t  - \tau)\widehat L_N}\int_0^{\tau}\frac{(\tau-\sigma)^{r-1}}{(r-1)!}\widehat P_N\widehat g^{(r)}(t_n+\sigma, u(t_n+\sigma))\dd \sigma \dd \tau \\
&- \Delta t  \sum\limits_{i=1}^s b_i(-\Delta t  \widehat L_N)\displaystyle\int_0^{c_i\Delta t }\frac{(c_i\Delta t -\sigma)^{r-1}}{(r-1)!}\widehat P_N\widehat g^{(r)}(t_n+\sigma,u(t_n+\sigma))\dd \sigma\\
&+ \int_0^{\Delta t }e^{-(\Delta t -\sigma)\widehat L_N}\Big(\widehat P_N\widehat g(t_n+\sigma, u_N(t_n+\sigma))-\widehat P_N\widehat g(t_n+\sigma, u(t_n+\sigma)) \Big)\dd \sigma.
\end{split}
\end{equation*}
Here $r$ can be any nonnegative integers such that $g^{(r)}(t, u(t))$  exists and is continuous. In the same way, the defect terms defined on the time domain is shown as follows:
\begin{equation*}
\begin{split}
\delta_{ni} &= \textstyle\sum\limits_{j=1}^{r} \Delta t ^j \psi_{j,i}(-\Delta t L_N) P_N g^{(j-1)}(t_n,u(t_n))+\delta_{ni}^{[r]},\\
\delta_{n+1} &= \textstyle\sum\limits_{i=1}^r \Delta t ^i \psi_i(-\Delta t  L_N) P_N g^{(i-1)}(t_n,u(t_n))+\delta_{n+1}^{[r]},\\
\end{split}
\end{equation*}
with the remainders $\delta_{ni}^{[r]}$ and $\delta_{n+1}^{[r]}$ defined respectively by
\begin{equation*}
\begin{split}
\delta_{ni}^{[r]}=&\int_0 ^{c_i \Delta t }e^{-(c_i\Delta t  - \tau)L_N}\int_0^{\tau}\frac{(\tau-\sigma)^{r-1}}{(r-1)!}P_N g^{(r)}(t_n+\sigma, u(t_n+\sigma))\dd \sigma \dd \tau \\
&- \Delta t \sum\limits_{k=1}^{i-1}a_{ik}(-\Delta t  L_N)\displaystyle\int_0^{c_k\Delta t }\frac{(c_k\Delta t -\sigma)^{r-1}}{(r-1)!}P_N g^{(r)}(t_n+\sigma, u(t_n+\sigma))\dd \sigma\\
&+ \int_0^{c_i \Delta t } e^{-(c_i\Delta t -\sigma) L_N}\Big(P_N g(t_n+\sigma,u_N(t_n+\sigma))-P_N g(t_n+\sigma, u(t_n+\sigma)) \Big)\dd \sigma,
\end{split}
\end{equation*}
\begin{equation*}
\begin{split}
\delta_{n+1}^{[r]} =& \int_0 ^{\Delta t }e^{-(\Delta t  - \tau) L_N}\int_0^{\tau}\frac{(\tau-\sigma)^{r-1}}{(r-1)!}P_Ng^{(r)}(t_n+\sigma, u(t_n+\sigma))\dd \sigma \dd \tau \\
&- \Delta t  \sum\limits_{i=1}^s b_i(-\Delta t L_N)\displaystyle\int_0^{c_i\Delta t }\frac{(c_i\Delta t -\sigma)^{r-1}}{(r-1)!}P_N g^{(r)}(t_n+\sigma,u(t_n+\sigma))\dd \sigma\\
&+ \int_0^{\Delta t }e^{-(\Delta t -\sigma) L_N}\Big(P_Ng(t_n+\sigma, u_N(t_n+\sigma))- P_N g(t_n+\sigma, u(t_n+\sigma)) \Big)\dd \sigma.
\end{split}
\end{equation*}

In what follows, we will adopt the arguments proposed in  \cite{HochbruckOstermann2005b} to bound $\|u_N^n-u_N(t_n)\|_2$. For brevity, let us define  $\widehat e_n = \widehat u_N^n-\widehat u_N(t_n), e_n=u_N^n-u_N(t_n)$
and  $\widehat E_{ni}=\widehat U_{ni}-\widehat u_N(t_n+c_i\Delta t ),E_{ni}=U_{ni}-u_N(t_n+c_i\Delta t)$ for $i=1,\cdots,s$.
Then we arrive at the following recurrence relations:
\begin{eqnarray}
\widehat E_{ni} =& e^{-c_i\Delta t \widehat L_N}\widehat e_n+
\Delta t \textstyle\sum\limits_{j=1}^{i-1} a_{ij}(-\Delta t \widehat L_N)\Big(\widehat P_N\widehat g(t_n+c_j\Delta t ,U_{nj})\nonumber\\
&\quad-\widehat P_N\widehat g(t_n+c_j\Delta t ,u(t_n+c_j\Delta t ))\Big)-\widehat \delta_{ni}. \label{recursion-E}\\
\widehat e_{n+1} =& e^{-\Delta t \widehat L_N}\widehat e_n+\Delta t \textstyle\sum\limits_{i=1}^s b_i(-\Delta t \widehat L_N)\Big(\widehat P_N\widehat g(t_n+c_i\Delta t ,U_{ni})
\nonumber\\&\quad -\widehat P_N\widehat g(t_n+c_i\Delta t ,u(t_n+c_i\Delta t ))\Big)-\widehat \delta_{n+1},\label{recursion-e}
\end{eqnarray}

We first have the following result on the defect terms in \eqref{111} and \eqref{222}.

\begin{lemma}
	\label{lemma6}  Given an integer $r=1$ or $2$.
	Suppose that the function $f$ satisfies Assumptions \ref{assumption4} and \ref{assumption5}, and  the exact solution $u(t)$ fulfills \eqref{regularity1} and \eqref{regularity2} in Assumption \ref{assumption6}. Suppose that $u(t)$ additionally  fulfills \eqref{regularity3}  if $r=2$.
	Then for $n=0,\cdots,N_T$, $i = 1, \cdots, s$, it holds that
	\begin{subequations}\label{lemma4_eq}
		\begin{align}
		\textstyle\|\delta_{ni}^{[r]}\|_2 \lesssim (\Delta t )^{r+1}\textstyle\sup\limits_{0\leq \eta \leq 1}\|f^{(r)}(t_n+\eta \Delta t ,u(t_n+\eta \Delta t ),\nabla u(t_n+\eta\Delta t))\|_2 + N^{2-m},\label{lemma4-1}\\
		\textstyle\Big\|\textstyle\sum\limits_{j=0}^{n-1}e^{-j\Delta t  L_N}\delta_{n-j}^{[r]}\Big\|_2 \lesssim (\Delta t)^r \textstyle\sup\limits_{ 0 \leq t \leq T } \| f^{(r)}(t,u(t),\nabla u(t))\|_2 +N^{2-m}.\label{lemma4-2}
		\end{align}
		Note that the above hidden constants are independent of $N$ and $\Delta t$.	
	\end{subequations}
\end{lemma}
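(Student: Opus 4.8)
The plan is to exploit the common three–part structure of both defects: each of $\delta_{ni}^{[r]}$ and the full–step defect $\delta_m^{[r]}$ splits into two \emph{Taylor remainder} integrals carrying $g^{(r)}=f^{(r)}$ and one \emph{consistency} integral carrying the semi-discrete difference $P_N\big(g(t,u_N)-g(t,u)\big)$. The assumptions guarantee this is legitimate: Assumption \ref{assumption5} together with \eqref{regularity2} (and \eqref{regularity3} when $r=2$) ensures $g^{(r)}=f^{(r)}$ exists, is continuous, and $\sup\|f^{(r)}\|_2$ is finite, so the integral-remainder representation defining the defects is valid. The unifying observation is that $L_N$, the semigroup $e^{-tL_N}$, the weight operators $a_{ij}(-\Delta t L_N)$, $b_i(-\Delta t L_N)$ and the projector $P_N$ are all Fourier multipliers on $X_N$ and hence commute; combined with the norm equivalence $\|v\|_2\eqsim\|L_N v\|_0$ from \eqref{equal_norm}, every $\|\cdot\|_2$-estimate reduces to placing $L_N$ (or $L_N^{1/2}$) on whichever factor is most convenient.

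For \eqref{lemma4-1} I would bound the three parts of $\delta_{ni}^{[r]}$ in turn. In the two Taylor parts I commute $L_N$ onto $P_N g^{(r)}$, using $\|L_N P_N g^{(r)}\|_0\le\|g^{(r)}\|_2$ (Parseval, exactly as in the $L^2$/$H^1$ stability of $P_N$), and then absorb the exponential and weight operators by $\|e^{-tL_N}\|_0\lesssim1$ and $\|a_{ij}(-\Delta t L_N)\|_0\lesssim1$ from Lemma \ref{lemma2}(i) and (iii). The elementary integrals $\int_0^{c_i\Delta t}\!\int_0^{\tau}(\tau-\sigma)^{r-1}\dd\sigma\dd\tau$ and $\Delta t\int_0^{c_k\Delta t}(c_k\Delta t-\sigma)^{r-1}\dd\sigma$ both contribute $O((\Delta t)^{r+1})$, giving the $(\Delta t)^{r+1}\sup\|f^{(r)}\|_2$ term. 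In the consistency part I split $L_N=L_N^{1/2}L_N^{1/2}$, send one factor onto the difference so that $\|L_N^{1/2}P_N(g(t,u_N)-g(t,u))\|_0=\|P_N(g(t,u_N)-g(t,u))\|_1\lesssim\|u_N-u\|_2\lesssim N^{2-m}$ by Lemma \ref{locally_Lipschitz} (estimate \eqref{locally_Lipschitz_eq2}) and Theorem \ref{elliptic_error}, and bound the remaining singular factor by $\|L_N^{1/2}e^{-(c_i\Delta t-\sigma)L_N}\|_0\lesssim(c_i\Delta t-\sigma)^{-1/2}$ from Lemma \ref{lemma2}(i) with $\gamma=\tfrac12$. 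Since $\int_0^{c_i\Delta t}(c_i\Delta t-\sigma)^{-1/2}\dd\sigma\lesssim(\Delta t)^{1/2}\lesssim1$, this produces the $N^{2-m}$ term. Here Lemma \ref{locally_Lipschitz} applies uniformly in $t$ because the strip condition holds: for $N\ge N_0$, $\|u_N(t)-u(t)\|_2\lesssim N^{2-m}$ stays below any prescribed $R$ by Theorem \ref{elliptic_error}.

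For \eqref{lemma4-2} the same decomposition applies to each $\delta_{n-j}^{[r]}$, the issue now being the summation. The two Taylor parts are handled crudely: each is $O((\Delta t)^{r+1}\sup\|f^{(r)}\|_2)$ in $\|\cdot\|_2$, $e^{-j\Delta t L_N}$ is a contraction on $H^2$, and there are at most $N_T=T/\Delta t$ summands, so their sum is $O(N_T(\Delta t)^{r+1})=O((\Delta t)^r)$. The delicate term is the accumulated consistency part, for which the naive bound $N_T\cdot(\Delta t)^{1/2}N^{2-m}=T(\Delta t)^{-1/2}N^{2-m}$ diverges. I would first bound $\|L_N^{1/2}(\text{consistency part of }\delta_m^{[r]})\|_0\lesssim\Delta t\,N^{2-m}$, now putting $L_N^{1/2}$ on the difference and using $\|e^{-(\Delta t-\sigma)L_N}\|_0\lesssim1$. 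Then, writing $\|\cdot\|_2=\|L_N\cdot\|_0$ and distributing $L_N=L_N^{1/2}L_N^{1/2}$, the $j\ge1$ part of the sum is controlled by $\sup_m\|L_N^{1/2}(\cdots)\|_0\cdot\sum_{j=1}^{n-1}\|L_N^{1/2}e^{-j\Delta t L_N}\|_0$, where $\sum_{j=1}^{n-1}\|L_N^{1/2}e^{-j\Delta t L_N}\|_0\lesssim\sum_{j=1}^{n-1}(j\Delta t)^{-1/2}\lesssim(\Delta t)^{-1}$ (equivalently $\|\Delta t\,L_N^{1/2}\sum_{j=1}^{n-1}e^{-j\Delta t L_N}\|_0\lesssim1$ by Lemma \ref{lemma2}(ii) with $\gamma=\tfrac12$). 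The factors combine to $\Delta t\,N^{2-m}\cdot(\Delta t)^{-1}=N^{2-m}$, while the $j=0$ term is bounded exactly as in \eqref{lemma4-1}.

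The main obstacle is precisely this accumulated consistency estimate, where the $N$-independence of the hidden constants is won: summing the semi-discrete-error contributions with only $\|e^{-tL_N}\|_0\lesssim1$ loses a factor $(\Delta t)^{-1/2}$ and blows up $N^{2-m}$. The resolution is to spend the half-derivative $L_N^{1/2}$ on the exponential factors, producing the summable weight $(j\Delta t)^{-1/2}$ that telescopes through Lemma \ref{lemma2}(ii), thereby converting the apparent loss into a bounded factor. The remaining care needed is to ensure Lemma \ref{locally_Lipschitz} is applicable uniformly in $t$ and $\sigma$, which follows from the uniform semi-discrete bound of Theorem \ref{elliptic_error}, and to separate the $j=0$ summand, for which no smoothing from $e^{-j\Delta t L_N}$ is available.
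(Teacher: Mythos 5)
Your proposal is correct and follows essentially the same route as the paper: the same three-part decomposition of each defect into two Taylor-remainder integrals and one consistency integral, the same semigroup and weight-operator bounds from Lemma \ref{lemma2}, and the same combination of the locally-Lipschitz property of $\widehat g$ with the semi-discrete $H^2$ error bound to produce the $N^{2-m}$ term. The only variations are minor: the paper controls the consistency integral by evaluating $\bigl\|\int_0^{\Delta t}L_Ne^{-(\Delta t-\sigma)L_N}\dd\sigma\bigr\|_0=\max_{\lambda}|1-e^{-\Delta t\lambda}|\lesssim 1$ exactly rather than splitting off $L_N^{1/2}$ and paying a harmless $(\Delta t)^{1/2}$, and it dispatches \eqref{lemma4-2} with the remark that it follows "in the similar manner," whereas you correctly supply the missing step --- Lemma \ref{lemma2}(ii) with $\gamma=\tfrac12$ to sum the accumulated consistency parts without losing a factor of $(\Delta t)^{-1/2}$.
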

\begin{proof}
	Recalling \eqref{lemma2-1} in Lemma \ref{lemma2} and the relation \eqref{equal_norm}, we have after some direct manipulations
	\begin{equation}
	\begin{split}
	&\Big\|\widehat L_N\int_0^{c_i\Delta t }e^{-(c_i\Delta t -\tau)\widehat L_N}\int_0^{\tau}\frac{(\tau-\sigma)^{r-1}}{(r-1)!}\widehat P_N\widehat g^{(r)}(t_n+\sigma,u(t_n+\sigma))\dd \sigma \dd \tau\Big\|_{\ell^2}\\
	&\;\lesssim  \textstyle(\Delta t)^{r+1}\sup\limits_{0 \leq \tau \leq c_i\Delta t }\|e^{-(c_i\Delta t -\tau) L_N}\|_0 \textstyle\sup\limits_{0\leq \eta \leq 1}\|\widehat L_N \widehat P_N\widehat g^{(r)}(t_n+\eta \Delta t ,u(t_n+\eta \Delta t ))\|_{\ell^2}\\
	&\;\lesssim (\Delta t) ^{r+1}\textstyle\sup\limits_{0\leq \eta \leq 1}\| g^{(r)}(t_n+\eta \Delta t,u(t_n+\eta \Delta t ))\|_2\\
	&\;\lesssim (\Delta t) ^{r+1}\textstyle\sup\limits_{0\leq \eta \leq 1}\|f^{(r)}(t_n+\eta \Delta t ,u(t_n+\eta \Delta t ),\nabla u(t_n+\eta\Delta t))\|_2,
	\end{split}
	\label{mid-estimate1}
	\end{equation}
	where the last two inequality is due to that $\widehat P_N$ is $\ell^2$-stable.	Similarly, we also have
	\begin{equation}
	\begin{split}
	&\Big\|\widehat L_N\int_0^{\Delta t }e^{-(\Delta t -\sigma)\widehat L_N}\Big(\widehat P_N\widehat g(t_n+\sigma,u_N(t_n+\sigma))-\widehat P_N\widehat g(t_n+\sigma,u(t_n+\sigma)) \Big)\dd \sigma\Big\|_{\ell^2}
	\\
	&\lesssim \Big\|\int_0^{\Delta t }L_Ne^{-(\Delta t -\sigma)L_N}\dd \sigma\Big\|_0 \textstyle\sup\limits_{0 \leq \eta \leq 1} \Big\|\widehat P_N\widehat g(t_n+\eta \Delta t ,u_N(t_n+\eta \Delta t ))
	\\
	&\quad -\widehat P_N\widehat g(t_n+\eta\Delta t ,u(t_n+\eta\Delta t ))\Big\|_{\ell^2}.
	\end{split}
	\label{eq4-3}
	\end{equation}
	
	In view of the similar arguments for proving Lemma \ref{lemma3}, we have
	\begin{equation}
	\begin{split}
	\Big\|\int_0^{\Delta t }L_Ne^{-(\Delta t -\sigma)L_N}\dd \sigma \Big\|_0 &= \max\limits_{\lambda \in \lambda(L_N)} \Big|\int_0^{\Delta t }\lambda e^{-(\Delta t -\sigma)\lambda}\dd \sigma \Big| \\
	&\leq\max\limits_{\lambda \in \lambda(L_N)} |1-e^{-\Delta t\lambda}|\lesssim 1.
	\end{split}
	\label{eq4-4a}
	\end{equation}
	Since $P_N$ is $L^2$-stable and the function $\widehat g$ is locally-Lipschitz continuous (Lemma \ref{locally_Lipschitz}), we further obtain from \eqref{eq4-3}-\eqref{eq4-4a} and Theorem \ref{semi-discrete-error2} that
	\begin{equation}
	\begin{split}
	&\Big\|\widehat L_N \int_0^{\Delta t }e^{-(\Delta t -\sigma)\widehat L_N}\Big(\widehat P_N\widehat g(t_n+\sigma,u_N(t_n+\sigma))-\widehat P_N\widehat g(t_n+\sigma,u(t_n+\sigma)) \Big)\dd \sigma\Big\|_{\ell^2}
	\\
	&\lesssim \sup\limits_{0 \leq \eta \leq 1} \|\widehat P_N\widehat g(t_n+\eta \Delta t ,u_N(t_n+\eta \Delta t ))-\widehat P_N\widehat g(t_n+\eta \Delta t ,u(t_n+\eta \Delta t ))\|_{\ell^2}
	\\
	&\lesssim \sup\limits_{0 \leq \eta \leq 1}\|u_N(t_n+\eta \Delta t )-u(t_n+\eta \Delta t )\|_2\lesssim N^{2-m}.
	\end{split}
	\label{mid-estimate2}
	\end{equation}
	According to \eqref{lemma2-3} in Lemma \ref{lemma2} and the similar arguments for deriving \eqref{mid-estimate1},
	\begin{equation}
	\begin{split}
	\quad&\Big\|\widehat L_N\Delta t \sum\limits_{k=1}^{i-1} a_{ik}(-\Delta t \widehat L_N)\int_0^{c_k\Delta t }\frac{(c_k\Delta t -\sigma)^{r-1}}{(r-1)!}\widehat P_N\widehat g^{(r)}(t_n+\sigma,u(t_n+\sigma))\dd \sigma\Big\|_{\ell^2}
	\\
	&\lesssim \Big\|\Delta t \sum\limits_{k=1}^{i-1} a_{ik}(-\Delta t \widehat L_N)\int_0^{c_k\Delta t }\frac{(c_k\Delta t -\sigma)^{r-1}}{(r-1)!}\widehat L_N\\
	&\qquad\quad \cdot \widehat P_N\widehat g^{(r)}(t_n+\sigma,u(t_n+\sigma))\dd \sigma\Big\|_{\ell^2}
	\\
	&\leq  (\Delta t)^{r+1} \sum\limits_{k=1}^{i-1}\|a_{ik}(-\Delta t L_N)\|_0\sup\limits_{0 \leq \eta \leq 1}\|g^{(r)}(t_n+\eta \Delta t ,u(t_n+\eta \Delta t ))\|_2 \\
	&\lesssim (\Delta t)^{r+1}\sup\limits_{0 \leq \eta \leq 1}\| f^{(r)}(t_n+\eta \Delta t ,u(t_n+\eta \Delta t ),\nabla u(t_n+\eta\Delta t))\|_2.
	\end{split}
	\label{mid-estimate3}
	\end{equation}
	
	Now, using the triangle inequality, the regularity assumptions for $u(t)$ and $f$, and the estimates \eqref{mid-estimate1}, \eqref{mid-estimate2} and \eqref{mid-estimate3}, we get
	\begin{equation}
	\|\delta_{ni}^{[r]}\|_2 \lesssim (\Delta t )^{r+1}\sup\limits_{0 \leq \eta \leq 1}\|f^{(r)}(t_n+\eta \Delta t ,u(t_n+\eta \Delta t ),\nabla u(t_n+\eta\Delta t))\|_2+ N^{2-m},\quad\forall\, i=1, \cdots, s,
	\end{equation}
	which leads to \eqref{lemma4-1}.  Also \eqref{lemma4-2} can be derived in the similar manner.
	
\end{proof}

\begin{lemma}
	\label{lemma8}
	Suppose the function $f$ satisfies Assumptions \ref{assumption4} and \ref{assumption5}, and  the exact solution $u(t)$ fulfills \eqref{regularity1}-\eqref{regularity2}. If $s\geq 2$, then it holds for any  $0 \leq n < N_T$,
	\begin{equation}
	\label{E_n}
	\|E_{ni}\|_2 \lesssim \|e_n\|_2 + (\Delta t)^2 \textstyle\sup\limits_{0\leq \eta \leq 1} \|f'(t_n+\eta \Delta t ,u(t_n+\eta \Delta t ), \nabla u(t_n+\eta\Delta t)\|_2 + N^{2-m}, \quad \forall\, i=1, \cdots, s,
	\end{equation}
	where the hidden constant is independent of $N$ and $\Delta t$.
\end{lemma}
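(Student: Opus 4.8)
The plan is to estimate the recursion \eqref{recursion-E} for $\widehat E_{ni}$ directly in the frequency space, exploiting the norm equivalence $\|v\|_2\eqsim\|\widehat L_N\widehat v\|_{\ell^2}$ from \eqref{equal_norm}. Applying $\widehat L_N$ to \eqref{recursion-E} and using the triangle inequality, I would split $\|E_{ni}\|_2\eqsim\|\widehat L_N\widehat E_{ni}\|_{\ell^2}$ into three contributions: the homogeneous term $\widehat L_N e^{-c_i\Delta t\widehat L_N}\widehat e_n$, the nonlinear quadrature sum, and the defect $\widehat\delta_{ni}$. The first is immediate: since $e^{-c_i\Delta t\widehat L_N}$ commutes with $\widehat L_N$, part (i) of Lemma \ref{lemma2} gives $\|\widehat L_N e^{-c_i\Delta t\widehat L_N}\widehat e_n\|_{\ell^2}\le\|e^{-c_i\Delta t L_N}\|_0\,\|\widehat L_N\widehat e_n\|_{\ell^2}\lesssim\|e_n\|_2$.

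For the nonlinear sum, the crucial point (and the one that keeps all constants independent of $N$) is to transfer the factor $\widehat L_N$ onto the coefficient operators $a_{ij}(-\Delta t\widehat L_N)$ rather than onto the nonlinear difference. Writing $\Delta t\,\widehat L_N a_{ij}(-\Delta t\widehat L_N)=(\Delta t\,\widehat L_N)\,a_{ij}(-\Delta t\widehat L_N)$ and invoking part (iii) of Lemma \ref{lemma2} with $\gamma=1$ yields $\|\Delta t\,\widehat L_N a_{ij}(-\Delta t\widehat L_N)\|_0\lesssim1$, so each summand is bounded in $\ell^2$ by $\|\widehat P_N\widehat g(t_n+c_j\Delta t,U_{nj})-\widehat P_N\widehat g(t_n+c_j\Delta t,u(t_n+c_j\Delta t))\|_{\ell^2}$ without any loss of a power of $\Delta t$ or $N$. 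Then $\ell^2$-stability of $\widehat P_N$, Parseval, and the local Lipschitz estimate \eqref{locally_Lipschitz_eq} of Lemma \ref{locally_Lipschitz} reduce this to $\|U_{nj}-u(t_n+c_j\Delta t)\|_2$. Splitting $U_{nj}-u(t_n+c_j\Delta t)=E_{nj}+\bigl(u_N(t_n+c_j\Delta t)-u(t_n+c_j\Delta t)\bigr)$ and using Theorem \ref{elliptic_error} for the semi-discrete $H^2$-error, the nonlinear sum is controlled by $\sum_{j=1}^{i-1}\|E_{nj}\|_2+N^{2-m}$.

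For the defect I would take $r=1$ in the representation $\delta_{ni}=\Delta t\,\psi_{1,i}(-\Delta t L_N)P_N g(t_n,u(t_n))+\delta_{ni}^{[1]}$. The first-order consistency of the exponential Runge--Kutta scheme forces $\psi_{1,i}(-\Delta t\widehat L_N)\equiv0$: the order condition $\sum_{k=1}^{i-1}a_{ik}(-\Delta t\widehat L_N)=c_i\varphi_1(-c_i\Delta t\widehat L_N)=\Delta t^{-1}\int_0^{c_i\Delta t}e^{-(c_i\Delta t-\tau)\widehat L_N}\dd\tau$ holds because the interpolatory weights reproduce the constant. Hence only the remainder $\delta_{ni}^{[1]}$ survives, and \eqref{lemma4-1} of Lemma \ref{lemma6} gives $\|\delta_{ni}\|_2=\|\delta_{ni}^{[1]}\|_2\lesssim(\Delta t)^2\sup_{0\le\eta\le1}\|f'(t_n+\eta\Delta t,u(t_n+\eta\Delta t),\nabla u(t_n+\eta\Delta t))\|_2+N^{2-m}$.

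Collecting the three bounds yields $\|E_{ni}\|_2\lesssim\|e_n\|_2+\sum_{j=1}^{i-1}\|E_{nj}\|_2+(\Delta t)^2\sup\|f'\|_2+N^{2-m}$. Since $c_1=0$ gives $E_{n1}=e_n$ and the number of stages $s$ is fixed, a short finite induction on $i$ (only $s$ steps, with $O(1)$ constants) resolves the sum and delivers the claimed estimate with an $N$- and $\Delta t$-independent constant. I expect the main obstacle to be the nonlinear-sum step: one must move $\widehat L_N$ onto $a_{ij}$ via the smoothing bound of Lemma \ref{lemma2}(iii), since the Lipschitz estimate \eqref{locally_Lipschitz_eq} is only an $L^2/\ell^2$ bound and applying $\widehat L_N$ to the nonlinear difference would introduce a spurious factor of $N$; a secondary subtlety is that \eqref{locally_Lipschitz_eq} is valid only while the stage values $U_{nj}$ remain in the $R$-strip about $u(t)$, which is ensured here by the a priori smallness of $\|e_n\|_2$ and $N^{2-m}$ furnished by the induction of the main convergence theorem.
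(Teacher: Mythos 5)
Your proposal is correct and follows essentially the same route as the paper's proof: bound $\|\widehat L_N\widehat E_{ni}\|_{\ell^2}$ via the recursion \eqref{recursion-E}, transfer $\widehat L_N$ onto $\Delta t\,a_{ij}(-\Delta t\widehat L_N)$ using Lemma \ref{lemma2}(iii) before invoking the local Lipschitz bound, kill $\psi_{1,i}$ by consistency so that $\delta_{ni}=\delta_{ni}^{[1]}$ is controlled by Lemma \ref{lemma6} with $r=1$, and close with a finite recursion over the stages. The only differences are cosmetic (you make the splitting $U_{nj}-u=E_{nj}+(u_N-u)$ and the appeal to the semi-discrete $H^2$ bound explicit, where the paper leaves them implicit in the phrase ``locally-Lipschitz continuous'').
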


\begin{proof}
	According to the definition of $\psi_{j,i}$ in (\ref{function}), we have by some manipulations that $\psi_{1, j}=0, j=1, \cdots, s$ when the EIFG method \ref{explicit_ETD} is consistent. Therefore, the estimation of $\|\delta_{ni}\|_2$ can be converted to that of $\|\delta_{ni}^{[1]}\|_2$. Using the similar arguments for deriving the estimate \eqref{mid-estimate2} and \eqref{lemma2-3} in Lemma \ref{lemma2}, we have
	\begin{align}
	\begin{split} &\Big\|\widehat L_N\Delta t \textstyle\sum\limits_{j=1}^{i-1}a_{ij}(-\Delta t \widehat L_N)\Big(\widehat P_N\widehat g(t_n+c_j\Delta t ,U_{nj})-\widehat P_N\widehat g(t_n+c_j\Delta t ,u(t_n+c_j\Delta t )) \Big)\Big\|_{\ell^2}\\
	&\lesssim  \textstyle\sum\limits_{j=1}^{i-1}\|\Delta t L_Na_{ij}(-\Delta t L_N)\|_0\nonumber\\
	&\qquad \cdot\textstyle\max\limits_{2 \leq j \leq i -1} \|\widehat P_N\widehat g(t_n+c_j\Delta t ,U_{nj})-\widehat P_N\widehat g(t_n+c_j\Delta t ,u(t_n+c_j\Delta t ))\|_{\ell^2}\\
	&\lesssim \textstyle\max\limits_{2 \leq j \leq i -1} \|\widehat P_N\widehat g(t_n+c_j\Delta t ,U_{nj})-\widehat P_N\widehat g(t_n+c_j\Delta t ,u(t_n+c_j\Delta t ))\|_{\ell^2}\\
	&\lesssim  \textstyle\max\limits_{2 \leq j \leq i-1}\|E_{nj}\|_2+N^{2-m},
	\end{split}
	\end{align}
	where the last inequality uses that $\widehat g$ is locally-Lipschitz continuous.
	
	Note that $\|\delta_{ni}^{[1]}\|_2$ is uniformly bounded for $i = 1, \cdots, s$ (see \eqref{lemma4-1} in Lemma \ref{lemma4} with $r=1$). Recalling the relation \eqref{recursion-E}, we have by the triangle inequality that {
		\begin{equation*}
		\begin{split}
		&\|E_{ni}\|_2=\|L_N\widehat E_{ni}\|_{\ell^2}\\
		& \lesssim\; \|\widehat L_Ne^{-c_i\Delta t \widehat L_N}\widehat e_n\|_{\ell^2}+(\Delta t)^2\textstyle\sup\limits_{0 \leq \eta \leq 1}\|\widehat L_N\widehat P_N\widehat g'(t_n+\eta\Delta t ,u(t_n+\eta \Delta t ))\|_{\ell^2}+N^{2-m}\\
		&\quad+\Big\|\widehat L_N\Delta t  \textstyle\sum\limits_{j=1}^{i-1}a_{ij}(-\Delta t \widehat L_N)\Big(\widehat P_N\widehat g(t_n+c_j\Delta t ,U_{nj})-\widehat P_N\widehat g(t_n+c_j\Delta t ,u(t_n+c_j\Delta t )) \Big)\Big\|_{\ell^2}\\
		&\lesssim\; \|e_n\|_2 +  (\Delta t)^2 \textstyle\sup\limits_{0\leq \eta \leq 1} \| g'(t_n+\eta \Delta t , u(t_n+\eta \Delta t ))\|_2+ N^{2-m}+ \textstyle\max\limits_{2 \leq j \leq i -1}\|E_{nj}\|_2\\
		&\lesssim \; \|e_n\|_2 +  (\Delta t)^2 \textstyle\sup\limits_{0\leq \eta \leq 1}\| f'(t_n+\eta \Delta t , u(t_n+\eta \Delta t ),\nabla u(t_n+\eta\Delta t))\|_2 + N^{2-m}
		+ \textstyle\max\limits_{2 \leq j \leq i -1} \|E_{nj}\|_2.
		\end{split}
		\end{equation*}}
	Finally \eqref{E_n} is obtained by recursively using the above inequality.
\end{proof}

\begin{theorem}[Error estimate for the EIFG2 scheme] \label{thm4}
	Suppose that the function $f$ satisfies Assumptions \ref{assumption4} and \ref{assumption5}, and the exact solution $u(t)$ fulfills \eqref{regularity1}-\eqref{regularity3} in Assumptions \ref{assumption4}. There exists a constant $N_0>0$ such that if the number of spatial meshes $N \geq N_0$, then after Fourier transform, the numerical solution $\{u_N^n\}$ produced by EIFG2 scheme  \eqref{EIFE2} satisfies
	\begin{equation}\label{err2}
	\|u(t_n)-u_N^n\|_2 \lesssim (\Delta t)^2 + N^{2-m},\quad \forall\,n=1,\cdots, N_T,
	\end{equation}
	where the hidden constant is independent of $N$ and $\Delta t$.	
\end{theorem}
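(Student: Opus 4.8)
The plan is to follow the split already recorded in \eqref{fully-discrete-error}, namely $\|u(t_n)-u_N^n\|_2\le\|u(t_n)-u_N(t_n)\|_2+\|e_n\|_2$ with $e_n=u_N^n-u_N(t_n)$. The first (semi-discrete) term is controlled directly by Theorem \ref{elliptic_error}, giving $\|u(t_n)-u_N(t_n)\|_2\lesssim N^{2-m}$, so the whole task reduces to proving $\|e_n\|_2\lesssim(\Delta t)^2+N^{2-m}$. Since $u_N^0$ and $u_N(0)$ are both the $L^2$-projection of $u_0$, I would start from $\widehat e_0=0$. Before touching the recursion I would record the order conditions specific to EIFG2: from the definitions in \eqref{function} and the weights in \eqref{EIFE2} one checks that $\psi_1=\psi_2=0$, together with the stage condition $\psi_{1,i}=0$ already noted in the proof of Lemma \ref{lemma8}. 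Consequently the defect $\widehat\delta_{n+1}$ collapses to its remainder $\widehat\delta_{n+1}^{[2]}$, so that the case $r=2$ of Lemma \ref{lemma6} becomes the relevant one for the main step.

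Next I would solve the error recursion \eqref{recursion-e} by discrete variation of constants. With $\widehat e_0=0$ this writes $\widehat e_n$ as the sum of an accumulated-defect contribution $-\sum_{l=1}^{n}e^{-(n-l)\Delta t\widehat L_N}\widehat\delta_l$ and an accumulated-nonlinearity contribution $\Delta t\sum_{l=0}^{n-1}e^{-(n-1-l)\Delta t\widehat L_N}\sum_i b_i(-\Delta t\widehat L_N)\widehat D_{l,i}$, where $\widehat D_{l,i}=\widehat P_N\widehat g(t_l+c_i\Delta t,U_{li})-\widehat P_N\widehat g(t_l+c_i\Delta t,u(t_l+c_i\Delta t))$. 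Measuring in $\|\cdot\|_2\eqsim\|\widehat L_N\,\cdot\,\|_{\ell^2}$ through \eqref{equal_norm}, the defect part is bounded directly by \eqref{lemma4-2} of Lemma \ref{lemma6} with $r=2$, yielding $(\Delta t)^2\sup_t\|f^{(2)}\|_2+N^{2-m}\lesssim(\Delta t)^2+N^{2-m}$ thanks to the regularity \eqref{regularity1}--\eqref{regularity3}.

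For the nonlinear part I would use Lemma \ref{locally_Lipschitz} to pass from $\|\widehat D_{l,i}\|_{\ell^2}$ to $\|U_{li}-u(t_l+c_i\Delta t)\|_2$, then insert $U_{li}-u=E_{li}+(u_N-u)$ and invoke Lemma \ref{lemma8} together with Theorem \ref{elliptic_error} to obtain $\|\widehat D_{l,i}\|_{\ell^2}\lesssim\|e_l\|_2+(\Delta t)^2+N^{2-m}$. The decisive point is then to bound the weighted convolution $\Delta t\,\widehat L_N\sum_{l}e^{-(n-1-l)\Delta t\widehat L_N}b_i(-\Delta t\widehat L_N)\widehat D_{l,i}$ with a constant independent of $N$: here I would invoke the smoothing estimate \eqref{lemma2-2} of Lemma \ref{lemma2}, i.e. the bound $\|\Delta t\widehat L_N\sum_j e^{-j\Delta t\widehat L_N}\|_0\lesssim1$ on the norm of the whole sum, together with \eqref{lemma2-3} for $b_i$. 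This produces a discrete inequality of Gronwall type, $\|e_n\|_2\le C\big((\Delta t)^2+N^{2-m}\big)+C\Delta t\sum_{l=0}^{n-1}\|e_l\|_2$, whose resolution gives $\|e_n\|_2\lesssim(\Delta t)^2+N^{2-m}$ with an $N$-independent constant; combining with the semi-discrete bound finishes \eqref{err2}.

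The hard part is exactly this last stability step. The operator $\widehat L_N$ has eigenvalues growing like $(N/2)^2$ (see \eqref{eigenvalue}), so a naive term-by-term estimate of $\widehat L_N e^{-(n-1-l)\Delta t\widehat L_N}b_i$ produces kernels of size $\sim 1/(n-1-l)$ whose sum is only $O(\log(1/\Delta t))$; keeping the constant independent of both $N$ and $\Delta t$ forces one to estimate the norm of the \emph{summed} operator, which is precisely what Lemma \ref{lemma2} supplies. A second, subtler, point is that Lemmas \ref{locally_Lipschitz} and \ref{lemma8} are valid only while the iterates $U_{li}$ remain inside the fixed Lipschitz strip around $u(t)$; I would therefore run the whole estimate as an induction on $n$, using the threshold $N\ge N_0$ together with small $\Delta t$ to guarantee that $\|U_{li}-u\|_2$ stays below the strip radius $R$ at every step, so that the local Lipschitz bounds remain applicable throughout.
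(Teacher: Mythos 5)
Your overall architecture coincides with the paper's: the same splitting \eqref{fully-discrete-error}, the observation $\psi_1=\psi_2=0$ so that $\delta_{n+1}=\delta_{n+1}^{[2]}$, Lemma \ref{lemma6} with $r=2$ for the accumulated defects, Lemma \ref{lemma8} for the internal stages, and a discrete Gronwall argument at the end. The one place where you diverge is precisely the step you flag as ``the hard part,'' and there your proposed fix does not close. You want to control $\bigl\|\Delta t\,\widehat L_N\sum_{l}e^{-(n-1-l)\Delta t\widehat L_N}b_i(-\Delta t\widehat L_N)\widehat D_{l,i}\bigr\|_{\ell^2}$ by invoking the summed-operator bound \eqref{lemma2-2}. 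But \eqref{lemma2-2} estimates the operator norm of $\Delta t L_N^{\gamma}\sum_j e^{-j\Delta t L_N}$ acting on a \emph{single} vector; in the recursion each semigroup factor multiplies a \emph{different} vector $\widehat D_{l,i}$, and $\bigl\|\sum_l A_l v_l\bigr\|$ cannot be bounded by $\bigl\|\sum_l A_l\bigr\|\max_l\|v_l\|$. Moreover, even if it could, that route would produce $\|e_n\|_2\le C+C\max_l\|e_l\|_2$ rather than the Gronwall-type inequality $\|e_n\|_2\le C+C\Delta t\sum_l\|e_l\|_2$ you write down; to get the latter you must estimate term by term, and with the full power $L_N$ the term-by-term kernel $\|\Delta t L_N e^{-j\Delta t L_N}\|_0\lesssim j^{-1}$ is exactly the non-summable one you correctly identify.

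The paper's resolution is a half-power splitting that you should adopt: isolate the $j=n-1$ term (handled with $\|\Delta t L_N b_i(-\Delta t L_N)\|_0\lesssim 1$ from \eqref{lemma2-3}), and for $j\le n-2$ write $\widehat L_N=\widehat L_N^{1/2}\widehat L_N^{1/2}$, put one half on the semigroup via $\|L_N^{1/2}e^{-t_{n-j-1}L_N}\|_0\lesssim t_{n-j-1}^{-1/2}$ (this is \eqref{lemma2-1} with $\gamma=\tfrac12$, applied term by term, giving the integrable kernel $\Delta t\sum_j t_{n-j-1}^{-1/2}\lesssim T^{1/2}$), and put the other half on the nonlinear difference, which is then controlled in the $H^1$-type norm via the Lipschitz estimate \eqref{locally_Lipschitz_eq2} — this is the reason that estimate is proved separately from \eqref{locally_Lipschitz_eq}. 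The resulting inequality $\|e_n\|_2\lesssim \Delta t\sum_{j=0}^{n-1}t_{n-j-1}^{-1/2}\max_i\|E_{ji}\|_2+(\Delta t)^2+N^{2-m}$ still closes under a discrete Gronwall lemma with weakly singular kernel. Your remaining ingredients (citing Theorem \ref{elliptic_error} for the $H^2$ semi-discrete error, starting from $\widehat e_0=0$, and running an induction to keep the stages inside the Lipschitz strip for $N\ge N_0$ and $\Delta t$ small) are sound and, on the last point, somewhat more careful than the paper itself.
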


\begin{proof}	
	Recalling the definition of $\psi_i$ in (\ref{function}), we can check that $\psi_{1}(-\Delta t L_N)=\psi_2(-\Delta t L_N)=0$, which implies that $\delta_{n+1}=\delta_{n+1}^{[2]}$ by Lagrangian interpolation theorem for $s = 2$. By \eqref{lemma2-3} in Lemma \ref{lemma2}, we have
	\begin{equation}
	\label{eq4-4}
	\begin{split}
	&\Big\|\widehat L_N\Delta t \textstyle\sum\limits_{j=0}^{n-1} e^{-(n-1-j)\Delta t \widehat L_N}\textstyle\sum\limits_{i=1}^s b_i(-\Delta t \widehat L_N)\Big(\widehat P_N\widehat g(t_j+c_i\Delta t , U_{ji})-\widehat P_N\widehat g(t_j+c_i\Delta t ,u(t_j+c_i\Delta t )) \Big)\Big\|_{\ell^2}
	\\
	&\lesssim  \Big\|\widehat L_N\Delta t \textstyle\sum\limits_{i=1}^s b_i(-\Delta t \widehat L_N)\Big(\widehat P_N\widehat g(t_{n-1}+c_i\Delta t ,U_{n-1,i})-\widehat P_N\widehat g(t_{n-1}+c_i\Delta t ,u(t_{n-1}+c_i\Delta t ) \Big)\Big\|_{\ell^2}
	\\
	&\quad+ \Big\|\widehat L_N\Delta t \textstyle\sum\limits_{j=0}^{n-2} e^{-(n-1-j)\Delta t \widehat L_N}\textstyle\sum\limits_{i=1}^s \Big(\widehat P_N\widehat g(t_j+c_i\Delta t ,U_{ji})-\widehat P_N\widehat g(t_j+c_i\Delta t ,u(t_j+c_i\Delta t )) \Big)\Big\|_{\ell^2}\\
	&=: {\rm II}_1+{\rm II}_2.
	\end{split}
	\end{equation}
	Following the similar arguments for \eqref{mid-estimate2} in Lemma \ref{lemma8},  since \eqref{lemma2-2} and \eqref{lemma2-3} in Lemma \ref{lemma2},  we can obtain
	\begin{equation}
	\label{add-1}
	\begin{split}
	{\rm II}_1\lesssim \,& \textstyle\sum\limits_{i=1}^s \Big\|\Delta t L_Nb_i(-\Delta t L_N)\Big\|_0\textstyle\max\limits_{1\le i\le s}\|\widehat P_N\widehat g(t_{n-1}+c_i\Delta t ,U_{n-1,i})\\
	&\qquad\qquad-\widehat P_N\widehat g(t_{n-1}+c_i\Delta t ,u(t_{n-1}+c_i\Delta t ))\|_{\ell^2}\\
	\lesssim \,& \textstyle\max\limits_{1\le i\le s} \|\widehat P_N\widehat g(t_{n-1}+c_i\Delta t ,U_{n-1,i})-\widehat P_N\widehat g(t_{n-1}+c_i\Delta t ,u(t_{n-1}+c_i\Delta t ))\|_{\ell^2} \\
	\lesssim \,& \textstyle\max\limits_{1 \leq i \leq s} \|E_{n-1,i}\|_2+N^{2-m},
	\end{split}
	\end{equation}
	and
	\begin{equation}
	\label{add-2}
	\begin{split}
	{\rm II}_2
	\lesssim \,& \Big\|\Delta t L_N^{\frac 1 2}\textstyle\sum\limits_{j=0}^{n-2}e^{-(n-1-j)\Delta t L_N}\Big\|_0\textstyle\sup\limits_{ 0  \leq t \leq    T }\Big\|\widehat L_N^{\frac 1 2}\Big(\widehat P_N\widehat g(t,u(t))-\widehat P_N\widehat g(t,u_N(t))\Big)\Big\|_{\ell^2} \\&+ \textstyle\sum\limits_{j=0}^{n-2} \Delta t \Big\|L_N^{\frac 1 2} e^{-(n-j-1)\Delta t L_N}\Big\|_0\textstyle\max\limits_{1\le i \le s} \Big\|\widehat L_N^{\frac 1 2}\Big(\widehat P_N\widehat g(t_j+c_i\Delta t ,U_{ji})\\
	&\qquad-\widehat P_N\widehat g(t_j+c_i\Delta t ,u_N(t_j+c_i\Delta t ))\Big)\Big\|_{\ell^2}
	\\
	\lesssim\,& \Delta t\textstyle\sum\limits_{j=0}^{n-2}t_{n-j-1}^{-\frac 1 2}\textstyle\max\limits_{1\le i \le s}\Big\|\widehat L_N^{\frac 1 2}\Big(\widehat P_N\widehat g(t_j+c_i\Delta t ,U_{ji})\\
	&\qquad-\widehat P_N\widehat g(t_j+c_i\Delta t ,u_N(t_j+c_i\Delta t ))\Big)\Big\|_{\ell^2}+N^{2-m}
	\\
	\lesssim \,& \Delta t\textstyle\sum\limits_{j=0}^{n-2}t_{n-j-1}^{-\frac 1 2}\textstyle\max\limits_{1 \leq i \leq s} \|E_{ji}\|_2+N^{2-m}.
	\end{split}
	\end{equation}
	
	With the help of the estimates \eqref{eq4-4}, \eqref{add-1} and \eqref{add-2}, it follows from the relation \eqref{recursion-e} and \eqref{lemma4-2} in Lemma \ref{lemma4} (with $r=2$) that
	\begin{equation*}
	\begin{split}
	\|e_n\|_2 \lesssim\,&  \Big\|\widehat L_N\Delta t \textstyle\sum\limits_{j=0}^{n-1} e^{-(n-1-j)\Delta t \widehat L_N}\textstyle\sum\limits_{i=1}^s b_i(-\Delta t \widehat L_N)\Big(\widehat P_N\widehat g(t_j+c_i\Delta t , U_{ji})\nonumber\\
	&\;-\widehat P_N\widehat g(t_j+c_i\Delta t ,u(t_j+c_i\Delta t )) \Big)\Big\|_{\ell^2}+\Big\|\widehat L_N\textstyle\sum\limits_{j=0}^{n-1}e^{-j\Delta t \widehat L_N}\delta_{n-j}^{[2]}\Big\|_{\ell^2}
	\\
	\lesssim \,& \Delta t\textstyle\max\limits_{1\le i \le s}  \|E_{n-1,i}\|_2
	+\Delta t\textstyle\sum\limits_{j=0}^{n-2}t_{n-j-1}^{-\frac 1 2}\textstyle\max\limits_{1\le i \le s}\|E_{ji}\|_2\\
	&
	+\Delta t^2\textstyle\sup\limits_{ 0  \leq t \leq    T }\| f^{(2)}(t,u(t),\nabla u(t))\|_2+N^{2-m}
	\\
	\lesssim \,& \Delta t\textstyle\sum\limits_{j=0}^{n-1}t_{n-j-1}^{-\frac 1 2}\textstyle\max\limits_{1\le i \le s} \|E_{ji}\|_2+(\Delta t)^2+N^{2-m}.
	\end{split}
	\end{equation*}
	This combined with the estimation of $\|E_{ji}\|_2$ in Lemma \ref{lemma8} and the discrete Gronwall inequality leads to
	\begin{equation}\label{err2c}
	\|u_N^n- u_N(t_n)\|_2 \lesssim (\Delta t)^2 + N^{2-m}.
	\end{equation}
	Finally, the combination of \eqref{fully-discrete-error}, \eqref{semi-discrete-error-eq} and \eqref{err2c} immediately gives  \eqref{err2}.	
\end{proof}

\begin{remark}
	When the reaction term only depends on $t$ and $u(t)$, then the error estimate can be derived in $H^1$-norm and the corresponding spatial accuracy can be increased to order $N^{1-m}$ and the temporal accuracy is of order $\Delta t^2$ for EIFG2 method, i.e.,
	\begin{equation}
	\label{remark1}
	\|u(t_n)-u_N^n\|_1 \lesssim (\Delta t)^2 + N^{1-m},\quad \forall\,n=1,\cdots, N_T,
	\end{equation}
	and the above estimation can be easily derived by following similar arguments in \cite{HuangJu2023}.
\end{remark}

\begin{remark}
	The temporal convergence order of EIFG method can be improved when choosing $s \geq 3$. But the theoretical analysis will be quite complicated since the estimates of $\|e_n\|_2$ and $\|E_{ni}\|_2$ will be coupled together. We refer the reader to \cite{HochbruckOstermann2005b} for some details along this line, and rigorous error estimates of higher-order EIFG schemes would be an interesting open question.
\end{remark}


\section{Numerical experiments and applications}\label{numerical}

In this section, several numerical examples and applications are presented to illustrate the performance of the proposed efficient EIFG method and then verify the convergent order of numerical scheme we have derived in Section \ref{theory}. All numerical experiments are done using Matlab on an Intel i5-8250U, 1.80GHz CPU laptop with 8GB memory. Specially, we choose the interpolation node $c_2=\frac 1 2$ for EIFG2 method. Moreover, to illustrate the great performance of EIFG, we also present numerical schemes with third-order temporal convergence, which are abbreviated as EIFG3. The corresponding exponential Runge-Kutta tableau is shown in the Table \ref{tab 5} as \cite{Krogstad2005},
\begin{table}[h]
	\centering
	\caption{The exponential Runge-Kutta tableau of EIFG3}
	\begin{tabular}{c|cccc}
		0 &  &  &  & \\
		$\frac 1 2$ & $\frac 1 2 \varphi_{1,2}$ &  & & \\
		$\frac 1 2$ & $\frac 1 2 \varphi_{1,3}-\varphi_{2,3}$ & $\varphi_{2,3}$ & & \\
		1 & $\varphi_{1,4}-2\varphi_{2,4}$ & 0 & $2\varphi_{2,4}$ & \\
		\hline
		& $\varphi_1-3\varphi_2+4\varphi_3$ & $2\varphi_2-4\varphi_3$ & $2\varphi_2-4\varphi_3$ & $-\varphi_2+4\varphi_3$
	\end{tabular}
	\label{tab 5}
\end{table}
where $\varphi_{j,k}=\varphi_j(-c_k\Delta tL_N)$ and $\varphi_j = \varphi_j(-\Delta tL_N)$.

\subsection{Convergence tests}
We first verify the error estimation obtained in Theorem \ref{thm4} for EIFG2 scheme and EIFG3 scheme at the terminal time $T$. Since the reaction term in Example \ref{ex1} and Example \ref{ex2} are independent of $\nabla u$, we only discuss the numerical accuracy in $H^1$-norm.

\begin{example}
	\label{ex1}
	In this example, the three-dimensional nonlinear reaction diffusion problem with periodic boundary condition is shown as follows:
	\begin{equation*}
	\left\{\begin{split}
	&u_t= \Delta u -u +f(t,x,y,z), \quad (x,y,z) \in \Omega,0 \leq t \leq T\\
	&u(0, x, y,z) = x^2(x-1)^2\sin(2\pi x)y^2(y-1)^2\sin(2\pi y)z^2(z-1)^2\sin(2\pi z), \quad (x,y,z) \in \Omega,
	\end{split}  \right.
	\end{equation*}
	where $\Omega = [0,1]^3$, $f(t,x,y,z)$ is determined by the exact solution and the terminal time $T=1$. The exact solution is given by $u(t,x,y)=e^{-t}x^2(x-1)^2\sin(2\pi x)y^2(y-1)^2\sin(2\pi y)z^2(z-1)^2\sin(2\pi z)$.
	
\end{example}

For the spatial accuracy tests, we run the EIFG2 scheme with fixed temporal partition $N_T=4096$ (i.e. $\Delta T=T/N_T=1/4096$) and uniformly refined spatial grids with $N_x \times N_y\times N_z=8\times 8\times 8, 16 \times 16\times 16, 32 \times 32\times 32$ and $64 \times 64\times 64$, respectively, so it's obvious that the temporal step size is much finer than spatial mesh size. Meanwhile, for the temporal accuracy tests, we fix the spatial grids with $N_x \times N_y\times N_z=256 \times 256\times 256$, and the temporal partitions are $N_T=4,8,16,32$ for EIFG2 and EIFG3 method. All numerical results are reported in Table \ref{tab 1}, including the numerical errors measured in $L^2$ and $H^1$ norms and the corresponding convergence rates. Observing from the numerical results, we can observe the roughly fourth-order spatial convergence with respect to $L^2$ norm as expected but a half order higher than third-order convergence in $H^1$ norm. It's also easy to observe the second-order temporal convergence for EIFG2 scheme and third-order temporal convergence for EIFG3 scheme, which coincide well with the error estimates derived in Theorem \ref{thm4}.

Table \ref{tab 2} reports the average CPU time costs (seconds) per iteration for the EIFG2 scheme and corresponding growth factors along the refinement of the spatial mesh. All tests are run with fixed temporal partitions $N_T=50$. The uniform spatial meshes are fixed with $N_x \times N_y\times N_z=16 \times 16\times 16, 32 \times 32\times 32, 64\times 64\times 64$ and $128\times 128\times 128$. The results clearly show that the computational cost grows almost linearly along with the number of mesh nodes, which matches well with the property of FFT and demonstrates the high efficiency of our EIFG method.

\begin{table}[htbp]
	\centering
	\caption{Numerical results on the solution errors measured in the $L^2$ and $H^1$ norms and corresponding convergence rates for the EIFG2 and EIFG3 schemes in Example \ref{ex1}.}
	\begin{tabular}{|cccccc|}
		\hline
		$N_T$ & $N_x\times N_y\times N_z$& $\|u_N^n-u(T_n)\|_0$ & CR & $\|u_N^n-u(T_n)\|_1$ & CR\\
		\hline
		\multicolumn{6}{|c|}{Spatial accuracy tests for EIFG2}\\
		\hline
		4096 & $8 \times 8\times 8$ & 7.8049e-08 & - & 1.2119e-06 & - \\
		4096 & $16 \times 16\times 16$ & 3.6115e-09 & 4.43 & 8.1247e-08 & 3.90 \\
		4096 & $32 \times 32\times 32$ & 2.0478e-10 & 4.14 & 6.8708e-09 & 3.56 \\
		4096 & $64 \times 64\times 64$ & 1.2213e-11 & 4.07 & 6.0627e-10 & 3.50\\
		\hline
		\multicolumn{6}{|c|}{Temporal accuracy tests for EIFG2}\\
		\hline
		4 & $256 \times 256 \times 256$ & 1.2555e-07 & - & 1.6657e-06 & -\\
		8 & $256 \times 256 \times 256$ & 2.9656e-08 & 2.08 & 3.9370e-07 & 2.08\\
		16 & $256 \times 256 \times 256$ & 7.1061e-09 & 2.06 & 9.4935e-08 & 2.05\\
		32 & $256 \times 256 \times 256$ & 1.6441e-09 & 2.11 & 2.2386e-08 & 2.09\\
		\hline
		\multicolumn{6}{|c|}{Temporal accuracy tests for EIFG3}\\
		\hline
		4 & $256 \times 256 \times 256$ & 3.8449e-10& -& 4.6395e-09&- \\
		8 & $256 \times 256 \times 256$ & 6.0865e-11& 2.67& 7.6455e-10& 2.60\\
		16 & $256 \times 256 \times 256$ & 7.0733e-12& 3.11& 9.8254e-11& 2.96\\
		32 & $256 \times 256 \times 256$ & 5.3509e-13& 3.72& 8.8474e-12& 3.47\\
		\hline
	\end{tabular}
	\label{tab 1}
\end{table}

\begin{table}
	\centering
	\caption{The average CPU time costs (seconds) per iteration under different spatial meshes and corresponding growth factors with respect to the number of mesh nodes for the EIFG2 scheme in Example \ref{ex1}}
	\begin{tabular}{|cccc|}
		\hline
		$N_x \times N_y \times N_z$ & $N_T$ & Average CPU time  & Growth factor\\
		& & cost per step&\\
		\hline
		$16\times 16 \times 16$ & 50 & 0.02715 & - \\
		$32\times 32 \times 32$ & 50 & 0.22478 & 1.016 \\
		$64\times 64 \times 64$ & 50 & 1.81191 & 1.004 \\
		$128 \times 128 \times 128$ & 50 & 14.01884 & 0.984\\
		\hline
	\end{tabular}
	\label{tab 2}
\end{table}

\subsection{Mean Curvature Flow}

\begin{example}\label{ex3}
	In this example, we consider the mean curvature flow problem \cite{EvanSpruck1999,MottoniSchatzman1995}. Let $\Omega=[-0.5,0.5]^d$ where $d>1$ denotes the dimension of the space. The problem we simulate is shown in the following:
\begin{equation*}
	\left\{\begin{split}
	&u_t= \Delta u -\frac{1}{\epsilon^2} (u^3-u), \quad \bm x \in \Omega ,\quad \ 0 \leq t \leq T,\\
	&u(0, x, y, z) = \tanh \Big( \frac{R_0-\|\bm x\|_0}{\sqrt{2}\epsilon}\Big), \quad \bm x \in \Omega, \\
	\end{split}  \right.
	\end{equation*}
	where $R_0=0.4$ and the terminal time is taken to be $T=0.075$.
\end{example}

The above example has been widely used in many works, such as \cite{ChenShen1998,DuLiu2004,DuZhu2005,FengProhl2003,XuTang2006,ZhangDu2009}. Suppose the case is equipped with periodic boundary condition, the problem describes the shrinking process of a circle in 2D or a sphere in 3D. Denote $R(t)$ as the radius of the circular region at time $t$ in 2D and $V(t)$ as the volume of the sphere in 3D. It has been proved that when $\epsilon \to 0$, the theoretic limit radius $R_{lim}(t)$ satisfies \cite{ZhangDu2009,LiLee2010}
\begin{equation*}
\frac{\dd R_{lim}}{\dd t}=\frac{1-d}{R_{lim}}.
\end{equation*}

Thus, we have
\begin{equation*}
R_{lim}(t)=\sqrt{R_0^2+2(1-d)t}.
\end{equation*}
Correspondingly it holds that
\begin{equation*}
\left\{\begin{split}
&V_{lim}(t)=\pi (R_0^2-2t), \quad d=2,\\
&V_{lim}^{\frac 2 3}(t)=\Big(\frac 4 3\pi \Big)^{\frac 2 3}(R_0^2-4t), \quad d=3.
\end{split}
\right.
\end{equation*}

We simulate all numerical tests with the interface thickness $\epsilon=0.067, 0.075$, respectively. Since there's no closed form for the exact solution, we only test the temporal accuracy and regard the solution obtained with finest grids $N_x\times N_y=2048\times 2048$ as the approximate exact solution for temporal accuracy tests and denote the corresponding radius as $R_{\epsilon}$. We fix the uniform spatial meshes with $N_x\times N_y = 2048 \times 2048$ and the temporal partitions $N_T=32,64,128,256$ for EIFG2 method, and $N_T=8,16,32,64$ for EIFG3 method. Also, the approximate exact solution for temporal accuracy tests is obtained by finest temporal partitions $N_T=1024$. All numerical results are shown in Table \ref{tab 8}, including the radius error of the circle and the corresponding rates. We can also find the second-order rate for EIFG2 scheme and third-order rate for EIFG3 scheme, which are consistent with the theoretical results derived in Theorem \ref{thm4}. Moreover, we present the circle shrinking process in the 2D space, which are obtained by simulating EIFG2 method with $\epsilon=0.05,N_x\times N_y=2048\times 2048$ and $N_T=1024$. For testing the sphere shrinking process in the 3D space, we simulate the EIFG2 method with $N_x\times N_y\times N_z=256\times 256\times 256,N_T=1024$ and set $T=\frac{3}{\sqrt{2}\epsilon}$. Observing from Figure \ref{fig 5} and Figure \ref{fig 6}, we can obtain that the radius of circles and spheres are both monotonically decreasing along the time interval.

\begin{table}[htbp]
	\centering
	\caption{Errors and convergence rates on the radius of the shrinking circle at the final time $T$ of Example \ref{ex3} by using EIFG2 and EIFG3 methods.}
	\begin{tabular}{|cccccc|}
		\hline
		$N_T$&$N_x\times N_y$ &\multicolumn{2}{c}{$\epsilon=0.067$}&\multicolumn{2}{c|}{$\epsilon=0.075$}\\
		\cline{3-6}
		 & &  $|R-R_{\epsilon}|$ & CR & $|R-R_{\epsilon}|$ & CR\\
		\hline
		\multicolumn{6}{|c|}{Temporal accuracy tests for EIFG2}\\
		\hline
		32 & $2048\times 2048$ & 1.3353e-02 & - & 6.1547e-03 & -\\
		64 & $2048\times 2048$ & 3.9190e-03 & 1.77& 1.6849e-03 & 1.87\\
		128 & $2048\times 2048$ & 1.0386e-03 & 1.92& 4.4831e-04 & 1.91\\
		256 & $2048\times 2048$ & 2.6230e-03 & 1.99& 1.1476e-04 & 1.97\\
		\hline
		\multicolumn{6}{|c|}{Temporal accuracy tests for EIFG3}\\
		\hline
		8 & $2048\times 2048$ & 8.5600e-03 & - & 3.1555e-03 & -\\
		16 & $2048\times 2048$ & 1.4073e-03 & 2.60& 6.7503e-04 & 2.22\\
		32 & $2048\times 2048$ & 1.4168e-04 & 3.31& 7.9937e-05 & 3.08\\
		64 & $2048\times 2048$ & 3.3353e-05 & 2.09& 6.1505e-06 & 3.70\\
		\hline
	\end{tabular}
	\label{tab 8}
\end{table}

\begin{figure}[htbp]
	\centering
	\subfigure{
		\label{fig5-1}
		\centering
		\includegraphics[width = 120pt,height=120pt]{./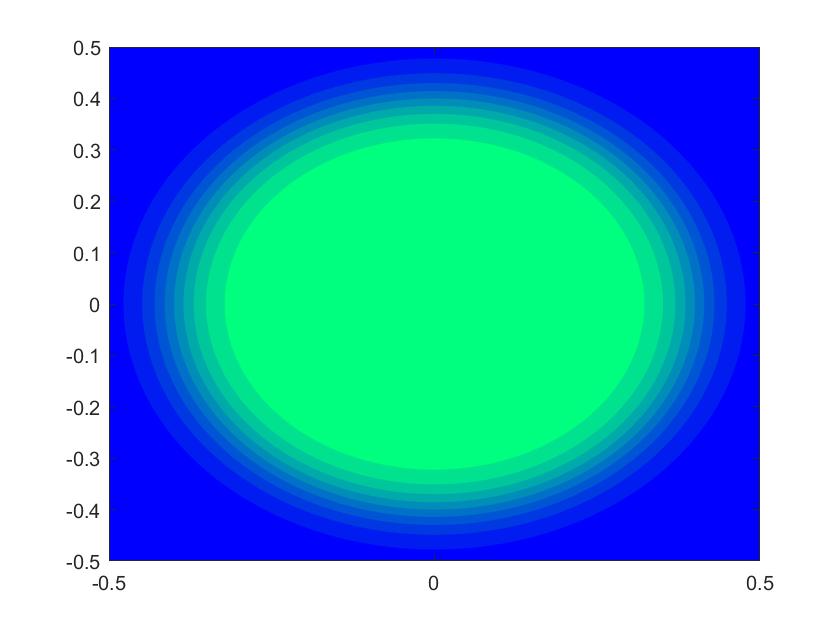}
	}
	\subfigure{
		\label{fig5-2}
		\centering
		\includegraphics[width = 120pt,height=120pt]{./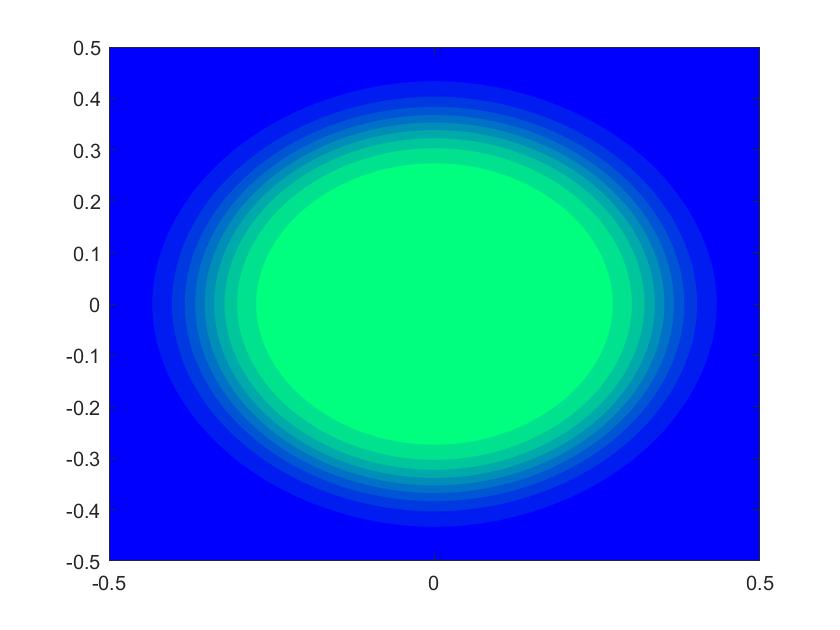}
	}
	\subfigure{
		\label{fig5-3}
		\centering
		\includegraphics[width = 120pt,height=120pt]{./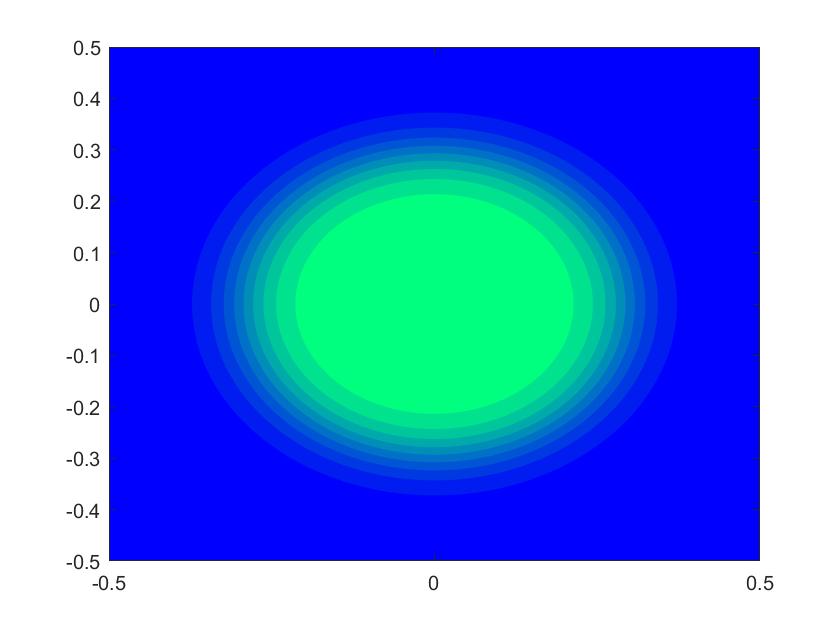}
	}
	
	\subfigure{
		\label{fig5-4}
		\centering
		\includegraphics[width = 120pt,height=120pt]{./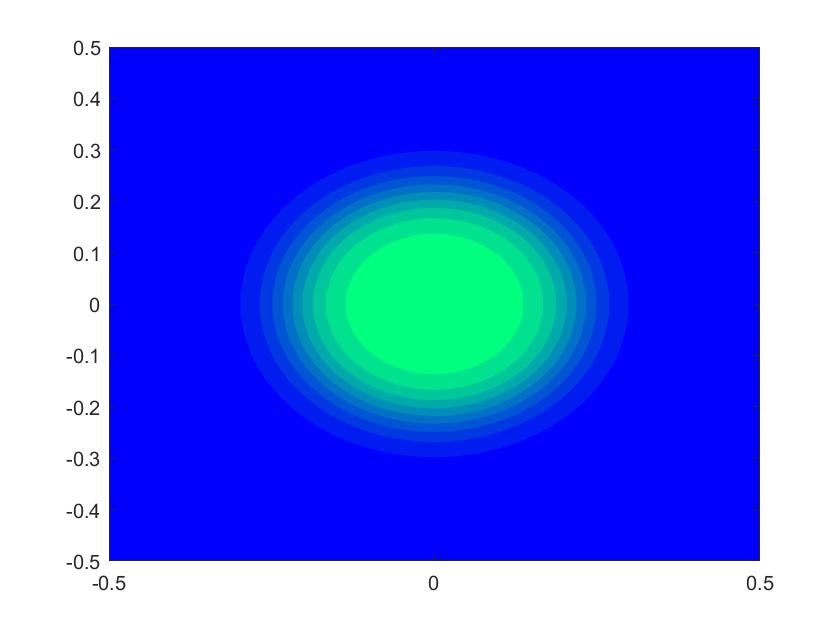}
	}
	\subfigure{
		\label{fig5-5}
		\centering
		\includegraphics[width = 120pt,height=120pt]{./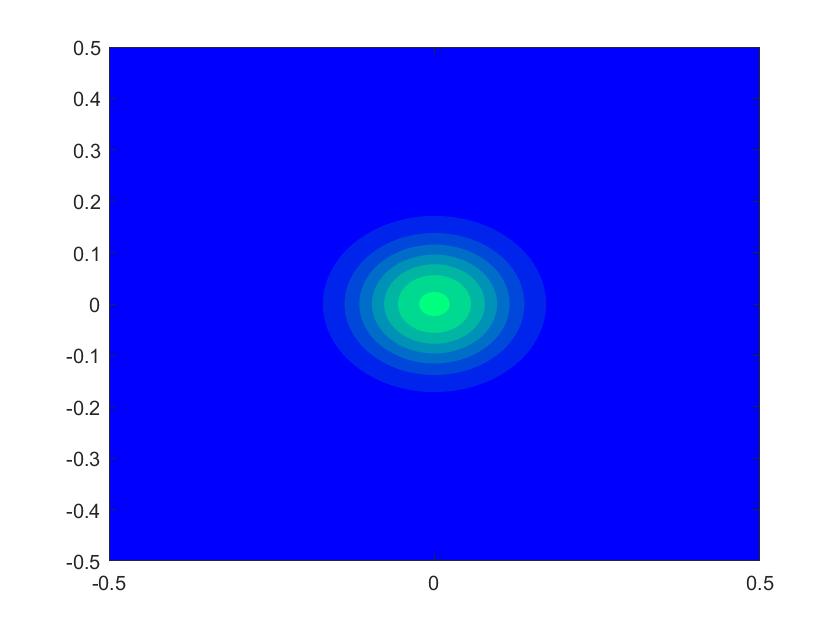}
	}
	\caption{The shrinking circle at time $t=0,1/4 T, 1/2 T,3/4 T, T$ (from left to right and top to bottom) produced by the EIFG2 scheme for Example \ref{ex4} when $d=2$.}
	\label{fig 5}
\end{figure}

\begin{figure}[htbp]
	\centering
	\subfigure{
		\label{fig6-1}
		\centering
		\includegraphics[width = 120pt,height=120pt]{./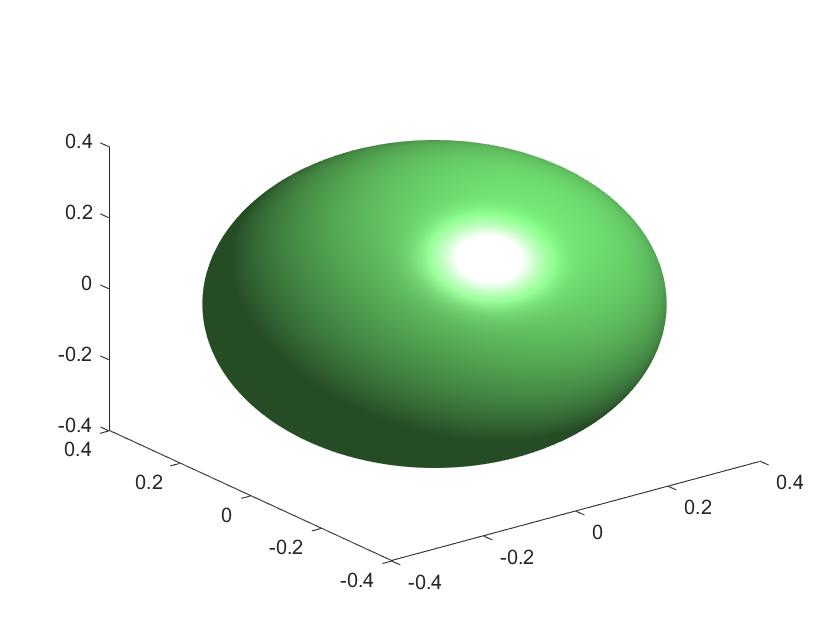}
	}
	\subfigure{
		\label{fig6-2}
		\centering
		\includegraphics[width = 120pt,height=120pt]{./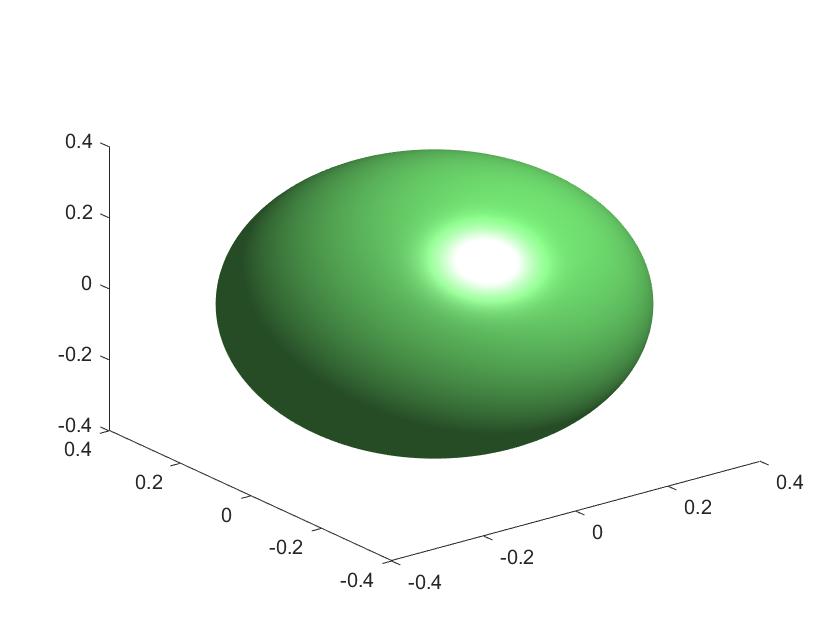}
	}
	\subfigure{
		\label{fig6-3}
		\centering
		\includegraphics[width = 120pt,height=120pt]{./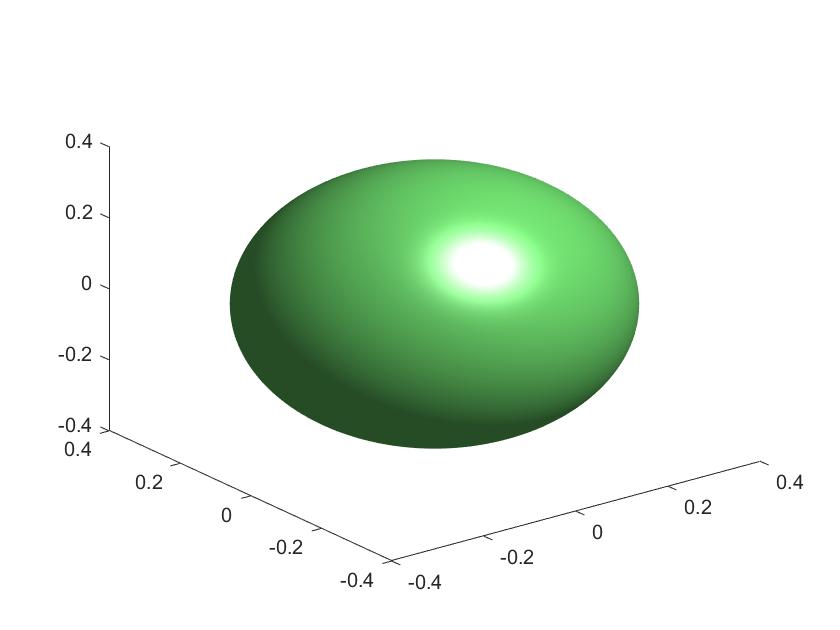}
	}
	
	\subfigure{
		\label{fig6-4}
		\centering
		\includegraphics[width = 120pt,height=120pt]{./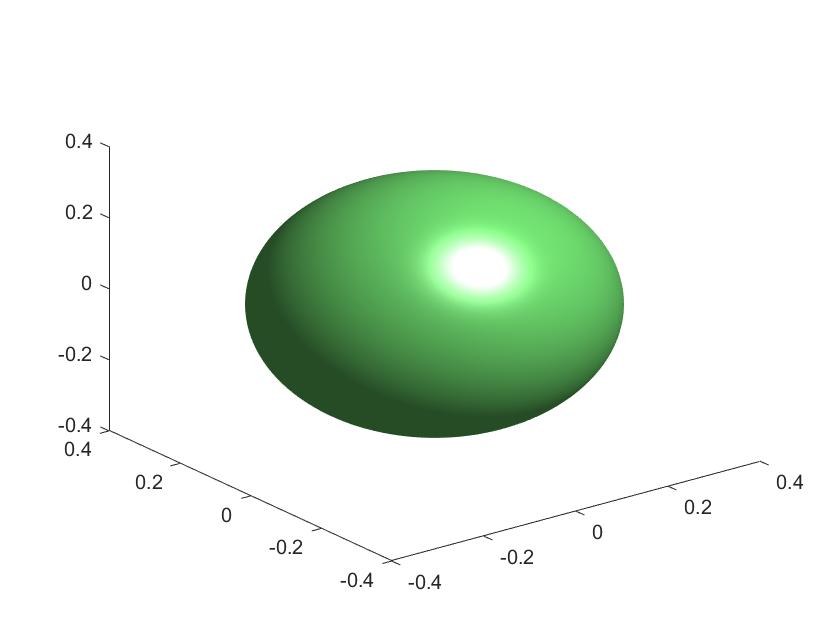}
	}
	\subfigure{
		\label{fig6-5}
		\centering
		\includegraphics[width = 120pt,height=120pt]{./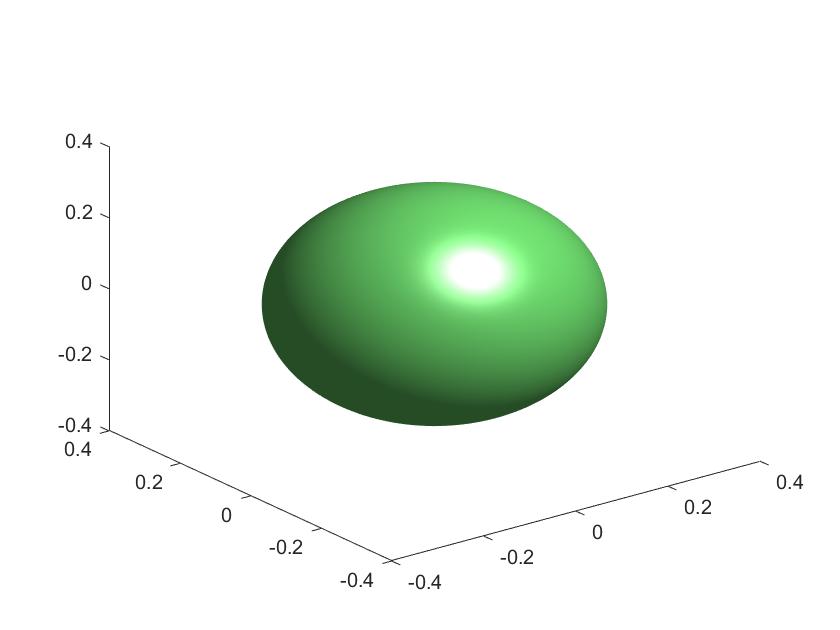}
	}
	\caption{The shrinking sphere at time $t=0,1/4 T, 1/2 T,3/4 T, T$ (from left to right and top to bottom) produced by the EIFG2 scheme for Example \ref{ex4} when $d=3$.}
	\label{fig 6}
\end{figure}

\subsection{3D Burgers equations}
We now present the performance of the proposed EIFG method through numerical simulation of the 3D Burgers equations. The problem we simulate is shown in the following:
\begin{example}
	\label{ex5}
\begin{equation*}
\left\{\begin{split}
&u_t= \epsilon\Delta u -\frac{1}{2} \Big(u^2\Big)_x , \quad \bm x \in \Omega ,\quad \ 0 \leq t \leq T,\\
&u(0, x, y, z) = \frac{2\epsilon\pi\sin(\pi x)}{2+\cos(\pi x)}, \quad \bm x \in \Omega, \\
\end{split}  \right.
\end{equation*}
where $\Omega =[0,2]\times[0,1]\times[0,1],\epsilon=0.1$ and the terminal time is taken to be $T=2$. The exact solution is given by $u(t,x,y,z)=\frac{2\epsilon \pi e^{-\pi^2\epsilon t}\sin(\pi x)}{2+e^{-\pi^2\epsilon t}\cos(\pi x)}$.
\end{example}

For the spatial accuracy tests, we run the EIFG3 scheme with fixed temporal partition $N_T=512$ (i.e. $\Delta T = T/N_T=1/256$) and uniformly refined spatial grids with $N_x\times N_y \times N_z = 512 \times 4\times 4, 1024\times 8 \times 8, 2048\times 16 \times 16, 4096 \times 32 \times 32$. Since the exact solution $u(t,x,y,z)\in H^{\infty}_p$, the numerical error will decay rapidly with the spatial grids refinement and we showed this fact in Figure \ref{fig5}. Meanwhile, for the temporal accuracy tests, we fix the spatial grids with $N_x \times N_y\times N_z=1024 \times 8\times 8$, and the temporal partitions are $N_T=4,8,16,32$ for EIFG2 and $N_T=2,4,8,16$ for EIFG3 method. All numerical results are reported in Table \ref{tab 7}, including the numerical errors measured in $L^2,H^1$ and $H^2$ norms and the corresponding convergence rates. Observing from the numerical results, we can observe the second-order temporal convergence for EIFG2 scheme and third-order temporal convergence for EIFG3 scheme, which coincide well with the error estimates derived in Theorem \ref{thm4}. Furthermore, we present the simulation process in Figure \ref{fig 7}, which are obtained by EIFG3 method with $N_x\times N_y\times N_z = 2048\times 16\times 16$ and $N_T=512$.

\begin{table}[htbp]
	\centering
	\caption{Numerical results on the solution errors measured in the $L^2, H^1$ and $H^2$ norms and corresponding convergence rates for the EIFG2 and EIFG3 schemes in Example \ref{ex1}.}
	\begin{tabular}{|cccccccc|}
		\hline
		$N_T$ & $N_x\times N_y\times N_z$& $\|u_N^n-u(T_n)\|_0$ & CR & $\|u_N^n-u(T_n)\|_1$ & CR & $\|u_N^n-u(T_n)\|_2$ & CR\\
		\hline
		\multicolumn{8}{|c|}{Temporal accuracy tests for EIFG2}\\
		\hline
		4 & $1024 \times 8 \times 8$ & 1.9992e-04 & - & 8.0954e-04 & -& 5.5000e-03& -\\
		8 & $1024 \times 8 \times 8$ & 3.9944e-05 & 2.32 & 1.4434e-04 & 2.49& 9.2581e-04& 2.57\\
		16 & $1024 \times 8 \times 8$ & 9.1424e-06 & 2.13 & 3.1039e-05 & 2.22& 1.8965e-04& 2.29\\
		32 & $1024 \times 8 \times 8$ & 2.2279e-06 & 2.11 & 7.3546e-06 & 2.08& 4.3804e-05& 2.11\\
		\hline
		\multicolumn{8}{|c|}{Temporal accuracy tests for EIFG3}\\
		\hline
		2 & $1024 \times 8 \times 8$ & 1.0423e-04 & - & 3.8960e-04 & - & 3.2000e-03&-\\
		4 & $1024 \times 8 \times 8$ & 1.0587e-05 & 3.30& 3.0752e-05 & 3.66 & 1.8480e-04&4.11\\
		8 & $1024 \times 8 \times 8$ & 9.5445e-07& 3.47 & 2.6088e-06 & 3.56 & 1.3766e-05&3.75\\
		16 & $1024 \times 8 \times 8$ & 1.2234e-07& 2.96& 3.5376e-07 & 2.88 & 1.8923e-06&2.86\\
		\hline
	\end{tabular}
	\label{tab 7}
\end{table}

\begin{figure}[htbp]
	\centering
	\label{fig5}
	\centering
	\includegraphics[width = 150pt,height=150pt]{./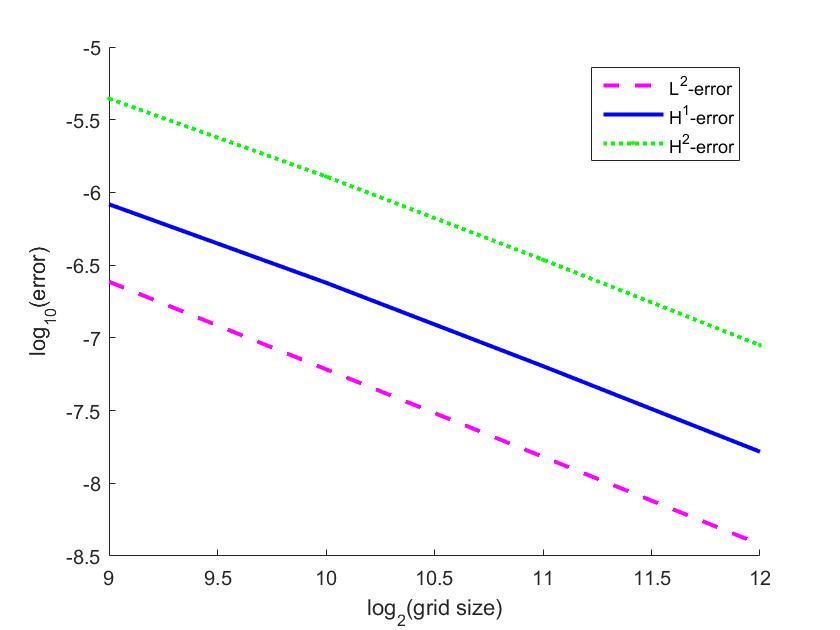}
	\caption{The evolutions of $L^2,H^1$-error and $H^2$-error of the numerical solutions produced by the EIFG3 scheme along with the grid sizes in Example \ref{ex5}.}
\end{figure}

\begin{figure}[htbp]
	\centering
	\subfigure{
		\label{fig7-1}
		\centering
		\includegraphics[width = 120pt,height=120pt]{./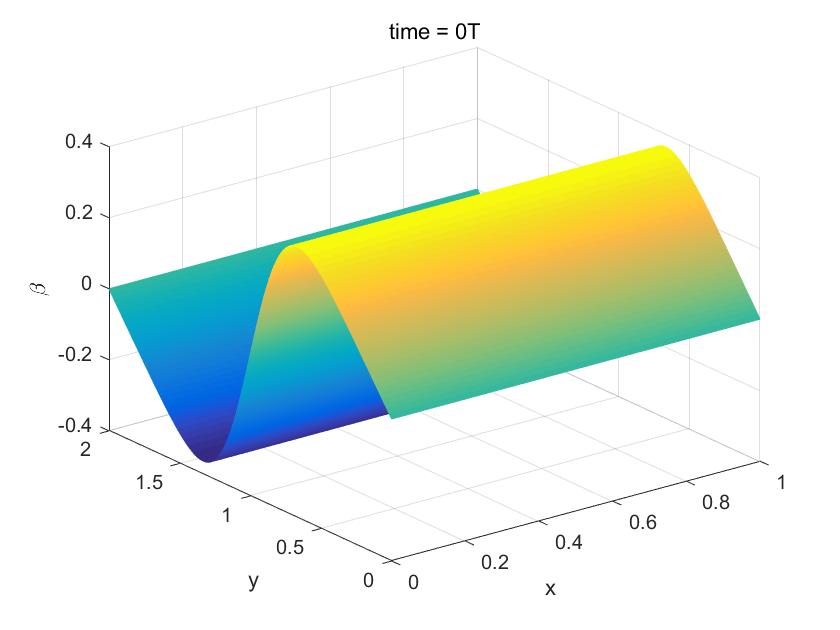}
	}
	\subfigure{
		\label{fig7-2}
		\centering
		\includegraphics[width = 120pt,height=120pt]{./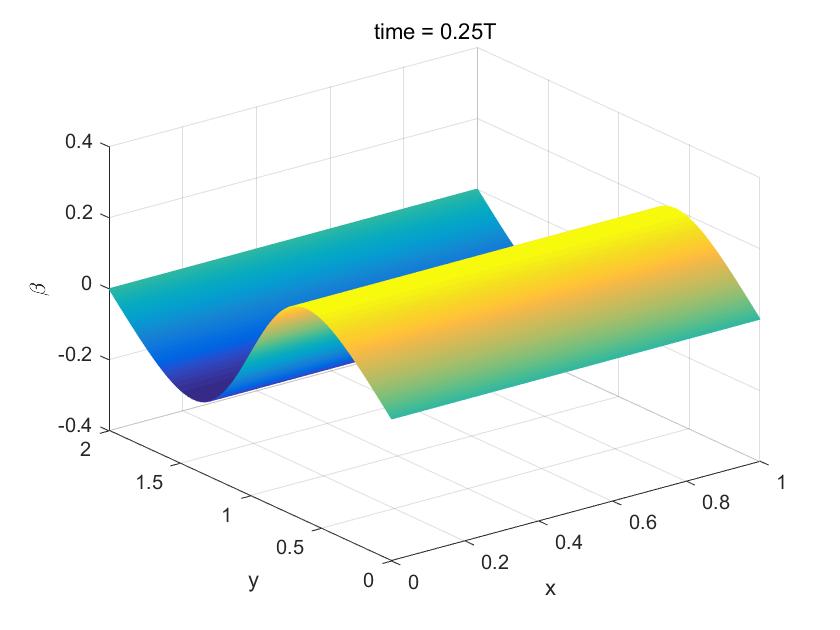}
	}
	\subfigure{
		\label{fig7-3}
		\centering
		\includegraphics[width = 120pt,height=120pt]{./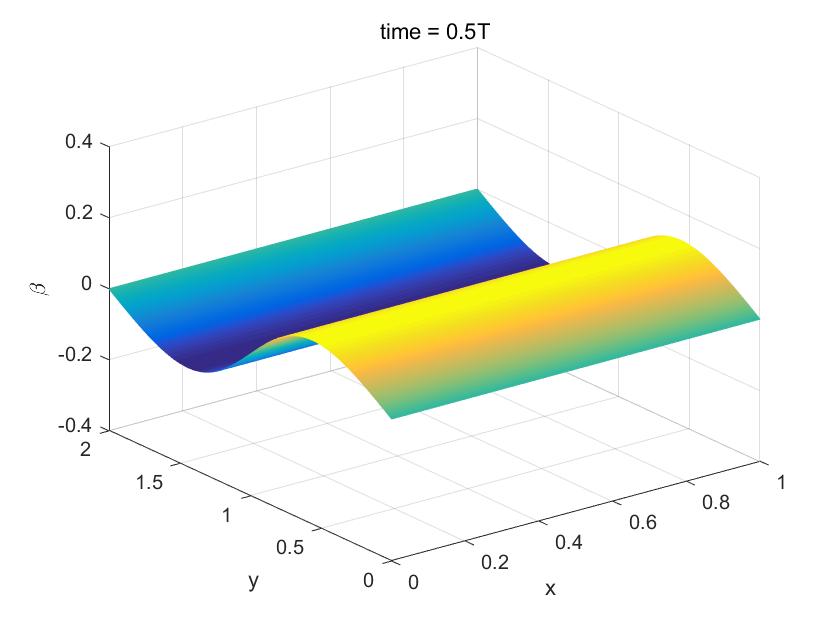}
	}
	
	\subfigure{
		\label{fig7-4}
		\centering
		\includegraphics[width = 120pt,height=120pt]{./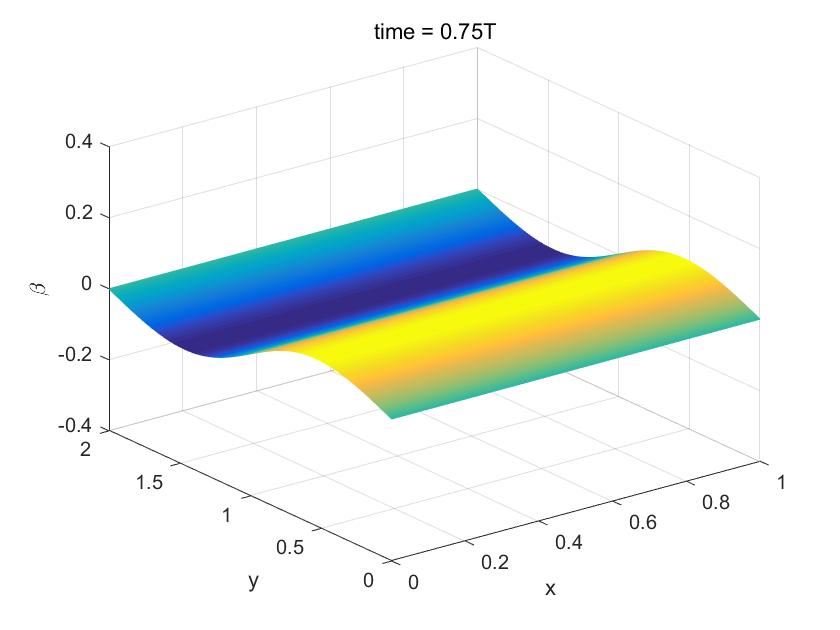}
	}
	\subfigure{
		\label{fig7-5}
		\centering
		\includegraphics[width = 120pt,height=120pt]{./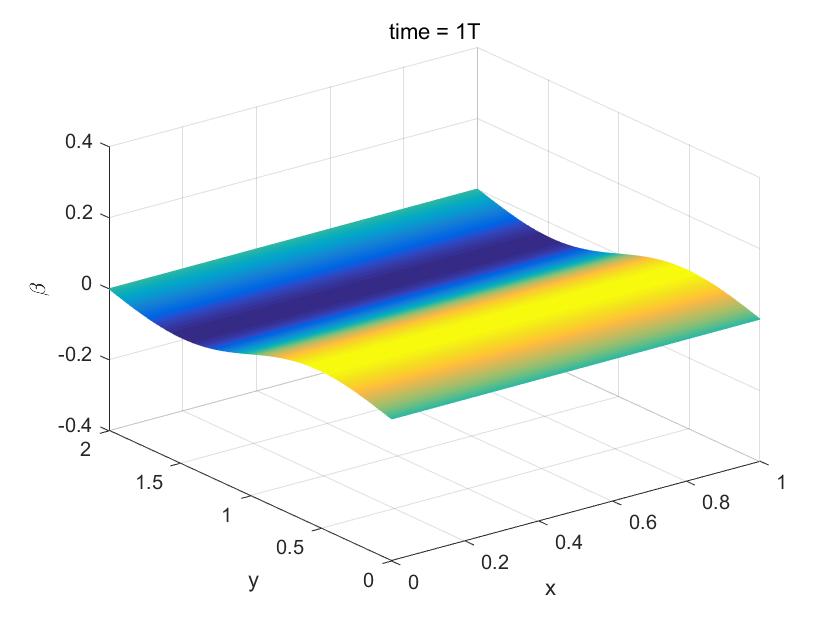}
	}
	\caption{The simulation process at time $t=0,1/4 T, 1/2 T,3/4 T, T$ (from left to right and top to bottom) produced by the EIFG3 scheme for Example \ref{ex5}.}
	\label{fig 7}
\end{figure}

\subsection{3D Grain coarsening simulations}

We now illustrate the performance of the proposed EIFG method through numerical simulation of the 3D grain coarsening process. In particular, we only simulate EIFG2 scheme to obtain 3D phase structures for its solid theoretical support.
\begin{example}
	\label{ex4}
	We consider the grain coarsening process governed by the following 3D Allen-Cahn equation with Flory-Huggins potential function:
	\begin{equation*}
	u_t = \epsilon^2 \Delta u +\frac{\theta}{2}\ln{\frac{1-u}{1+u}}+\theta_c u, \quad (x,y,z)\in \Omega, \quad 0 \leq t \leq T,
	\end{equation*}
	where $\Omega=[0,1]^3$. The initial data is generated by random numbers on each mesh node ranging from -0.9 to 0.9, and the periodic boundary condition is imposed. This equation can be regarded as the $L^2$ gradient flow of the following energy function
	\begin{equation*}
	E(u)=\int_{\Omega}\frac{\theta}{2}\Big((1+u)\ln(1+u)+(1-u)\ln(1-u) \Big)-\frac{\theta_c}{2}u^2+\frac{\epsilon^2}{2}|\nabla u|^2 \dd \bm x
	\end{equation*}
	and thus the energy monotonically decays along the time.
\end{example}

Suppose the interface thickness coefficient $\epsilon = 0.1$ and Flory-Huggins potential parameters $\theta_c=1.6,\theta=0.8$. This problem satisfies the maximum bound principle with the maximum bound value $\gamma \approx 0.9575$, i.e., $|u(t,\bm x)|\leq \gamma$ for all $\bm x \in \Omega$ and $t \geq 0$ \cite{ChenJing2022,LiLi2021}. We run the simulation until $T=20$ with uniform refined spatial grids $N_x =N_y=N_z=128$ ($h=1/128$) and temporal partitions $N_T=8192$ (i.e., $\Delta t=T/N_T=5/2048$). Evolutions of the supremum norm and the energy of the numerical solutions produced by EIFG2 scheme are plotted in Fig \ref{fig 3}. We observe that the maximum bound principle is well preserved and the energy also decays monotonically along the time.

\begin{figure}[htbp]
	\centering
	\subfigure{
		\label{fig3-1}
		\centering
		\includegraphics[width = 120pt,height=120pt]{./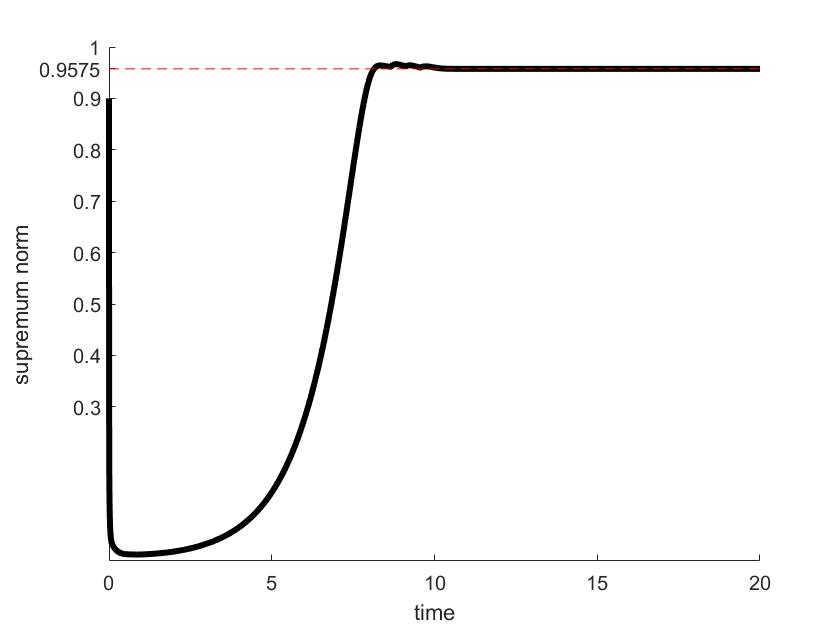}
	}
	\subfigure{
		\label{fig3-2}
		\centering
		\includegraphics[width = 120pt,height=120pt]{./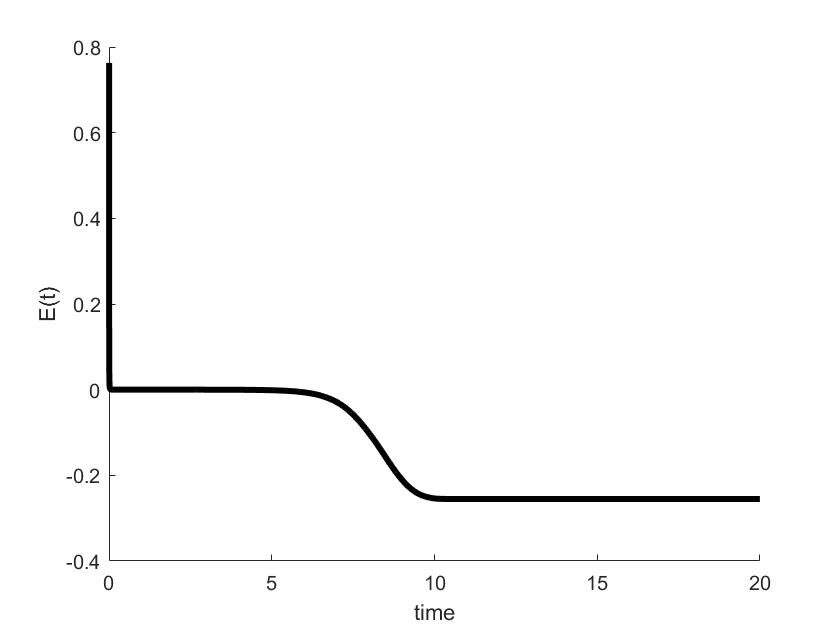}
	}
	\caption{The evolutions of supremum norm (left) and energy (right) of the numerical solution produced by EIFG2 scheme for Example \ref{ex4}.}
	\label{fig 3}
\end{figure}

\section{Conclusions}\label{conclusion}

In this paper, we develop an efficient numerical scheme for solving semilinear parabolic equations taking the form \eqref{eq1-1} in regular domains. The EIFG method is proposed for solving problems with periodic boundary conditions. The fully discrete solution is obtained by first applying Fourier-based (EIFG) Galerkin method for spatial discretization and then explicit exponential Runge-Kutta for temporal integration. We successfully derive optimal error estimates in the $H^2$-norm for EIFG method with two RK stages. Some numerical examples are also presented to illustrate the accuracy and high efficiency of EIFG method. In addition, the numerical method and corresponding error analysis framework developed in this paper also naturally enables us to further investigate the localized ETD methods with solid theoretical support \cite{LiJu2021,HoangJu2018}.

\bibliographystyle{siam}
\bibliography{ref}
\end{CJK}
\end{document}